\numberwithin{equation}{section}
\numberwithin{figure}{section}
\newtheorem {theo} {Theorem} [section]
\newtheorem {prop} [theo] {Proposition}
\newtheorem {defi} [theo] {Definition}
\newtheorem*{thmC}{Theorem}
\theoremstyle{definition}
\newtheorem{example}{Example}
\newtheorem {obs} [theo] {Remark}
\newcommand{\R}{\ensuremath{\mathbb{R}}}
\def\sideremark#1{\ifvmode\leavevmode\fi\vadjust{\vbox to0pt{\vss 
    \hbox to 0pt{\hskip\hsize\hskip1em           
 \vbox{\hsize2cm\tiny\raggedright\pretolerance10000
 \noindent #1\hfill}\hss}\vbox to8pt{\vfil}\vss}}}%
\subjclass[2010]{37C05, 34C07, 30B10, 30B12, 39A06, 34E05, 37C10, 37C15}
\keywords{Dulac germ, Fatou coordinate, embedding in a flow, asymptotic expansion, transseries}
\newcommand{\xyL}[1]{%
	\xydef@\xymatrixrowsep@{#1}
} 
\newcommand{\xyC}[1]{%
	\xydef@\xymatrixcolsep@{#1}
} 
\begin{document}

\title{The Fatou coordinate for parabolic Dulac germs}

\author{P. Marde\v si\'c$^{1}$, M. Resman$^{2}$, J.-P. Rolin$^{3}$,
V. \v Zupanovi\'c$^{4}$}
\begin{abstract}

We study the class of parabolic Dulac germs of hyperbolic polycycles. For such germs we give a constructive proof of the existence of a \emph{unique} Fatou coordinate, admitting an asymptotic expansion in the power-iterated logarithm monomials. 

\end{abstract}
\maketitle
\noindent \emph{Acknowledgement}. This research was supported by:
Croatian Science Foundation (HRZZ) project no. 2285, French ANR project
STAAVF, French-Croatian bilateral Cogito project 33003TJ \emph{Classification
de points fixes et de singularit\'es \`a l'aide d'epsilon-voisinages
d'orbites et de courbes}, Croatian UKF project \emph{Classifications of Dulac maps and epsilon-neighborhoods 2018, the University of Zagreb research support
for 2015 and 2016. }

\section{\label{sec:introduction}Introduction}
A classical
problem  for germs  of diffeomorphisms on the real or complex line  at a fixed point  is the problem of embedding in the flow of vector fields or, equivalently, the construction of a Fatou coordinate in which the diffeomorphisms is the translation by one. 

The problem is solved in most cases in the analytic setting. Denote by $f$ the analytic germ of a diffeomorphism, and by $\lambda$ the multiplier of the fixed point. The problem is fully solved in the following two cases:  in the hyperbolic case
 ($|\lambda|\ne1$) by K\"{o}nig's and B\"{o}tcher's linearization theorem; in the parabolic, i.e. tangent-to-identity, case ($\lambda=1$, to which also the case of $\lambda$ being a root of unity is reduced) by the Leau-Fatou theorem, see e.g. \cite{milnor}.

We solve the problem of existence and uniqueness of Fatou coordinates in a non-analytic case:
for parabolic Dulac germs. 
Dulac germs are germs $f$ on $(\R^+,0)$ analytic outside of zero, having the well-known \emph{Dulac} power-log asymptotic expansion $\hat{f}$ and which are moreover \emph{quasi-analytic}, i.e. which admit an extension to a complex domain ensuring the injectivity of the mapping $f\mapsto \hat{f}$  (see Definition~\ref{def:dulac_germ}). In fact, we build a Fatou coordinate which admits a \emph{transasymptotic expansion} in power-logarithm monomials (see Section \ref{subsec:tP}). This guarantees its uniqueness.

Our work on Dulac germs is motivated by 
the study of cyclicity of polycycles of analytic vector fields. 
Indeed, it is proved by Ilyashenko \cite{ilyalim} that the first-return maps of hyperbolic polycycles are Dulac germs in the sense of Definition~\ref{def:dulac_germ}.
The class of parabolic Dulac germs is the most interesting case in the study of the \emph{cyclicity}
 of  polycycles. 
 Recall that the cyclicity is the maximal number of limit cycles which can appear in a neighborhood of the polycycle in an analytic deformation. Cycles are given by zeros of the \emph{displacement germ} $\Delta(x)$, that is the Dulac germ minus the identity. In the case of a non-parabolic Dulac germ, the displacement germ is of the form $\Delta(x)=ax+o(x)$, with $a\ne0$ \cite{cherkas}. In the parabolic case the displacement function is flatter than a function having a linear term, so one should expect higher cyclicity. This is indeed a theorem in the case of \emph{hyperbolic loops} (\emph{i.e.} polycycles with one vertex): there exist parabolic Dulac germs corresponding to homoclinic loops of arbitrary high cyclicity \cite{roussarie_number}, whereas in the non-parabolic case the cyclicity is one \cite{cherkas}. In \cite{mourtada}, universal bounds for cyclicity of hyperbolic polycycles of $2$, $3$ and $4$ vertices are given under some generic conditions. They imply that the corresponding Dulac germ is non-parabolic. The cyclicity problem in the parabolic case, even for polycycles with such low number of vertices, is widely open.\\

The problem of embedding of diffeomorphisms in a flow has been studied previously in various non-analytic contexts.  For a tangent-to-identity
$\mathcal{C}^{\infty}$ germ $f$ with a \emph{non-flat contact} with
the identity, the embedding of $f$ in the flow of a smooth vector
field is proved by Takens \cite[Theorem~4]{takens:normal_forms}. Recall that a germ $f$ has a \emph{flat contact with the identity} if $f(x)-x$ tends to $0$ faster than any power of $x$ as $x\to0$.
The case of a parabolic $\mathcal{C}^{\infty}$ germ with a \emph{flat
	contact} with the identity and 0 as an isolated fixed point, and an
extra non-oscillation condition, is considered by Sergeraert \cite[Theorem~3.1]{serge}.
This result can be applied, for example, in the study of conjugation
classes of some $\mathcal{C}^{\infty}$ codimension one foliations
of the cylinder $\mathbb{S}^{1}\times\mathbb{R}$. Finally, if $f$
is a germ of $\mathcal{C}^{r}$ diffeomorphism of $\mathbb{R}^+$,
$r\ge2$, with $0$ as an isolated fixed point, then a classical result
of Szekeres says that $f$ embeds in the flow of a $\mathcal{C}^{1}$
vector field, which is $\mathcal{C}^{r-1}$ outside $0$ \cite{szekeres:regular_iteration}.\\

Note that none of these results applies as such in our framework.
First, a parabolic Dulac germ $f$, while having analytic representatives
on open intervals $\left(0,d\right)$, is not in general $\mathcal{C}^{\infty}$
at $0$. Moreover, thanks to the \emph{quasi-analyticity} result of
\cite{ilyalim}, if $f$ is not equal to the identity, then
its Dulac asymptotic expansion is different from $x$. Hence,
unlike the situation considered by Sergeraert, $f$ does not have a flat
contact with the identity. Finally, Szekeres' result could be applied
to Dulac germs of class $\mathcal{C}^{2}$ at $0$. However, Szekeres' methods do not lead to an asymptotic expansion of the Fatou coordinate in the power-iterated log scale. 

In our study, the main difficulty consists in giving a meaning to the notion of transasymptotic expansion. In the classical case of Dulac germs, the problem does not occur as, in the expansions, each power of $x$ is multiplied by a \emph{polynomial}
function in $\log(x)$. However, this is not the case anymore for the Fatou coordinate \cite{mrrz2}. In order to build a Fatou coordinate, we perform a transfinite version of the classical Poincar\'e algorithm. In each step, we solve the same Abel equation on the formal and on the germ level. To this end, each time a power of $x$ is multiplied in the expansion by an infinite series, we choose an appropriate representative of this series, consistently with the Abel equation. We call \emph{integral section} this suitable choice of a representative  (see Definitions \ref{sectional} and \ref{defi}).\\

The problem of the choice of a germ represented by an infinite series is a key problem in the study of analytic dynamical systems. The problem appears in Ilyashenko's solution of Dulac's problem. The solution  is given by imposing the existence of an analytic
extension of the germ to a sufficiently big complex domain (quasi-analyticity), \cite{ilyalim}. In our construction of the Fatou coordinate,
 the successive choices are done by imposing to the germs that appear in the process to be solutions of Abel equations. 
\smallskip 

\paragraph{\bf{Perspectives}}
Analytic classification of parabolic analytic germs was given by Ecalle-Voronin moduli (see \cite{ecalle, voronin} or \cite{loray} for an overview). It was given by comparison of Fatou coordinates on corresponding sectors in the complex plane. We want to extend Ecalle-Voronin moduli to Dulac germs. We hence need Fatou coordinates defined on sufficently large complex sectors. 
Our main result  is formulated on the real line (i.e. on $(0,d),\ d>0$). 
However, by Ilyashenko \cite{ilya}, the Dulac germ extends to a \emph{standard quadratic domain} in the complex domain. 
In this paper, we construct the Fatou coordinate in a small sector in the complex domain containing the interval $(0,d)$. Extension of the construction to maximal $f$-invariant domains should permit the description of  the dynamics of the complex Dulac germ on a standard quadratic domain. By comparison of Fatou coordinates on different sectors, we expect to obtain in the future the definition of Ecalle-Voronin moduli for Dulac germs. \\ 

An additional motivation for our study is to answer the following question: 
 \emph{Can we \emph{recognize} a Dulac germ by looking at the size of $\varepsilon$-neighborhoods of its orbits?} A similar question, but for analytic germs, was discussed in \cite{resman} and \cite{nonlin}. The embedding in a flow of a Dulac germ $f$ will be used in \cite{MRRZ2fractal} to define an appropriate generalization of the length of the $\varepsilon$-neighborhood of an orbit of $f$.  The idea is to read the formal class and the Ecalle-Voronin moduli of a Dulac germ  in the \emph{parameter $\varepsilon$-space}, rather than in the phase space.\\

\paragraph{\bf{Organization of the paper}}

In Section 2 we formulate our main theorem about the existence and the uniqueness of the Fatou coordinate for a Dulac germ $f$ and its Dulac expansion $\widehat f$ respectively.

In Section \ref{sec:asymptotic_expansions},
we show the non-uniqueness of  transasymptotic expansions in the power-iterated log scale in general. 
In order to remedy this flaw, we introduce the notion of \emph{sectional asymptotic expansions} and define a particular type of such expansions adapted to  Dulac germs and their corresponding Fatou coordinates which will ensure their uniqueness.
The proof of the results of this section is given in Section \ref{sec:appendix}.

We recall in Section \ref{sec:embedding}
the classical notion of \emph{embedding as the time-one map
of a flow} and state the equivalence between the existence of a Fatou coordinate
and an embedding in a flow, for analytic germs on open intervals and for parabolic transseries. The proof of these facts is also postponed to Section \ref{sec:appendix}.

In Section~\ref{sec:examples} we give some examples of sectional asymptotic expansions.

Finally, Section \ref{sec:Fatou} is dedicated to the precise description of the Fatou coordinate of a Dulac germ and to the proof of the Theorem. The existence of a Fatou coordinate is established simultaneously for a Dulac germ and its formal expansion. This allows in particular to prove that the Fatou coordinate of a Dulac germ admits a sectional asymptotic expansion in a power-iterated log scale, in the sense of Section \ref{sec:asymptotic_expansions}. It is worth noticing that the proofs in Section~\ref{sec:Fatou} rely on the particular form of Dulac transseries. Part of the proof of the existence of the Fatou coordinate for Dulac germs in Section~\ref{sec:Fatou} is inspired by a similar classical result for parabolic analytic germs
which is explained, for example, in \cite{loray2}.

\section{Main definitions and results}

We define here the classes of \emph{power-iterated log transseries}, and recall the notions of \emph{Dulac germs} and \emph{Fatou coordinate}, which  are needed to state the Theorem about the Fatou coordinate of a Dulac germ. \\

We first introduce several classes of transseries.  We put $\boldsymbol{\ell}_{0}:=x$, $\boldsymbol{\ell}:=\boldsymbol\ell_1:=\frac{1}{-\log x}$, 
and define inductively $\boldsymbol{\ell}_{j+1}=\boldsymbol{\ell}\circ\boldsymbol{\ell}_{j}$,
$j\in\mathbb{N}$, as symbols for iterated logarithms. 

\begin{defi}[The classes $\widehat {\mathcal L}_j^\infty$ and $\widehat{\mathfrak L}$]\label{def:eljot}
Denote by $\widehat{\mathcal{L}}_{j}^{\infty}$, $j\in\mathbb N_0$, the set of all transseries of the
type: 
\begin{equation}\label{summable}
\widehat f(x)=
\sum_{i_{0}=0}^{\infty}\sum_{i_{1}=0}^{\infty}\cdots\sum_{i_{j}=0}^{\infty}a_{i_{0},\ldots, i_{j}}x^{\alpha_{i_{0}}}\boldsymbol{\ell}^{\alpha_{i_{0},i_{1}}}\cdots\boldsymbol{\ell}_{j}^{\alpha_{i_{0},\ldots,i_{j}}},\  a_{i_0,\ldots, i_j}\in\mathbb R,\ x>0,
\end{equation}
where $\left(\alpha_{i_{0},\ldots, i_{k}}\right)_{i_{k}\in\mathbb{N}}$
is a strictly increasing sequence of real numbers tending to $+\infty$ $($or finite$)$,
for
every $k=0,\ldots,j$. If, moreover, $\alpha_{0}> 0$ $($the infinitesimal cases$)$, we denote the class by $\widehat{\mathcal L}_j$. The subset of $\widehat{\mathcal L}_1$ resp. $\widehat{\mathcal L}_1^\infty$ of transseries with only integer powers of $\boldsymbol\ell$ will be denoted by $\widehat{\mathcal L}$ resp. $\widehat{\mathcal L}^\infty$. 

\medskip
A \emph{monomial} in $\widehat{\mathcal L}_j^\infty$, $j\in\mathbb N_0,$
is any term of the form $a x^{\gamma_0}\boldsymbol{\ell}^{\gamma_1}\cdots\boldsymbol{\ell}_{j}^{\gamma_j}$, $\gamma_i\in\mathbb R$, $i\in\{0,\ldots,j\}$, 
and $a\in\mathbb{R}\setminus\{0\}$.
\medskip

 A transseries $x^{\alpha_{i_{0}}} \big(\sum_{i_{1}=0}^{\infty}\cdots\sum_{i_{j}=0}^{\infty}a_{i_{0},\ldots, i_{j}}\boldsymbol{\ell}^{\alpha_{i_{0},i_{1}}}\cdots\boldsymbol{\ell}_{j}^{\alpha_{i_{0},\ldots, i_{j}}}\big)$, $i_0\in\mathbb N_0$, from \eqref{summable} is called a \emph{block} of $\widehat f\in\widehat {\mathfrak L}$. That is, a block of $\widehat f\in\widehat {\mathfrak L}$ is a transseries containing all monomials from $\widehat f$ sharing the same power of $x$.
\medskip

For $\widehat f\in\widehat{\mathcal L}_j^\infty$, \emph{the support of $\widehat f$}, denoted by $\mathrm{Supp}(\widehat f)$, is defined as the set of exponents of all monomials in $\widehat f$ with non-zero coefficients:
$$
\mathrm{Supp}(\widehat f):=\big\{(\alpha_{i_0},\alpha_{i_0, i_1},\ldots,\alpha_{i_0 ,i_1,\ldots, i_j}):\ a_{i_0,\ldots, i_j}\neq 0\big\}.
$$

Put
$$
\widehat{\mathfrak L}:=\bigcup_{j\in\mathbb N_0} \widehat{\mathcal L}_j^\infty
$$
for the class of all power-iterated logarithm transseries of finite depth in iterated logarithms.

\end{defi}

Note that the classes $\widehat{\mathcal{L}}_{j}^{\infty}$, $j\in\mathbb{N}_{0}$, are the sub-classes of power-iterated logarithm transseries, whose support
is any well-ordered subset of $\mathbb{R}^{j+1}$ (for the lexicographic
order). We restrict only to the subclass with exponents forming a strictly increasing sequence tending to $+\infty$. In this paper, we work with Dulac germs and their expansions, for which 
this condition is verified. 

Notice that $x=\boldsymbol{\ell}_{0}$. The classes $\widehat{\mathcal{L}}_{0}$ or $\widehat{\mathcal{L}}_{0}^{\infty}$
are made of formal power series: 
\[
\widehat{f}(x)=\sum_{i\in\mathbb N}a_{i}x^{\alpha_i},\ a_{i}\in\mathbb{R},\ x>0,
\]
such that $(\alpha_i)_{i}$ is a strictly increasing real sequence tending to $+\infty$. \\

For $\widehat f\in \widehat{\mathcal L}_j^\infty$, we denote by $\mathrm{Lt}(\widehat f)$ its \emph{leading term},
 which is defined as the smallest term $a_{\gamma_{0},\gamma_{1},\ldots,\gamma_{j}}\, x^{\gamma_{0}}\boldsymbol{\ell}^{\gamma_{1}}\boldsymbol{\ell}_{2}^{\gamma_{2}}\cdots\boldsymbol{\ell}_{j}^{\gamma_{j}}$ in $\widehat f$ (by the lexicographic order on the monomials) with a non-zero coefficient $a_{\gamma_{0},\gamma_{1},\ldots,\gamma_{j}}\neq 0$. The tuple $(\gamma_0,\gamma_{1},\ldots,\gamma_{j})$ is called \emph{the
order} of $\widehat f$, and is denoted by $\text{ord}(\widehat f)=(\gamma_0,\gamma_{1},\ldots,\gamma_{j})$. The transseries $\widehat f\in\widehat{\mathcal{L}}_{j}$, $j\in\mathbb N_0$, is called \emph{parabolic} if $\text{ord}(\widehat f)=(1,0,\ldots,0).$\\

We denote by $\mathcal G$ the set of all germs at $0$ of real functions
defined on some open interval $(0,\varepsilon)$, $\varepsilon>0$ (meaning that two functions $f$ and $g$ define the same germ if there exists an interval $(0,\varepsilon)$, $\varepsilon>0$, where they coincide). Furthermore, by $\mathcal G_{AN}\subset \mathcal{G}$ we denote the set of all germs at $0$ of real functions defined and \emph{analytic} on some interval $(0,\varepsilon),\ \varepsilon>0$.
\smallskip

In the paper, we will use notation $o(\cdot)$ in two different contexts: for germs from $\mathcal G$ and for transseries from $\widehat{\mathfrak L}$. We always mean that $x\to 0$. In the case of germs, $f(x)=o(x^{\gamma_0}\boldsymbol\ell^{\gamma_1}\cdots\boldsymbol\ell_k^{\gamma_k})$ means that $\lim_{x\to 0}\frac{f(x)}{x^{\gamma_0}\boldsymbol\ell^{\gamma_1}\cdots\boldsymbol\ell_k^{\gamma_k}}=0$. In the case of transseries, $\widehat f(x)=o(x^{\gamma_0}\boldsymbol\ell^{\gamma_1}\cdots\boldsymbol\ell_k^{\gamma_k})$ means that the leading monomial of $\widehat f$ is lexicographically of strictly bigger order than the monomial $x^ {\gamma_0}\boldsymbol\ell^{\gamma_1}\cdots\boldsymbol\ell_k^{\gamma_k}$.
\medskip

\begin{defi}\label{def:fgs} 
Let $(\alpha_i)_i$ be a sequence of strictly positive real numbers. We say that $(\alpha_i)_i$ is an \emph{increasing sequence of finite type} if it is of one of the following types:
\begin{enumerate}
\item finite and strictly increasing with $i$, or

\item infinite, strictly increasing as $i\to \infty$ and \emph{finitely generated} (there exist strictly positive real generators $\beta_1,\ldots,\beta_n$, such that for every $\alpha_i$ there exist $a_1^i,\ldots,a_n^i\in\mathbb N$ such that $\alpha_i=\sum_{j=1}^{n}a_j^i \beta_j$).
\end{enumerate}
Note that in the case $(2)$ it necessarily follows that $\alpha_i\to\infty$, as $i\to\infty$.
\end{defi}

We now recall the definition of a \emph{Dulac series} from \cite{Dulac}, \cite{ilya} or \cite{roussarie}, and define what we mean by a \emph{Dulac germ}.

\begin{defi}[Dulac germs]\label{def:dulac_germ} \
	
	1. We say that $\widehat{f}\in\widehat{\mathcal{L}}$ is a \emph{Dulac
		series} $($\cite{Dulac}, \cite{ilya}, \cite{roussarie}$)$ if it is of the form: 
	\begin{equation}\label{eq:dulacc}
	\widehat{f}=\sum_{i=1}^{\infty}P_{i}(\boldsymbol{\ell})x^{\alpha_{i}},
	\end{equation}
	where $(P_{i})_i$ is a sequence of polynomials and $(\alpha_{i})_{i}$ an  increasing sequence of finite type (see Definition~\ref{def:fgs}).
	
	2. We say that $f\in \mathcal G_{AN}$ is a \emph{Dulac germ} if:
	\begin{itemize}
		\item there exists a sequence $\left(P_{i}\right)$ of \emph{polynomials} and  an increasing sequence of finite type $(\alpha_{i})_{i}$ (see Definition~\ref{def:fgs}), such that
		$$
		f-\sum_{i=1}^{n} P_i(\boldsymbol\ell) x^{\alpha_i}=o(x^{\alpha_{n}}),\ n\in\mathbb N,
		$$
		\item $f$ is \emph{quasi-analytic}: it can be extended to an analytic
		function to a standard quadratic domain in $\mathbb C$ with the same expansion \eqref{eq:dulacc}, as precisely defined by Ilyashenko, see \cite{ilya}, \cite{roussarie}.
	\end{itemize}
	If moreover $P_1\equiv1$, $\alpha_1=1$, and at least one of the polynomials $P_i$, $i>1$, is not zero, then $f$ is called a \emph{parabolic Dulac germ}. 
\end{defi}

The quasi-analyticity property ensures that a Dulac germ $f$ is uniquely determined by its Dulac asymptotic series $\widehat f$, see \cite{ilya}. 

Note that the germs of first return maps of hyperbolic polycycles of planar analytic vector fields are Dulac germs in the sense of  Definition \ref{def:dulac_germ}, see e.g. \cite{Dulac,ilya}.  

\smallskip
We recall finally what is the Fatou coordinate of a real germ:

\begin{defi}[Fatou coordinate]\label{def:ffatou}~

1. Let $f$ be an analytic germ on $(0,d)$, $d>0$. Let $(0,d)$ be invariant for $f$. We say that a strictly monotonic
analytic germ $\Psi$ on $(0,d)$ is a \emph{Fatou coordinate} for
$f$ if 
\begin{equation}
\Psi(f(x))-\Psi(x)=1,\ x\in(0,d).\label{eq:fat0}
\end{equation}

2. Let $\widehat{f}\in\widehat{\mathcal{L}}$ be parabolic. We say
that $\widehat{\Psi}\in\widehat{\mathfrak{L}}$ is a \emph{formal
Fatou coordinate} for $\widehat{f}$ if the following equation is
satisfied formally in $\widehat{\mathfrak{L}}$ : 
\begin{equation}
\widehat{\Psi}(\widehat{f})-\widehat{\Psi}=1.\label{eq:rfatou}
\end{equation}
\end{defi}
\noindent Classically, equation \eqref{eq:fat0} is  called the \emph{Abel equation} for $f$.

\bigskip

We now formulate the main result of this paper. The notion of \emph{sectional asymptotic expansions} is introduced in Section~\ref{sec:asymptotic_expansions} to ensure the uniqueness of transfinite asymptotic expansions in $\widehat{\mathfrak {L}}$. In particular, the notion of \emph{integral sections} is adapted to the Fatou coordinate.

The constructive proof of the Theorem and a more precise description of the Fatou coordinate for a Dulac germ is given in Section~\ref{sec:Fatou}.

\begin{thmC} Let $f\in \mathcal G_{AN}$ be a parabolic
Dulac germ and let $\widehat{f}\in\widehat{\mathcal{L}}$ be its Dulac
expansion. 

1. There exists a unique $($up to an additive constant$)$ formal Fatou
coordinate $\widehat{\Psi}$ for $\widehat{f}$ in $\widehat{\mathfrak L}$. It belongs to $\widehat{\mathcal L}_2^\infty.$

2. There exists a unique $($up to an additive constant$)$ Fatou coordinate
$\Psi\in \mathcal G_{AN}$ for the germ $f$ which admits a sectional asymptotic expansion with respect to an integral section
in the class $\widehat{\mathfrak L}$ $($in the sense of Definitions~\ref{def:sectional} and \ref{def:is}$)$.

3. Let $\mathbf s$ be a fixed integral section and  $\widehat \Psi\in\widehat {\mathcal L}_2^\infty$ the formal Fatou coordinate $($with a fixed choice of the additive constant$)$. Then there exists a choice of the additive constant in $\Psi\in\mathcal G_{AN}$ from 2. such that the formal Fatou coordinate $\widehat\Psi$ is the $($unique$)$ sectional asymptotic expansion of $\Psi\in\mathcal G_{AN}$ with respect to $\mathbf s$. The Fatou coordinate $\Psi$ is of the
form: 
\[
\Psi=\Psi_{\infty}+R,
\]
where $\Psi_{\infty}\to\infty$ and $R=o(1)$, as $x\to 0$.

4. $\widehat{\Psi}=\widehat{\Psi}_{\infty}+\widehat{R}$, where $\widehat{\Psi}_{\infty}\in\widehat{\mathcal{L}}_{2}^{\infty}$ is the sectional asymptotic expansion of $\Psi_{\infty}$ with respect to $\mathbf s$
and $\widehat{R}\in \widehat{\mathcal{L}}$ is the
sectional asymptotic expansion of $R$ with respect to $\mathbf s$.
\end{thmC}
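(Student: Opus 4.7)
The plan is to prove the four parts in the natural order: first the formal Fatou coordinate in $\widehat{\mathcal L}_2^\infty$, then its analytic realization, and finally the matching of the two via the sectional asymptotic expansion attached to the integral section $\mathbf s$.

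For Part 1, I would construct $\widehat\Psi$ by a transfinite Poincar\'e-type algorithm applied to the Abel equation $\widehat\Psi(\widehat f)-\widehat\Psi=1$. Writing $\widehat f=x+\widehat h$ with $\widehat h=o(x)$ a Dulac series whose leading monomial is some $cx^\alpha\boldsymbol\ell^k$ (with $\alpha\ge 1$, and $k>0$ when $\alpha=1$), the formal Taylor expansion
\[
\widehat\Psi(\widehat f)-\widehat\Psi=\sum_{m\ge1}\frac{1}{m!}\widehat\Psi^{(m)}\,\widehat h^{\,m}=1
\]
forces the leading term of $\widehat\Psi$ to be a formal antiderivative of $1/\widehat h$, hence a monomial in $x^{1-\alpha}\boldsymbol\ell^{-k}$ when $\alpha>1$ and a monomial involving $\boldsymbol\ell_2=-\log\boldsymbol\ell$ when $\alpha=1$. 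This is precisely why $\widehat\Psi$ lands in the larger class $\widehat{\mathcal L}_2^\infty$ rather than in $\widehat{\mathcal L}$. The recursive step is then routine: subtract the current partial sum, read off the leading block of the residual, and integrate. Two properties must be maintained by transfinite induction, namely that the support of the partial sums stays lexicographically well-ordered (which follows from the shape of Dulac series and the fact that the differentiation/composition step strictly raises the lexicographic order of the residual block) and that each infinite block at a fixed power of $x$ is finitely generated. Uniqueness up to an additive constant comes from pinpointing the first block where two candidates could disagree and using injectivity of the difference operator on that block.

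For Part 2, I would truncate $\widehat\Psi$ at a sufficiently high order to obtain an analytic germ $\Psi_N$ on some $(0,d)$, whose error $\varepsilon_N(x):=\Psi_N(f(x))-\Psi_N(x)-1$ is $O(x^M)$ for $M$ arbitrarily large; here quasi-analyticity of $f$ on the standard quadratic domain legitimizes termwise composition and the asymptotic comparison. The classical parabolic telescoping recipe then furnishes
\[
R_N(x):=-\sum_{n\ge 0}\varepsilon_N\bigl(f^n(x)\bigr),
\]
which converges uniformly on compact subsets of $(0,d)$ thanks to the expected power-law decay of $f^n(x)$. Setting $\Psi:=\Psi_N+R_N$ gives an analytic Fatou coordinate that is independent of $N$ up to an additive constant. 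To secure the sectional asymptotic expansion property of Part 2, I would arrange that, whenever an infinite $\boldsymbol\ell$- or $\boldsymbol\ell_2$-block arises in the formal construction, its analytic representative is taken to be the integral section dictated by the Abel equation; this canonical choice makes $\Psi_N$ and $R_N$ intrinsic and transports $\widehat\Psi$ into the sectional asymptotic expansion of $\Psi$ with respect to $\mathbf s$. Uniqueness of such a $\Psi$ follows because any two such Fatou coordinates differ by an $f$-invariant germ admitting a sectional expansion, and the only $\widehat f$-invariant transseries in $\widehat{\mathfrak L}$ is a constant.

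Parts 3 and 4 will then fall out of the same construction: the obvious split of $\widehat\Psi$ into the blocks of strictly negative weight in $x$ (together with the logarithmic divergent block when $\alpha=1$), gathered into $\widehat\Psi_\infty\in\widehat{\mathcal L}_2^\infty$, and the infinitesimal blocks, gathered into $\widehat R\in\widehat{\mathcal L}$, mirrors exactly the analytic decomposition $\Psi=\Psi_\infty+R$ with $R=o(1)$, because truncating at order just past $x^0$ isolates the divergent part. The main obstacle throughout is that, unlike classical Dulac series, the blocks of $\widehat\Psi$ at a fixed power of $x$ are genuine infinite series in $\boldsymbol\ell$ (and even in $\boldsymbol\ell_2$), so they do not by themselves determine an analytic germ; the whole construction hinges on proving that the integral section exists, behaves well under composition with $f$, and is actually the germ produced by the telescoping correction $R_N$. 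This is where I expect the delicate quantitative orbit estimates on the standard quadratic domain to be unavoidable, and where the bulk of the technical work will lie.
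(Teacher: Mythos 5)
Your overall architecture matches the paper's: a block-by-block (equivalently, transfinite Poincar\'e) solution of the formal Abel equation for Part 1, a truncation-plus-telescoping correction for Part 2, and the split into finitely many divergent blocks versus an infinitesimal remainder for Parts 3--4. Two points, however, need repair.

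First, your uniqueness argument for Part 2 has a genuine gap. You argue that two Fatou coordinates in $\mathcal G_{AN}$ with integral sectional expansions differ by an $f$-invariant germ $g$, and that the only $\widehat f$-invariant transseries is a constant. That gives only that the \emph{sectional expansion} of $g$ is a constant $C$; it does not give $g=C$, because the sectional-expansion map is far from injective (a germ vanishing faster than every monomial in $\widehat{\mathfrak L}$ has expansion $0$). Moreover, for sections that are merely sectional (not integral) the paper exhibits explicit non-uniqueness of $\Psi$ via a periodic perturbation $\Psi+\sin(2\pi\Psi)$, so the integral structure of the blocks must enter the argument in an essential way. The paper's route is more direct: it uses the fact that for an integral section each block of $g$ is literally an integral $\int_*^x s^{\beta-1}R(\boldsymbol\ell(s))\,ds$ with $R$ a rational function, so if $g$ is not constant its leading block is nontrivial; applying the Taylor formula to $g(f(x))-g(x)=0$ then forces $g'(\xi_x)=0$ while $g'(\xi_x)\sim x^{\beta}R(\boldsymbol\ell)\neq 0$, a contradiction. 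You need this (or an equivalent) argument, not the purely formal one.

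Second, the split $\Psi=\Psi_\infty+R$ with $\Psi_\infty\to\infty$ and $R=o(1)$ silently requires that the block-by-block algorithm produces only \emph{finitely many} divergent blocks before switching to infinitesimal ones. This does hold, but it is not automatic: the paper proves it by tracking that the orders (in $x$) of successive right-hand sides stay in a finitely generated lattice and strictly increase, so after finitely many steps the order exceeds $\alpha_1-1$ and all subsequent $\Psi_k$ are $o(1)$. You should include this support-control lemma; it is also what guarantees that $\widehat R$ lives in $\widehat{\mathcal L}$ (no iterated logarithms beyond $\boldsymbol\ell$) while the single $\boldsymbol\ell_2$ monomial, if present, lands in $\widehat\Psi_\infty$.
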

Note that different choices of integral sections $\mathbf s$ in \emph{3.} lead to change $\Psi$ only by an additive constant $C\in\mathbb R$.
\smallskip

Note that $\Psi_{\infty}\to\infty$, $x\to0,$ is the \emph{infinite}
part, and $R=o(1)$, $x\to0$, is the \emph{infinitesimal} part. We
call $\Psi_{\infty}$ the \emph{principal part} of $\Psi$.
\medskip
\begin{obs}\label{cor:efatou} Let $f\in\mathcal G_{AN}$ be a parabolic Dulac germ. Let 
$\mathrm{ord}(\mathrm{id}-f)=(\alpha_{1},m)$, $m\in\mathbb{N}_{0}^{-}$, $\alpha_1>1$.
The function $R$ from the Theorem satisfies the
\emph{modified Abel} difference equation: 
\begin{equation}
R(f(x))-R(x)=\delta(x).\label{eq:modi}
\end{equation}
Here, $\delta$ is analytic on an open interval  $(0,d)$, $d>0$, and \emph{small}: $\delta(x)=O(x^{\gamma})$,
with $\gamma>\alpha_{1}-1$. \end{obs}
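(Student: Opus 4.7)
The plan is to substitute the decomposition $\Psi = \Psi_\infty + R$ furnished by the Theorem into the Abel equation $\Psi(f(x)) - \Psi(x) = 1$, which rearranges immediately to
\[
R(f(x)) - R(x) \;=\; 1 - \bigl(\Psi_\infty(f(x)) - \Psi_\infty(x)\bigr) \;=:\; \delta(x).
\]
Both $\Psi_\infty$ and $R$ are analytic on $(0,d)$ by the construction in the proof of the Theorem (each is assembled from analytic representatives of its blocks), and $f$ leaves $(0,d)$ invariant, so $\delta$ is analytic on $(0,d)$. The only non-trivial content of the Remark is thus the size estimate $\delta(x) = O(x^\gamma)$ for some $\gamma > \alpha_1 - 1$.

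For this I would argue first at the formal level. By parts 3 and 4 of the Theorem, $\widehat R$ is the sectional asymptotic expansion of $R$ with respect to $\mathbf s$. Since $R = o(1)$, every monomial of $\widehat R$ has strictly positive $x$-exponent, so its leading monomial has the form $a\, x^{\beta_0}\boldsymbol{\ell}^{\beta_1}$ with $\beta_0 > 0$. Plugging the decomposition $\widehat\Psi = \widehat\Psi_\infty + \widehat R$ into the formal Abel identity $\widehat\Psi(\widehat f) - \widehat\Psi = 1$, one sees that $\delta$ admits as sectional asymptotic expansion
\[
\widehat\delta \;=\; \widehat R(\widehat f) - \widehat R.
\]
A direct computation --- a Taylor-type expansion of $\widehat R$ along $\widehat f - x$, using that differentiating a power-log monomial lowers its $x$-exponent by one --- combined with $\mathrm{ord}(\mathrm{id} - \widehat f) = (\alpha_1, m)$ shows that the leading monomial of $\widehat\delta$ has order $(\beta_0 + \alpha_1 - 1,\, \beta_1 + m)$. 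Because $\beta_0 > 0$, its $x$-exponent strictly exceeds $\alpha_1 - 1$.

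The remaining --- and, I expect, the only delicate --- step is to transfer this formal order bound into a genuine estimate on the germ $\delta$. This is exactly what sectional asymptotic expansions are designed to provide: applying the partial-sum error bound from Section~\ref{sec:asymptotic_expansions} to the empty initial partial sum of $\widehat\delta$ yields $|\delta(x)| \le C\, x^{\beta_0 + \alpha_1 - 1}\boldsymbol{\ell}^{\beta_1 + m}$ on a right neighborhood of $0$, and the residual logarithmic factor can be absorbed into any $\gamma \in (\alpha_1 - 1,\, \beta_0 + \alpha_1 - 1)$ to conclude $\delta(x) = O(x^\gamma)$. The principal obstacle I anticipate is not conceptual but book-keeping: one must carefully track the sign of the integer $m$ and the behavior of the section $\mathbf s$ under differentiation and composition, in order to be sure that the single monomial $a\beta_0\, x^{\beta_0 + \alpha_1 - 1}\boldsymbol{\ell}^{\beta_1 + m}$ really is lexicographically the smallest term of $\widehat\delta$, and hence really does control $\delta$ on the germ side.
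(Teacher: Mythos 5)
Your decomposition $\delta(x) = 1 - \bigl(\Psi_\infty(f(x)) - \Psi_\infty(x)\bigr)$ and the observation that it is analytic on $(0,d)$ are exactly what the paper does in Subsections~\ref{sec:subzero}--\ref{sec:subtwo}; the Remark is not proved separately there but emerges from the block-by-block construction of $\Psi$. The computation of the leading monomial of $\widehat\delta = \widehat R(\widehat f) - \widehat R$ as $\bigl(\beta_0 + \alpha_1 - 1,\ \beta_1 + m\bigr)$ is also fine. However, the pivotal sentence ``Since $R = o(1)$, every monomial of $\widehat R$ has strictly positive $x$-exponent, so its leading monomial has $\beta_0 > 0$'' contains a genuine gap: $R = o(1)$ does \emph{not} imply positive $x$-exponent. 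For instance $\boldsymbol\ell = 1/(-\log x)$ is $o(1)$ as $x\to0$, yet as a monomial $x^0\boldsymbol\ell^1$ its $x$-exponent is zero. If $\beta_0 = 0$ were possible, the leading $x$-power of $\widehat\delta$ would be exactly $\alpha_1 - 1$ and $\delta$ would fail to be $O(x^\gamma)$ for any $\gamma > \alpha_1 - 1$ (a term $x^{\alpha_1-1}\boldsymbol\ell^{p}$, $p>0$, is $o(x^{\alpha_1-1})$ but not $O(x^\gamma)$ with $\gamma > \alpha_1-1$). So the estimate you want cannot be extracted from the qualitative fact $R = o(1)$ alone.

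What actually secures $\beta_0 > 0$ is the structure of the construction, not the $o(1)$ property of $R$. In the proof of the Theorem, Proposition~\ref{prop:aux} shows that each block $\widehat\Psi_k$ has $x$-power $\beta_{k-1} - (\alpha_1 - 1)$, where $\beta_{k-1}$ is the $x$-power of the leading block of the current right-hand side $\widehat\delta_{k-1}$; once $\beta_{k-1} > \alpha_1 - 1$, that block lies in $\widehat{\mathcal L}$, hence (by Definition~\ref{def:eljot}) has strictly positive $x$-power. The critical index $r_0$ is chosen precisely so that all remaining blocks satisfy this, whence $\widehat R = \sum_{i\ge1}\widehat R_{r_0+i}$ has leading $x$-power $\beta_0 > 0$ (equivalently, you may simply cite part~4 of the Theorem, which asserts $\widehat R\in\widehat{\mathcal L}$). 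Moreover, to pass from ``the $x$-power of $\widehat\delta$ exceeds $\alpha_1-1$'' to ``$\delta = O(x^\gamma)$ for some $\gamma$ strictly larger than $\alpha_1 - 1$,'' the paper uses the fact that the set $\mathcal{R}$ of possible $x$-powers of the successive right-hand sides is a finitely generated, hence discrete, lattice: once past $\alpha_1-1$, the $x$-power jumps by a definite positive margin. Both these structural facts --- positivity of $\beta_0$ via Proposition~\ref{prop:aux}, and discreteness of $\mathcal{R}$ --- are the ingredients your proof needs to cite to close the gap; otherwise the argument is on the right track.
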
 

\begin{obs}[\emph{Non-uniqueness} of the Fatou coordinate in $\mathcal G_{AN}$, if one does not require the existence of its asymptotic expansion in $\widehat{\mathfrak L}$]\ 

Note that any strictly monotonic $\Psi\in\mathcal G_{AN}$ whose inverse $\Psi^{-1}$, as a germ at infinity, satisfies $\Psi^{-1}(w+1)=f(\Psi^{-1}(w))$, is a Fatou coordinate for $f$. This gives us freedom of choice of $\Psi^{-1}$ on the fundamental domain $[0,1)$ and the rule for its extension at the neighborhood of $\infty$, thus, non-unicity of a Fatou coordinate for $f$. 

In particular, let $\Psi_1\in\mathcal G_{AN}$ be the Fatou coordinate constructed in the proof of the Theorem that admits a sectional expansion $\widehat \Psi_1 \in \widehat{\mathcal L}_2^\infty$. Let $\Psi_2\in\mathcal G_{AN}$ be defined by 
$\Psi_2:=\Psi_1+T_1\circ \Psi_1, $
where $T_1$ is any periodic function on $\mathbb R$ of period $1$ whose derivative $T_1'$ is bounded in $(-1,1)$ (e.g. $T_1(x)=\frac{1}{4\pi}\sin (2\pi x),\ x\in\mathbb R$).
It can be easily checked that $\Psi_2$ is also a Fatou coordinate for $f$ (by Definition~\ref{def:ffatou}). It does not admit an expansion in $\widehat{\mathfrak L}$, due to periodicity of $T_1$.
\end{obs}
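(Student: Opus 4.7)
The plan is to verify three assertions contained in the remark: (i) any strictly monotonic analytic $\Psi$ whose inverse satisfies $\Psi^{-1}(w+1)=f(\Psi^{-1}(w))$ is a Fatou coordinate of $f$; (ii) the candidate $\Psi_2:=\Psi_1+T_1\circ\Psi_1$ is indeed a Fatou coordinate of $f$; and (iii) $\Psi_2$ admits no asymptotic expansion in $\widehat{\mathfrak L}$. Assertion (i) is immediate: applying $\Psi$ to both sides of the intertwining relation and substituting $x=\Psi^{-1}(w)$ gives $\Psi(f(x))=\Psi(x)+1$ on the image of $\Psi^{-1}$, which together with the monotonicity and analyticity of $\Psi$ matches Definition~\ref{def:ffatou}.

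For (ii), I would check the Abel equation directly:
\begin{align*}
\Psi_2(f(x))-\Psi_2(x) &= \bigl(\Psi_1(f(x))-\Psi_1(x)\bigr)+\bigl(T_1(\Psi_1(f(x)))-T_1(\Psi_1(x))\bigr)\\
&= 1+T_1(\Psi_1(x)+1)-T_1(\Psi_1(x))=1,
\end{align*}
where the first equality uses the Abel relation for $\Psi_1$ and the second uses the $1$-periodicity of $T_1$. Analyticity of $\Psi_2$ on $(0,d)$ is clear from that of $\Psi_1$ and $T_1$, and strict monotonicity follows from $\Psi_2'=\Psi_1'\cdot(1+T_1'\circ\Psi_1)$ together with the bound on $T_1'$ (for the sine example, $|T_1'|\le 1/2$, so the second factor is bounded away from zero and $\Psi_2'$ has the same sign as $\Psi_1'$).

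The heart of the remark, and the main obstacle, is (iii). The key observation is that $T_1\circ\Psi_1$ oscillates at $0$: by the Theorem, $\Psi_1=\Psi_\infty+R$ with $\Psi_\infty(x)\to\infty$ as $x\to 0^+$, so $\Psi_1(x)\to\infty$, and since $T_1$ is $1$-periodic and non-constant, $T_1(\Psi_1(x))$ cannot have any limit in $[-\infty,+\infty]$. Suppose for contradiction that $\Psi_2$ admits a sectional asymptotic expansion $\widehat{\Psi_2}\in\widehat{\mathfrak L}$. By the Theorem, $\Psi_1$ possesses the asymptotic expansion $\widehat{\Psi_1}\in\widehat{\mathcal L}_2^\infty\subset\widehat{\mathfrak L}$, so the difference $T_1\circ\Psi_1=\Psi_2-\Psi_1$ would be asymptotically represented by $\widehat{\Psi_2}-\widehat{\Psi_1}\in\widehat{\mathfrak L}$. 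If this transseries is zero, then $T_1\circ\Psi_1=o(1)$, contradicting the non-zero amplitude of its oscillations; otherwise its leading monomial $a\,x^{\gamma_0}\boldsymbol\ell^{\gamma_1}\cdots\boldsymbol\ell_k^{\gamma_k}$ has a well-defined limit in $[-\infty,+\infty]$ to which $T_1(\Psi_1(x))$ would be asymptotically equivalent, again contradicting the oscillations. Formalising the underlying structural fact, namely that every element of $\widehat{\mathfrak L}$ imposes a definite growth or decay rate on any germ it represents in the sectional sense of Section~\ref{sec:asymptotic_expansions}, is where the bulk of the work lies; once this is clean, (iii) follows as above and the remark is complete.
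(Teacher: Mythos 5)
Parts (i) and (ii) of your plan are fine: the algebraic verification of the Abel equation for $\Psi_2$ via periodicity, and the monotonicity argument $\Psi_2' = \Psi_1'(1+T_1'\circ\Psi_1)$ with the bound $|T_1'|<1$, are exactly what the remark has in mind.

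The gap is in step (iii), and it is not where you locate it. You write that if $\Psi_2$ had an expansion $\widehat{\Psi}_2\in\widehat{\mathfrak L}$ then the difference $T_1\circ\Psi_1 = \Psi_2-\Psi_1$ ``would be asymptotically represented by $\widehat{\Psi}_2-\widehat{\Psi}_1$.'' This subtraction of expansions is precisely what the transfinite Poincar\'e algorithm does \emph{not} license, and the paper's own example after Definition~\ref{sectional} makes this explicit: the germs $f_1(x)=\tfrac{x}{1-\boldsymbol\ell}$ and $f_2(x)=\tfrac{x}{1-\boldsymbol\ell}+x^2\sin x$ both admit the expansion $x\sum_k\boldsymbol\ell^k$, yet their difference $x^2\sin x$ has no expansion in $\widehat{\mathfrak L}$ at all. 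So from ``$\Psi_1$ and $\Psi_2$ each admit some expansion'' one cannot conclude that $T_1\circ\Psi_1$ admits an expansion, let alone that it is $\widehat{\Psi}_2-\widehat{\Psi}_1$. The non-linearity of the summation choices at limit-ordinal steps (the $g_\alpha$'s in Section~\ref{subsec:tP}) is exactly what breaks this: a sum $g_\alpha$ chosen for $\Psi_2$ can absorb the bounded oscillation $T_1\circ\Psi_1$ into itself whenever the monomials of $\widehat f_\alpha$ still tend to infinity, so the remainders $h_\alpha = \Psi_2 - g_\alpha$ need not carry the oscillation that your contradiction relies on. The ``structural fact'' you flag at the end (that a transseries imposes a definite growth rate on any germ it represents) is correct but is \emph{not} the missing piece; what is missing, and unavoidable, is a justification that the algorithm applied directly to $\Psi_2$, with its full freedom of choice at limit stages, must eventually produce a remainder with no leading term. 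Any correct argument must therefore track the algorithm on $\Psi_2$ itself rather than on the difference $\Psi_2-\Psi_1$.

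For reference, the paper offers no proof of this remark beyond the phrase ``due to periodicity of $T_1$,'' so the intended level of rigour is low. But a rigorous version of (iii) has to confront the non-additivity of $\mathcal S$ head on, which your argument currently sidesteps.
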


\begin{obs}\label{rem:non}
Note that the Fatou coordinate $\Psi\in\mathcal G_{AN}$ for Dulac germ $f$ in the Theorem admitting only a \emph{sectional} asymptotic expansion in $\widehat{\mathfrak L}$ is \emph{not} unique, as we will show in Example~\ref{ex:add} in Subsection~\ref{sec:subthree}. On the other hand, we prove in Subsection~\ref{sec:subthree} that a Fatou coordinate  $\Psi\in\mathcal G_{AN}$ for $f$ admitting a sectional asymptotic expansion with respect to an \emph{integral sectional} in $\widehat{\mathfrak L}$ is unique.
\end{obs}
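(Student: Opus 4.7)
The remark asserts two complementary facts about uniqueness, and my overall strategy in both cases rests on examining the difference $\Phi := \Psi_2-\Psi_1$ of two candidate Fatou coordinates. Such a $\Phi \in \mathcal G_{AN}$ satisfies the homogeneous Abel equation $\Phi(f(x)) = \Phi(x)$ on $(0,d)$, i.e.\ $\Phi$ is $f$-invariant. The two claims then reduce to understanding the $f$-invariant analytic germs that admit, respectively, a general sectional expansion, or a sectional expansion with respect to an integral section.

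For the non-uniqueness statement, the plan is to exhibit (via the forward reference to Example~\ref{ex:add}) two Fatou coordinates $\Psi_1\neq\Psi_2$ of the same parabolic Dulac germ $f$, both admitting sectional asymptotic expansions in $\widehat{\mathfrak L}$, but with respect to different and crucially \emph{non-integral} sections. The flexibility is created by the fact that, once the block-wise Abel equation is not imposed, each infinite subseries occurring as a block $x^{\alpha_i} \sum_k a_{i,k}\boldsymbol{\ell}^{\beta_{i,k}}\cdots$ can be realized by infinitely many analytic germs differing by terms that can be absorbed in the error for subsequent blocks. Concretely, starting from the $\Psi_1$ produced by the Theorem, one modifies its representative on one particular block by an analytic germ $\eta$ whose Abel defect $\eta\circ f - \eta$ can be compensated by an analogous perturbation in a later block, producing a distinct Fatou coordinate $\Psi_2$ still admitting a sectional expansion (but w.r.t.\ a different, non-integral section).

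For the uniqueness statement, suppose $\Psi_1,\Psi_2$ both admit sectional expansions with respect to the same integral section $\mathbf{s}$. By linearity of the integral-section construction, the difference $\Phi$ again admits a sectional asymptotic expansion $\widehat\Phi\in\widehat{\mathfrak L}$ with respect to $\mathbf{s}$, and on the formal side $\widehat\Phi$ solves the homogeneous formal Abel equation $\widehat\Phi(\widehat f)-\widehat\Phi=0$. The next step is to show that, in $\widehat{\mathfrak L}$, the only such solutions are constants: writing $\widehat f - \mathrm{id} = a\,x^{\alpha_1}\boldsymbol\ell^{-m}+\cdots$ with $\alpha_1>1$, and expanding $\widehat\Phi(\widehat f)-\widehat\Phi$ by formal Taylor expansion, the leading contribution equals $(\widehat f-\mathrm{id})\,\widehat\Phi'$ to top order; if $\widehat\Phi$ had a non-constant leading term $c\,x^{\gamma_0}\boldsymbol\ell^{\gamma_1}\cdots$, this product would yield a non-zero leading monomial, contradicting the equation. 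Hence $\widehat\Phi\equiv C\in\mathbb R$.

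It then remains to pass from the formal conclusion to the germ, and this is where the integral-section hypothesis is essential. Having sectional expansion equal to the constant $C$ with respect to an integral section forces the germ-level inequality $\Phi(x)-C=o(1)$ as $x\to 0^+$ (the integral section, being the unique representative satisfying the Abel equation for each block, pins down the asymptotic behavior up to terms smaller than every monomial in $\widehat{\mathfrak L}$). The $f$-invariance then propagates this to every point: for any $x\in(0,d)$,
\[
\Phi(x)-C \;=\; (\Phi-C)\bigl(f^{\circ n}(x)\bigr)\;\longrightarrow\;0,\quad n\to\infty,
\]
since $f^{\circ n}(x)\to 0$. Hence $\Phi\equiv C$, proving uniqueness up to an additive constant. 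The main obstacles I anticipate are two: first, making rigorous the formal Taylor-expansion computation in $\widehat{\mathfrak L}$, where blocks can be infinite sums in the iterated logarithm scale and one must carefully identify the leading term of the composition; and second, verifying that the integral-section definition is strong enough to upgrade the formal identity $\widehat\Phi=C$ to the asymptotic statement $\Phi-C=o(1)$ — this is precisely the feature that distinguishes integral from general sections and which fails in the example used for the non-uniqueness part.
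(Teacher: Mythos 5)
Your overall template — look at the difference $\Phi=\Psi_2-\Psi_1$, use $f$-invariance $\Phi\circ f=\Phi$, and exploit closure of integral-sectional expansions under subtraction — is the same starting point as the paper's. The key closure observation (if $\Psi_1,\Psi_2$ admit integral sectional expansions then so does $\Phi$, but this \emph{fails} for general sections) is correctly identified and is indeed the crux that separates the two halves of the Remark.

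However, your route through the \emph{formal} Abel equation has a genuine gap. After observing that $\Phi$ admits an integral sectional expansion $\widehat\Phi$ and that $\Phi(f)=\Phi$ on $(0,d)$, you assert ``on the formal side $\widehat\Phi$ solves the homogeneous formal Abel equation $\widehat\Phi(\widehat f)-\widehat\Phi=0$.'' This does not follow from what you have: the sectional-expansion map is not shown (and is not obviously) compatible with composition with $f$, so the germ-level identity $\Phi\circ f=\Phi$ does not transfer to a formal identity in $\widehat{\mathfrak L}$. This is exactly the kind of delicate point the integral-section machinery is set up to avoid. The paper sidesteps it entirely by staying at the germ level: it applies the Mean Value Theorem to $\Phi(f(x))-\Phi(x)=0$, getting $\Phi'(\xi_x)\cdot(f(x)-x)=0$ with $\xi_x$ between $f(x)$ and $x$, hence $\Phi'(\xi_x)=0$ for small $x$; the integral-section hypothesis pins down the asymptotic behavior of $\Phi'$ to be $\sim x^{\beta}R(\boldsymbol\ell)\sim x^\beta\boldsymbol\ell^M\not\equiv 0$ if the leading block of $\widehat\Phi$ is non-constant, a contradiction. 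In short, the integral section controls the \emph{derivative} of the germ, and the paper uses that directly rather than passing through a formal equation you have not derived.

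Your final step — $\widehat\Phi=C$ forces $\Phi-C=o(1)$, and $f$-invariance plus $f^{\circ n}(x)\to 0$ then gives $\Phi\equiv C$ — is correct and in fact rescues itself from the defect above if you reorganize: once you know $\Phi$ admits \emph{some} expansion in $\widehat{\mathfrak L}$, $\Phi$ is asymptotic to its leading term, so $\Phi$ has a (possibly infinite) limit at $0$; $f$-invariance with $f^{\circ n}(x)\to 0$ forces $\Phi(x)=\lim_{t\to 0}\Phi(t)$, which must be finite. You do not need the formal Abel equation at all; this alternative is shorter than both your proof and the paper's MVT argument.

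On the non-uniqueness part, your description is vague and does not match the paper's example. The paper takes $\Psi_1:=\Psi+\sin(2\pi\Psi)$. The correction $\sin(2\pi\Psi)$ is \emph{exactly} $f$-invariant (by $1$-periodicity and the Abel equation), so there is no ``Abel defect'' to compensate in a later block; the whole point is that this $f$-invariant, bounded, oscillating correction has no expansion in $\widehat{\mathfrak L}$, but it can be hidden inside a single block by choosing a bespoke (non-integral) section $\mathbf s$ that maps $\widehat f_1\mapsto f_1(\boldsymbol\ell)+e^{\alpha_1/\boldsymbol\ell}\sin(2\pi\Psi(e^{-1/\boldsymbol\ell}))$. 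Your picture of perturbing one block and compensating in a later one is a different (and not clearly realizable) mechanism.
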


\section{Sectional asymptotic expansions}
\label{sec:asymptotic_expansions}This paper is motivated by the study
of Dulac germs (see Definition~ \ref{def:dulac_germ}) and their Fatou coordinates. The Dulac asymptotic
expansions involve monomials of the form $x^{\alpha}\log^{p}x$, $\alpha\in\mathbb{R}$,
$p\in\mathbb{N}$, where each power of $x$ is multiplied by a \emph{polynomial} in $\log x$. 
The Dulac expansion of a germ is therefore uniquely given by the classical Poincar\'e algorithm. 
On the contrary, it turns out that the \emph{asymptotic expansions}
of Fatou coordinates of Dulac germs involve also \emph{powers of $x$ multiplied by possibly divergent series in iterated
logarithms}. Hence, the classical Poincar\'e algorithm does not suffice to produce them. In this work we give a generalization
of the algorithm to explain what it means for a germ from $\mathcal{G}$ to have a transfinite asymptotic expansion. In particular, our construction applies to Fatou coordinates.

Let us first illustrate our definitions on some examples. 

\begin{example}
1) Suppose that we want to express that the asymptotic expansion of the germ $f\in\mathcal{G}$ is the series
$\widehat{f}\left(x\right)=x\left(1+\boldsymbol{\ell}+\boldsymbol{\ell}^{2}+\cdots\right)+x^{2}$.
Obviously, we first require that, for every $p\in\mathbb{N}$, $f\left(x\right)-\sum_{n=0}^{p}x\boldsymbol{\ell}^{n}=o\left(x\boldsymbol{\ell}^{p}\right)$. This corresponds to the first
steps of the Poincar{\'e} algorithm, which are indexed by integers. Now an extra step is needed, which may be thought of as indexed by the ordinal number $\omega$. To perform this step, we can take advantage of the convergence of the series $\sum_{n\ge0}\boldsymbol{\ell}^{n}=\frac{1}{1-\boldsymbol{\ell}}$
to require that $f\left(x\right)-x\frac{1}{1-\boldsymbol{\ell}}$
is equivalent to $x^{2}$ at the origin.\\

2) The above process does not work if the first power
of $x$ in the series $\widehat{f}$ is multiplied by a \emph{divergent
	series} in $\boldsymbol{\ell}$. For example, how to give a meaning to the statement that the series $\widehat{f}\left(x\right)=x\sum_{n\ge0}n!\boldsymbol{\ell}^{n}+x^{2}$
is an asymptotic expansion of the germ $f\in\mathcal{G}$?
Our answer
consists in choosing a germ $g\in\mathcal{G}$ to which the series
$x\sum_{n\ge0}n!\boldsymbol{\ell}^{n}$ is asymptotic \emph{in
	the classical sense}. The germ $g$ can be seen as a \emph{sum} of this series. Then we perform an extra step by requiring that $f\left(x\right)-g\left(x\right)$
is equivalent to $x^{2}$ at the origin. That is, we first follow the usual Poincar\'e algorithm along steps indexed by integers. Once we have reached the step indexed by the first \emph{limit ordinal} $\omega$, we associate a \emph{sum} $g$ to (in general divergent)
 series $x\sum_{n\ge0}n!\boldsymbol{\ell}^{n}$ in order to proceed further.\\

3) In the same way, we say that a germ $f\in\mathcal{G}$
admits an
asymptotic expansion $\widehat{f}\left(x\right)=x\sum_{n\ge0}n!\boldsymbol{\ell}^{n}+x^{2}\sum_{n\ge0}\left(n!\right)^{2}\boldsymbol{\ell}^{n}+x^{3}$
if there exist two germs $g_{1},g_{2}\in\mathcal{G}$ (two \emph{sums}) such that:
\begin{itemize}
 \item[-] $x\sum_{n\ge0}n!\boldsymbol{\ell}^{n}$ is the (classical Poincar\' e) asymptotic expansion of $g_{1}$ at $0$, 

 \item[-] $x^{2}\sum_{n\ge0}\left(n!\right)^{2}\boldsymbol{\ell}^{n}$ is the (classical) asymptotic expansion of $g_2$ at $0$, and

 \item[-] $f-g_1-g_2$ is equivalent to $x^{3}$ at the origin. 
\end{itemize}

\noindent In this case, we will say that $x\sum_{n\ge0}n!\boldsymbol{\ell}^{n}+x^{2}\sum_{n\ge0}\left(n!\right)^{2}\boldsymbol{\ell}^{n}$ is a \emph{transfinite} asymptotic expansion of $g_1+g_2$ and that $\widehat f$ is a \emph{transfinite} asymptotic expansion of $f$.
\end{example}

\begin{obs}[Non-uniqueness of asymptotic expansions in $\widehat{\mathfrak L}$ of germs from $\mathcal G$]\label{nonuniq}In Subsection~\ref{subsec:tP} below,
we give a general version of the method illustrated by the above examples. The usual algorithm due to Poincar\'e, which associates an asymptotic expansion to a germ, proceeds term by term along steps indexed by integers, that is, by ordinals less than $\omega$. For germs of functions considered in this work, \emph{transfinite} asymptotic expansions will be produced in $\widehat{\mathfrak{L}}$ by a \emph{transfinite version} of this algorithm. The algorithm continues along steps indexed by ordinals (bigger than $\omega$), as it is the case in the examples above. We see that, when we reach a step indexed
	by a \emph{limit ordinal}, we have to provide a \emph{sum} in order to continue.
	
This feature leads us to stress an important fact. Whereas the classical algorithm associates to a germ a well-defined, unique asymptotic expansion, the expansions produced in $\widehat{\mathfrak{L}}$ by the generalized method are in general not unique: in the examples above, different choices of \emph{sums} $g$, $g_1$ and $g_2$ may lead to different asymptotic expansions.  This non-uniqueness is illustrated by Example~\ref{ex:nonun} below.
\end{obs}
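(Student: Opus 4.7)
The plan is to substantiate the non-uniqueness claim by exhibiting a single germ $f\in\mathcal G$ which admits two genuinely different transfinite asymptotic expansions in $\widehat{\mathfrak L}$; the ambiguity will arise precisely from the freedom to choose a classical sum at the step of the generalized Poincar\'e algorithm indexed by the limit ordinal $\omega$.

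First I would invoke a Borel--Ritt--type statement to produce a germ $g_1\in\mathcal G$ whose classical Poincar\'e asymptotic expansion in the scale $\{x\boldsymbol\ell^n\}_{n\in\mathbb N_0}$ is the divergent series $\widehat h(x):=x\sum_{n\ge 0}n!\,\boldsymbol\ell^n$ already used in Example~3.1. Since $x\cdot(-\log x)^N\to 0$ as $x\to 0^+$ for every $N\in\mathbb N$, one has $x^2=o(x\boldsymbol\ell^N)$ for all $N$. Consequently the germ $g_2:=g_1+x^2$ has the same classical asymptotic expansion $\widehat h$ as $g_1$, so both $g_1$ and $g_2$ are \emph{legitimate sums} of $\widehat h$ in the sense illustrated by Example~3.1.

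Next I would put $f:=g_2=g_1+x^2$ and run the transfinite algorithm of Subsection~\ref{subsec:tP} in two different ways. If the sum $g_1$ is selected at the $\omega$-th step, the remainder is $f-g_1=x^2$, whose ordinary Poincar\'e expansion consists of the single monomial $x^2$; the algorithm then terminates with
\[
\widehat f_1 \;=\; x\sum_{n\ge 0}n!\,\boldsymbol\ell^n+x^2.
\]
If instead $g_2$ is selected at step $\omega$, the remainder $f-g_2\equiv 0$ vanishes identically and the algorithm terminates with
\[
\widehat f_2 \;=\; x\sum_{n\ge 0}n!\,\boldsymbol\ell^n.
\]
Both transseries lie in $\widehat{\mathcal L}_1\subset\widehat{\mathfrak L}$ but differ in their $x^2$-coefficient, which establishes the non-uniqueness.

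The main obstacle is the rigorous Borel--Ritt step in the iterated-log scale and the verification that both $g_1$ and $g_2$ are admissible sums for the algorithm of Subsection~\ref{subsec:tP}. For the first point I would apply classical Borel summation to the formal series $\sum n!\,y^n$ in the auxiliary variable $y=\boldsymbol\ell\in(0,\boldsymbol\ell_0)$, yielding an analytic sum $g_1$ on a suitable sector; alternatively a $\mathcal C^\infty$ partition-of-unity construction, as in the standard proof of Borel--Ritt, works just as well. The second point reduces to checking that a sum in the sense of Subsection~\ref{subsec:tP} is required only to realize the classical Poincar\'e asymptotic relation in the scale $\{x\boldsymbol\ell^n\}_n$, which holds for both $g_1$ and $g_2$ by construction. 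These verifications are what Example~\ref{ex:nonun} referenced in the remark is expected to carry out in full detail.
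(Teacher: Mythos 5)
Your proposal is correct and rests on exactly the same core observation as the paper's Example~\ref{ex:nonun}: since $x=e^{-1/\boldsymbol\ell}$, a term like $x^{2}$ is exponentially small with respect to every monomial $\boldsymbol\ell^{n}$, so two germs differing by $x^{2}$ have identical classical Poincar\'e expansions in the $\boldsymbol\ell$-scale, and the freedom to pick either one as the sum at the $\omega$-step produces distinct transfinite expansions. The concrete realization differs, though. The paper builds its two counterexamples around the \emph{convergent} geometric series $\widehat{f}_{\omega}=x(1+\boldsymbol\ell+\boldsymbol\ell^{2}+\cdots)$, whose canonical sum $\frac{x}{1-\boldsymbol\ell}$ is explicit, so the two admissible sums $\frac{x}{1-\boldsymbol\ell}$ and $\frac{x}{1-\boldsymbol\ell}+x^{2}$ are written in closed form and no existence theorem is needed; it also adds a second sub-example showing that the resulting expansions can land in \emph{different} classes $\widehat{\mathcal L}_{k}$, a refinement your construction does not reach. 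You instead work with the divergent series $x\sum_{n}n!\,\boldsymbol\ell^{n}$ and must invoke a Borel--Ritt (or Borel summation) argument just to guarantee that \emph{some} sum $g_{1}$ exists. That extra machinery is dispensable here -- the ambiguity you want is already visible with a convergent series, precisely because the algorithm of Subsection~\ref{subsec:tP} does not force the canonical sum even when one is available -- but your version does carry the nice philosophical point that divergence of the intermediate series already renders the $\omega$-step genuinely non-canonical. Both are valid illustrations; the paper's is more economical and more complete, yours is arguably more striking as a statement about divergence.
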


\subsection{Transfinite Poincar\' e algorithm.}\label{subsec:tP}

Let the classes $\widehat{\mathcal L}_j^\infty$, $j\in\mathbb N$, and $\widehat{\mathfrak L}$ be as in Definition~\ref{def:eljot}. In this section we define in full generality what it means for a series
$\widehat{f}\in\widehat{\mathfrak{L}}$ to be an asymptotic expansion
of a germ $f\in\mathcal{G}$. This definition can be seen as \emph{transfinite}
version of the usual definition of Poincar\'e.
\medskip


Consider a germ $f\in\mathcal{G}$, a transseries $\widehat{f}\in\widehat{\mathfrak{L}}$
and an ordinal $\theta\ge 1$ (where $0$ denotes the smallest ordinal). We say that $\widehat{f}$
	is a \emph{truncated asymptotic expansion of length $\theta$ of $f$},
and we write $f\underset{\theta}{\sim}\widehat{f}$, if there exist
a sequence $\left(h_{\alpha}\right)_{0\leq \alpha<\theta}$ of elements
of $\mathcal{G}$, and a sequence $(\widehat{f}_{\alpha})_{0\leq \alpha<\theta}$
of elements of $\widehat{\mathfrak{L}}$, such that:
\medskip
\begin{enumerate}[1., font=\textup, nolistsep, leftmargin=0.6cm]
	\item $\widehat{f}=\lim_{\nu<\theta}\widehat{f}_{\nu}$ (for
	the product topology with respect to the discrete topology introduced
	in \cite{mrrz2});
	\item $h_{0}=f$, $\widehat{f}_{0}=0$;
	\item for all $0<\alpha<\theta$,
	we have:
	\begin{enumerate}[leftmargin=0.3cm]
		\item if $\alpha:=\nu+1$ is a successor ordinal, then
		\begin{enumerate}[leftmargin=0.1cm]
			\item either $h_{\nu}\rightarrow0$ faster than \emph{any}
			monomial from $\widehat{\mathfrak{L}}$ and $\theta=\alpha+1$, $\widehat f_\alpha=\widehat{f}_{\nu}$
			and $h_{\alpha}=h_{\nu}$,
			\item or there exists a monomial in $\widehat{\mathfrak L}$ (see Definition~\ref{def:eljot}), denoted by $\mathrm{Lt}\left(h_{\nu}\right)\in\widehat{\mathfrak{L}}$,
			such that
			\[
			\lim_{x\rightarrow0}\frac{h_{\nu}\left(x\right)}{\mathrm{Lt}\left(h_{\nu}\right)}=1,
			~\widehat{f}_{\alpha}=\widehat{f}_{\nu}+\mathrm{Lt}\left(h_{\nu}\right),~h_{\alpha}=h_{\nu}-\mathrm{Lt}\left(h_{\nu}\right),
				\]
		\end{enumerate}
		\item if $\alpha<\theta$ is a limit ordinal, then  $\widehat{f}_{\alpha}=\lim_{\nu<\alpha}\widehat{f}_{\nu}$ and
		\begin{enumerate}[leftmargin=0.1cm]
		\item either $\theta=\alpha+1$ and, for every $\beta\in\mathbb{R}$, there exists a block of $\widehat{f}_{\alpha}$ whose monomials are smaller than $x^{\beta}$, 
		\item
		or there exists a germ $g_{\alpha}\in\mathcal{G}$ with 
		\[
		g_{\alpha}\underset{\alpha}{\sim}\widehat{f}_\alpha\text{ and }h_{\alpha}=f-g_{\alpha}. 
		\]
	    \end{enumerate}
	\end{enumerate}
\end{enumerate}

Notice that in case (b), the sequence $(\widehat f_{\nu})_{\nu<\alpha}$ necessarily converges in the product topology with respect to the discrete topology introduced in \cite{mrrz2}, due to the fact that the orders of the monomials in the expansion strictly increase in each step.

Moreover,
the existence of such a germ $g_\alpha$ is trivially guaranteed. 
Indeed, without any additional restrictions imposed on the choice, we always have a trivial choice of the germ: if $f\underset{\theta}{\sim}\widehat{f}$, then $f$ itself admits an asymptotic expansion $\widehat f_\alpha$ of length $\alpha$, $f\underset{\alpha}{\sim}\widehat{f}_\alpha$, for any $\alpha<\theta$. Of course, we may (and usually do) choose some other germ that admits the same expansion. In particular, a good choice of a germ will be given by an integral section (see Section~\ref{sec:is}).\\

Finally, in the cases 3.(a)(i) and 3.(b)(i) of the above definition,  the series $\widehat{f}_{\alpha}$ is considered as a \emph{total expansion} of $f$, rather than a \emph{truncated expansion}. We call $\widehat{f}_{\alpha}$  simply an \emph{asymptotic expansion of $f$}, and we write $f\sim\widehat{f}_{\alpha}$, without any reference to its length.\\

Actually, the germs $g_{\alpha}$ above can be seen as \emph{sums} of the transseries $\widehat{f}_{\alpha}$. 
 Since $\boldsymbol\ell_k$ are exponentially small with respect to $\boldsymbol\ell_{k+1}$, $k\in\mathbb N_0$, as $x\to 0$, because of
 the relation 
 $$e^{-\frac{1}{\boldsymbol\ell_{k+1}}}=\boldsymbol\ell_{k},$$
 different choices of sums $g_\alpha$ lead in general to different transfinite asymptotic expansions.  This is illustrated in Example~\ref{ex:nonun} below.

\begin{example}\label{ex:nonun} \ 

\begin{enumerate}
\item $
f(x)=\frac{x}{1-\boldsymbol\ell}+x^2.
$
We can also write it as $f(x)=x(\frac{1}{1-\boldsymbol\ell}+e^{-\frac{1}{\boldsymbol\ell}})$.
The algorithm described above produces the intermediate series $\widehat{f}_{\omega}=x\left(1+\boldsymbol{\ell}+\boldsymbol{\ell}^{2}+\cdots\right)$.
Now we can associate two sums to $\widehat{f}_{\omega}$, namely:
\[
g_{\omega,1}\left(x\right)=\frac{x}{1-\boldsymbol{\ell}}\text{ and }g_{\omega,2}\left(x\right)=\frac{x}{1-\boldsymbol{\ell}}+x^{2}=f\left(x\right).
\]
Note that the second sum corresponds to the ending remark in the algorithm, which
says that $f$ itself can always be chosen as a possible sum for any of the
intermediate series $\widehat{f}_{\alpha}$, for $\alpha<\theta$ limit ordinal.
Hence, depending on the choice made at stage $\omega$, the series:
$$\widehat f(x)=x(1+\boldsymbol\ell+\boldsymbol\ell^2+\boldsymbol\ell^3+\cdots),\ \text{and } \widehat f(x)=x(1+\boldsymbol\ell+\boldsymbol\ell^2+\cdots)+x^2,$$
are two (equally good) asymptotic expansions of $f$ in $\widehat{\mathcal L}\subset \widehat{\mathfrak{L}}$.

\item
$
f(x)=\frac{x}{1-\boldsymbol\ell}.
$
Obviously, $f$ admits an asymptotic expansion $\widehat f\in\widehat{\mathcal L}_1$, $\widehat f=x(1+\boldsymbol\ell+\boldsymbol\ell^2+\cdots)$. However, equally legitimate asymptotic expansions by the transfinite Poincar\' e algorithm from Remark~\ref{nonuniq} are, for example:
\begin{align*}
f(x)=x\big(\frac{1}{1-\boldsymbol\ell}+\boldsymbol\ell_k e^{-\frac{1}{\boldsymbol\ell}}\big)&-x^2\boldsymbol\ell_k\\
&\Rightarrow\widehat f(x)=x(1+\boldsymbol\ell+\boldsymbol\ell^2+\cdots)-x^2\boldsymbol\ell_k\ \in\widehat{\mathcal{L}}_k,\ k\in\mathbb N.
\end{align*}
\end{enumerate}
In this manner, we can easily generate non-unique asymptotic expansions of $f\in \mathcal G$ in $\widehat{\mathfrak{L}}$. Moreover, they can belong to any $\widehat{\mathcal L}_k$, $k\in\mathbb N$. Note additionally that there does not exist a minimal $j\in\mathbb N$ such that the expansion of $f$ in $\widehat{\mathcal L}_j$ is unique. Indeed, if we put $\boldsymbol\ell$ instead of $\boldsymbol\ell_k$ in the example above, we get non-uniqueness of the expansion already in the class $\widehat{\mathcal L}_1$. 

On the contrary, just note that in the class $\widehat{\mathcal L}_0^\infty$, the asymptotic expansion of a germ, if it exists, is unique (by the Poincar\' e algorithm). 
\end{example}


\subsection{Sections. Sectional asymptotic expansions.}

Based on the content of the previous section, we can now define the notion of \emph{asymptotic transseries}:


\begin{defi}[The set $\widehat{\mathcal S}$ of asymptotic transseries in $\widehat{\mathfrak L}$]\label{sectional}
A series $\widehat{f}\in\widehat{\mathfrak{L}}$ such
that there exists an element $f\in\mathcal{G}$ with $f{\sim}\widehat{f}$ $($as described in Section
\ref{subsec:tP}$)$ is called an \emph{asymptotic transseries}.
We denote by $\widehat{\mathcal{S}}\subseteq\widehat{\mathfrak{L}}$
the set of all asymptotic transseries. By $\mathcal{S}\subseteq\mathcal{G}$, we denote
the set of all germs which admit an asymptotic expansion in $\widehat{\mathcal{S}}$ $($in the sense of Subsection~\ref{subsec:tP}$)$.
\end{defi}

We have stressed in the previous section the non-uniqueness of asymptotic expansions of germs in $\mathcal{S}$. This comes from the multiple ``sums'' which can be associated to the intermediate series at each limit ordinal step of the transfinite Poincar{\'e} algorithm. Therefore, in Definition~\ref{sectional}, asymptotic expansions of germs in $\widehat{\mathfrak L}$ (as well as in any $\widehat{\mathcal L}_j^\infty$, $j\geq 1$) are not unique. Note that the power asymptotic expansion of a germ (that is, the asymptotic expansion in $\widehat{\mathcal L}_0^\infty$), if it exists, is by classical Poincar\' e algorithm always unique. 
The uniqueness of the transfinite asymptotic expansion can be obtained by setting a ``summation mapping'' which associates to every series $\widehat{f}_{\alpha}$ indexed by limit ordinal $\alpha$ a well
defined sum $g_\alpha\in\mathcal G$ (using the notations of Section \ref{subsec:tP}).

Actually, picking a particular ``sum'' of a series among infinitely many possible ones is an operation quite similar to 
picking an element in the fiber over a point of the base space, in the
situation of a fiber bundle. Hence in the sequel we call such a summation mapping a \emph{section} 
(see the precise Definition \ref{def:sectional}), and the associated expansions \emph{sectional asymptotic expansions}.
In particular, we define later a particular choice of a section adapted to our problem that will guarantee the uniqueness of the asymptotic expansion of a germ with respect to this section.
\\

It is convenient to impose upon our sections to be linear maps defined on appropriate vector subspaces of $\widehat{\mathcal{S}}$. Notice however that, while $\widehat{\mathcal{S}}$ is obviously a vector space, it is not the case for the set $\mathcal S$, as the following example shows. 

\begin{example}[$\mathcal S$ is not a vector space] Take $$f_1(x)=x\frac{1}{1-\boldsymbol\ell}\in\mathcal G,\ f_2(x)=x\frac{1}{1-\boldsymbol\ell}+x^2\sin(x)\in\mathcal G$$ Note that $f_2$ can be written as $f_2(x)=x\big(\frac{1}{1-\boldsymbol\ell}+e^{-1/\boldsymbol\ell}\sin(e^{-1/\boldsymbol\ell})\big)$. Since both germs $\frac{1}{1-\boldsymbol\ell}$ and $\frac{1}{1-\boldsymbol\ell}+e^{-1/\boldsymbol\ell}\sin(e^{-1/\boldsymbol\ell})$ admit $1+\boldsymbol\ell+\boldsymbol\ell^2+\ldots\in\widehat{\mathcal L}_0$ as a power asymptotic expansion, both $f_1$ and $f_2$ admit $\widehat f=x(1+\boldsymbol\ell+\boldsymbol\ell^2+\ldots)\in\widehat{\mathcal L}_1$ as their asymptotic expansion in $\widehat{\mathfrak L}$ (as in Definition~\ref{sectional}). On the other hand, $f_1(x)-f_2(x)=x^2\sin x$ obviously does not admit an asymptotic expansion in $\widehat{\mathfrak L}$.
\end{example}
\begin{defi}\label{def:sectional} \

$(i)$ A \emph{section} $\mathbf s$ is any linear mapping defined on some vector space $\widehat{\mathcal I}_{\mathbf s}\subseteq \widehat{\mathcal S}$, $$\mathbf s:\widehat{\mathcal I}_{\mathbf s}\to \mathcal G_{AN},$$
where $\widehat g\in\widehat{\mathcal S}$ is an asymptotic expansion of $\mathbf s(\widehat g)\in\mathcal G_{AN}$ $($obtained by the transfinite Poincar\' e algorithm from Section~\ref{subsec:tP}$)$.

We say that the germ $g=\mathbf{s}(\widehat g)$ is the \emph{sectional sum of $\widehat g$ with respect to the section $\mathbf s$}.
\smallskip

$(ii)$ We say that a germ $f\in \mathcal G_{AN}$ admits a \emph{sectional asymptotic expansion $\widehat f\in\widehat{\mathcal S}$ with respect to the section $\mathbf s$} if:
\begin{itemize}
\item[-] $f\sim \widehat f$ $($see Section~\ref{subsec:tP}$)$, 
\item[-] at each limit ordinal step $0<\alpha<\theta$ it holds that $\widehat f_\alpha\in\widehat{\mathcal I}_{\mathbf s}$ and $g_\alpha=\mathbf s(\widehat f_\alpha)$, where $\theta$ is the length of the expansion  $\widehat f$ of $f$.
\end{itemize}
That is, the choice of germs $g_\alpha\in\mathcal G_{AN}$ at limit ordinal steps of the transfinite Poincar\' e algorithm is uniquely determined by the section $\mathbf s$. 
\end{defi}



Notice that there exist germs from $\mathcal G$ that admit sectional asymptotic expansions in $\widehat{\mathfrak{L}}$ with respect to some section, but which do not admit sectional asymptotic expansion in $\widehat{\mathfrak{L}}$ with respect to some other section. Take, for example, the germ
$$
f(x)=x\cdot\Big(\frac{1}{1-\boldsymbol\ell}+x\cdot \sin\frac{1}{\boldsymbol\ell}\Big)
$$
and any section $\mathbf s$ such that $\mathbf s\big(x\sum_{k=0}^{\infty} \boldsymbol\ell^k\big)=\frac{x}{1-\boldsymbol\ell}$. Obviously, $f$ does not admit asymptotic expansion in $\widehat{\mathfrak L}$ with respect to $\mathbf s$. On the contrary, take any section $\mathbf s$ such that $\mathbf s\big(x\sum_{k=0}^{\infty}\boldsymbol\ell^k\big)=\frac{x}{1-\boldsymbol\ell}+e^{-\frac{1}{\boldsymbol\ell}}\sin\frac{1}{\boldsymbol\ell}$. Then the sectional asymptotic expansion of $f$ with respect to the section $\mathbf s$ is then equal to $\widehat f(x)=x\sum_{k=0}^{\infty}\boldsymbol\ell^k.$

\begin{prop} [Uniqueness of the $\mathbf s$-sectional asymptotic expansion] The $\mathbf s$-sectional asymptotic expansion of a germ $f\in\mathcal G$, if it exists, is unique.
	 \end{prop}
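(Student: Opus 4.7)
The plan is to argue by transfinite induction on the ordinal $\alpha\leq \theta$ running through the stages of the transfinite Poincar\'e algorithm. Suppose that $f\in\mathcal G$ admits two $\mathbf s$-sectional asymptotic expansions $\widehat f$ and $\widehat g$, of lengths $\theta$ and $\theta'$ respectively. Denote by $(\widehat f_\alpha, h_\alpha)_{\alpha<\theta}$ and $(\widehat g_\alpha, k_\alpha)_{\alpha<\theta'}$ the associated sequences produced by the algorithm. I will show by induction that for every $\alpha<\min(\theta,\theta')$ we have $\widehat f_\alpha=\widehat g_\alpha$ and $h_\alpha=k_\alpha$, and then verify that the stopping conditions match, so that $\theta=\theta'$ and $\widehat f=\widehat g$.

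First, the base case is trivial since $\widehat f_0=\widehat g_0=0$ and $h_0=k_0=f$ by item~2 of the algorithm. For the successor step, suppose $\widehat f_\nu=\widehat g_\nu$ and $h_\nu=k_\nu$. Then either both remainders are flatter than every monomial of $\widehat{\mathfrak L}$, in which case both expansions terminate simultaneously at length $\nu+1$ with $\widehat f_{\nu+1}=\widehat g_{\nu+1}$ by 3.(a)(i); or else the leading monomial $\mathrm{Lt}(h_\nu)=\mathrm{Lt}(k_\nu)$ is uniquely determined by the germ (the condition $\lim h_\nu/M=1$ fixes both the exponents and the coefficient of a monomial $M\in\widehat{\mathfrak L}$), and hence $\widehat f_{\nu+1}=\widehat g_{\nu+1}$ and $h_{\nu+1}=k_{\nu+1}$ by 3.(a)(ii).

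For a limit ordinal $\alpha<\min(\theta,\theta')$, the inductive hypothesis gives $\widehat f_\nu=\widehat g_\nu$ for all $\nu<\alpha$, so their limits in the product/discrete topology of \cite{mrrz2} coincide: $\widehat f_\alpha=\widehat g_\alpha$. Now the alternative 3.(b)(i)/3.(b)(ii) depends only on the series $\widehat f_\alpha=\widehat g_\alpha$, so it occurs simultaneously for both expansions. In the non-terminating case 3.(b)(ii), the definition of an $\mathbf s$-sectional asymptotic expansion (Definition~\ref{def:sectional}) imposes $g_\alpha=\mathbf s(\widehat f_\alpha)$ and similarly for the second expansion, so the two choices coincide: $g_\alpha=\mathbf s(\widehat f_\alpha)=\mathbf s(\widehat g_\alpha)$, giving $h_\alpha=f-g_\alpha=k_\alpha$. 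This is precisely the point where non-uniqueness is eliminated by fixing the section $\mathbf s$.

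Finally, the induction shows that $\theta=\theta'$, because the two terminating alternatives 3.(a)(i) and 3.(b)(i) are triggered by conditions on $h_\nu$ and on $\widehat f_\alpha$ that match at every step, and then $\widehat f=\lim_{\nu<\theta}\widehat f_\nu=\lim_{\nu<\theta'}\widehat g_\nu=\widehat g$. The only step that might look delicate is the successor one, where one must check that the ``leading monomial'' of a germ in $\mathcal G$ (when it exists in $\widehat{\mathfrak L}$) is unambiguous; but this is immediate from the fact that a relation $\lim M_1/M_2=1$ between two monomials in $\widehat{\mathfrak L}$ forces equality of all exponents and coefficients. No other obstacle arises, since at the limit ordinal steps the section $\mathbf s$ does all the uniqueness work for us by design.
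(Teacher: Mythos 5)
Your proof is correct and is the detailed version of the transfinite induction that the paper merely alludes to (the paper only states that the claim ``follows by the definition of $\mathbf s$-sectional asymptotic expansions and the transfinite Poincar\'e algorithm''). You correctly identify the three points that need checking: the leading monomial at successor stages is forced by the germ, the series at a limit stage is forced by the predecessors via the topological limit, and the sum at a limit stage is forced by the section $\mathbf s$ -- and your observation that a relation $\lim M_1/M_2=1$ between monomials of $\widehat{\mathfrak L}$ forces $M_1=M_2$ is exactly the right justification for the successor step.
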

\noindent The proof follows by the definition of $\mathbf s$-sectional asymptotic expansions and the transfinite Poincar\' e algorithm from Subsection~\ref{subsec:tP}.
\smallskip

On the other hand, note that by Definition~\ref{def:sectional}, the injectivity of the mapping $f\mapsto\widehat f$, where $\widehat f$ is the $\mathbf s$-sectional asymptotic expansion of $f$, is \emph{not} implied. \\

It is clear that all the sectional asymptotic expansions of a germ $f\in\mathcal{S}$ have the same leading term (see Definition \ref{def:eljot}). Hence we can put:

\begin{defi} Let $f\in\mathcal S.$ The \emph{leading term of $f$}, denoted by 
	$\mathrm{Lt}(f)$, is the leading term $\mathrm{Lt}(\widehat f)$ of its any sectional asymptotic expansion $\widehat f\in\widehat{\mathcal S}$.  
\end{defi}

We define here what we mean by \emph{convergent transseries} in $\widehat{\mathfrak L}$, which are canonically summable in some sense. We introduce a desired property of sections: we call it \emph{coherence}. It means that a section respects convergence, that is, to a convergent transseries in $\widehat{\mathfrak L}$ assigns its sum.

\begin{defi}[Convergent transseries in $\widehat{\mathfrak L}$] \label{def:conv}Let $\widehat f\in\widehat {\mathcal L}_j^\infty$, $j\in\mathbb N_0$. We will say that $\widehat f$ given by \eqref{summable} is a \emph{convergent transseries} if \eqref{summable} is a  summable family of monomials $($summable pointwise on some open interval $(0,d)$, $d>0$$)$. That is, if there exists $d>0$ such that the multiple sum converges absolutely on $(0,d)$:
\begin{equation}\label{red}\sum_{i_{0}=0}^{\infty}\sum_{i_{1}=0}^{\infty}\cdots\sum_{i_{j}=0}^{\infty}\left|a_{i_{0}\ldots i_{j}}\right|x^{\alpha_{i_{0}}}\boldsymbol{\ell}^{\alpha_{i_{0}i_{1}}}\cdots\boldsymbol{\ell}_{j}^{\alpha_{i_{0}\cdots i_{j}}}<\infty,\quad x\in\left(0,d\right)
	.\end{equation}
In that case, by $f\in\mathcal G$ we denote the sum of $\widehat f$ on $(0,d)$ in the sense of summable families:
\begin{equation}\label{eq:summe}
\begin{alignedat}{1}f(x):=\sum_{i_{0}=0}^{\infty}\sum_{i_{1}=0}^{\infty} & \cdots\sum_{i_{j}=0}^{\infty}a_{i_{0}\ldots i_{j}}^{+}x^{\alpha_{i_{0}}}\boldsymbol{\ell}^{\alpha_{i_{0}i_{1}}}\cdots\boldsymbol{\ell}_{j}^{\alpha_{i_{0}\cdots i_{j}}}\\
& -\sum_{i_{0}=0}^{\infty}\sum_{i_{1}=0}^{\infty}\cdots\sum_{i_{j}=0}^{\infty}a_{i_{0}\ldots i_{j}}^{-}x^{\alpha_{i_{0}}}\boldsymbol{\ell}^{\alpha_{i_{0}i_{1}}}\cdots\boldsymbol{\ell}_{j}^{\alpha_{i_{0}\cdots i_{j}}},\quad x\in\left(0,d\right)
\end{alignedat}
\end{equation}

where $a_{i_0\ldots i_j}^+:=\max\{a_{i_0\ldots i_j},0\}>0$, $a_{i_0\ldots i_j}^-:=\max\{-a_{i_0\ldots i_j},0\}>0$.
\end{defi}
\noindent For exact definition and properties of summable families, see e.g. \cite{dieu}. Note that, due to positivity, the order of summation in \eqref{red} and \eqref{eq:summe} is not important.
 
\begin{defi}[Coherent sections]\label{remi} We say that a section $\mathbf{s}$ is \emph{coherent} if it respects \emph{convergence}. That is, if for every convergent $\widehat f\in\widehat{\mathcal I}_s\subseteq \widehat {\mathcal S}$ it holds that $\mathbf{s}(\widehat f)=f$, where $f$ is the sum of $\widehat f$ in the sense of Definition~\ref{def:conv}. 
\end{defi}
Let $\mathbf s$ be a coherent section. Let  $\widehat f$ convergent, with sum $f\in\mathcal G$, belong to $\widehat{\mathcal I}_\mathbf s$. By Fubini's theorem, all partial expansions of $f$, $\widehat f_\alpha$, with respect to $\mathbf s$, are also convergent. By Definition~\ref{def:sectional} of sections, it implies that also $\widehat f_\alpha\in \widehat{\mathcal I}_{\mathbf s}$. Therefore, the above definition of coherence is consistent.

\subsection{Integral sections}\label{sec:is}

We define here a particular family of coherent sections which we call the \emph{integral sections}. The definition of integral section is based on Definition~\ref{defi}, which provides a formula for assigning a unique sum to a special type of divergent series in $\widehat{\mathcal L}_0^\infty$, which we call \emph{integrally summable}. The formula is adapted to the Fatou coordinate constructed in Section~\ref{sec:Fatou}.  It uses the fact that the Fatou coordinate, as a formal transseries or as a germ at $0$, is a solution of the Abel equation. See Example~\ref{ex:expa} (2) in Section~\ref{sec:examples} or Example~\ref{ex:exampleconstruction} in Section~\ref{sec:Fatou} for a better understanding.


\begin{defi}[Integrally summable series in $\widehat {\mathcal L}_0^\infty$]\label{defi}\

$(1)$ By $\widehat{\mathcal L}_0^I\subset \widehat{\mathcal L}_0^\infty$ we denote the set of all formal series $$\widehat f(y)=\sum_{n=N}^{\infty}a_n y^n\in\widehat{\mathcal L}_0^\infty,\ N\in\mathbb Z,\ a_n\in\mathbb R,$$ 
which are either: 
\begin{enumerate}
\item[(i)] convergent on an open interval $(0,\delta)$, $\delta>0$, or 

\item[(ii)] divergent and such that
 there exists $\alpha\in\mathbb R$, $\alpha\neq 0$, for which 
\begin{equation}\label{deff}
\frac{d}{dx}\Big(x^ {\alpha} \widehat f(\boldsymbol\ell)\Big)=x^{\alpha-1} R(\boldsymbol\ell),
\end{equation}
 formally in $\widehat{\mathcal L}^\infty$,
where $R$ is a convergent Laurent series.

\end{enumerate}

\noindent We call $\widehat {\mathcal L}_0^I$ the set of \emph{integrally summable series of $\widehat{\mathcal L}_0^\infty$}.
\medskip

$(2)$ \begin{enumerate}
\item[(i)] If $\widehat{f}$ is convergent, we define its \emph{integral sum} as the usual sum $($on an interval $(0,\delta))$.

\item[(ii)] If $\widehat f\in\widehat {\mathcal L}_0^I$ is divergent, we define its \emph{integral sum} $f\in\mathcal G_{AN}$ by: 
\begin{equation}\label{eq:joj}
f(y):=
\frac{\int_d^{e^{-1/y}} s^{\alpha-1}R\big(\boldsymbol\ell(s)\big)\,ds}{e^{-\frac{\alpha}{y}}},
\end{equation}
where $\boldsymbol\ell(s)=-\frac{1}{\log s}$. We put $d=0$ if $s^{\alpha-1}R\big(\boldsymbol\ell(s)\big)$ is integrable at $0$ $($i.e. $\alpha>0)$ and $d>0$ otherwise $($i.e. $\alpha<0)$.
\end{enumerate}
\end{defi} 

\begin{obs} Note that the two parts of the definition of the integral sum 
are consistent. Indeed, in the case when $\widehat{f}$ is convergent, let $R$ be a convergent series defined by \eqref{deff} for $\alpha=0$. Now, the integral expression  \eqref{eq:joj} for $f(y)$, with $\alpha=0$, $d>0$, coincides with the sum of $\widehat{f}$ up to a constant. 
\end{obs}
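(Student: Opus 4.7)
The plan is to verify that, when $\widehat f\in\widehat{\mathcal L}_0^{\infty}$ is convergent and we extend the formula \eqref{eq:joj} to the boundary case $\alpha=0$, the resulting germ differs from the ordinary sum of $\widehat f$ only by an additive constant; this pins down the compatibility of parts $(i)$ and $(ii)$ of Definition~\ref{defi}. The core observation is that the substitution $u=\boldsymbol\ell(s)$ trivialises the integral.

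First I would compute $R$ from \eqref{deff} with $\alpha=0$. Using $\boldsymbol\ell(x)=-1/\log x$ and $\boldsymbol\ell'(x)=\boldsymbol\ell^{2}/x$, the chain rule gives
\[
\frac{d}{dx}\widehat f(\boldsymbol\ell)=\widehat f'(\boldsymbol\ell)\cdot\frac{\boldsymbol\ell^{2}}{x}=x^{-1}\bigl(\widehat f'(\boldsymbol\ell)\,\boldsymbol\ell^{2}\bigr),
\]
so $R(y)=y^{2}\widehat f'(y)$, a convergent Laurent series since $\widehat f$ is. Plugging into \eqref{eq:joj} with $\alpha=0$ yields $f(y)=\int_{d}^{e^{-1/y}}s^{-1}R(\boldsymbol\ell(s))\,ds$, the denominator $e^{-\alpha/y}$ being equal to $1$.

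Next I would change variables $u=\boldsymbol\ell(s)$. From $du/ds=1/(s\log^{2}s)=u^{2}/s$ we get $ds/s=du/u^{2}$; the limits transform as $s=d\mapsto u_{0}:=-1/\log d$ and $s=e^{-1/y}\mapsto u=y$; and the integrand simplifies to $R(u)/u^{2}=\widehat f'(u)$. Hence
\[
f(y)=\int_{u_{0}}^{y}\widehat f'(u)\,du=\widehat f(y)-\widehat f(u_{0}),
\]
which is exactly the ordinary sum of $\widehat f$ at $y$ minus the constant $\widehat f(u_{0})$, as claimed.

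The only subtlety I anticipate is ruling out the appearance of a logarithmic term when integrating $\widehat f'$ termwise, which would spoil the identification. But termwise differentiation of $\widehat f(y)=\sum_{n\ge N}a_{n}y^{n}$ never produces a $y^{-1}$ coefficient, since the derivative of $y^{n}$ is $n y^{n-1}$, and the only $n$ giving exponent $-1$ is $n=0$, for which the factor $n$ vanishes. Combined with absolute convergence on a small interval $(0,\delta)$ justifying termwise differentiation and integration, this establishes the consistency claim; the ambiguity introduced by the integral construction $(ii)$, extended formally to $\alpha=0$, is precisely the choice of lower limit $d>0$, which accounts for the additive constant asserted in the remark.
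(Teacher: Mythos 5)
Your argument is correct and supplies the justification that the paper leaves to the reader (the remark is stated without proof). The substitution $u=\boldsymbol\ell(s)$, which reduces the integral to $\int_{\boldsymbol\ell(d)}^{y}\widehat f'(u)\,du$, together with the check that $\widehat f'$ contains no $u^{-1}$ term (so no logarithm appears), is exactly the intended verification, and your identification of the additive ambiguity with the free choice of $d>0$ is consistent with Remark~\ref{rem:uniqLo}.
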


\begin{obs} 

The idea behind the definition of an integral sum $f$ of $\widehat f$ is that $f$ is defined as a germ from $\mathcal G_{AN}$ such that the same equation as \eqref{deff} is satisfied, but in the sense of germs:
$$
\frac{d}{dx}\big(x^\alpha f(\boldsymbol\ell)\big)=x^{\alpha-1}R(\boldsymbol\ell),\ x\in(0,d),\ d>0.
$$
 Note that the solution of \eqref{deff} in the sense of germs is given by \eqref{eq:joj}.
\end{obs}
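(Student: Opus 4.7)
The plan is to verify the germ-level identity directly, by a substitution and an application of the fundamental theorem of calculus; no subtle argument is needed beyond carefully tracking the composition with $\boldsymbol\ell$.

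First I would record the key identity between the two variables. Since $\boldsymbol\ell(x)=-1/\log x$, we have $1/\boldsymbol\ell(x)=-\log x$, hence $e^{-1/\boldsymbol\ell(x)}=x$, and consequently $e^{-\alpha/\boldsymbol\ell(x)}=x^{\alpha}$. Substituting $y=\boldsymbol\ell(x)$ into formula \eqref{eq:joj} therefore gives
$$
f(\boldsymbol\ell(x))=\frac{\int_{d}^{x}s^{\alpha-1}R(\boldsymbol\ell(s))\,ds}{x^{\alpha}},\qquad\text{i.e.}\qquad x^{\alpha}f(\boldsymbol\ell(x))=\int_{d}^{x}s^{\alpha-1}R(\boldsymbol\ell(s))\,ds.
$$
Differentiating in $x$ by the fundamental theorem of calculus yields
$$
\frac{d}{dx}\bigl(x^{\alpha}f(\boldsymbol\ell(x))\bigr)=x^{\alpha-1}R(\boldsymbol\ell(x)),\quad x\in(0,d),
$$
which is exactly the germ analog of \eqref{deff} claimed in the remark.

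It remains to justify that the defining integral makes sense and that $f\in\mathcal{G}_{AN}$. Since $R$ is a convergent Laurent series in its argument, $R(\boldsymbol\ell(s))$ is analytic on some interval $(0,d_0)$ and grows at most like a fixed negative power of $|\log s|$ as $s\to 0^+$. Consequently $s^{\alpha-1}R(\boldsymbol\ell(s))$ is integrable at $0$ precisely when $\alpha>0$, which justifies the prescription $d=0$ for $\alpha>0$ and $d>0$ otherwise in Definition~\ref{defi}. Changing the lower endpoint $d$ adds to $\int_{d}^{x}s^{\alpha-1}R(\boldsymbol\ell(s))\,ds$ a constant $C$, and hence changes $f(\boldsymbol\ell(x))$ by $Cx^{-\alpha}$; this is $\mathcal{C}^{\infty}$ (in fact analytic) on $(0,d)$ and is annihilated by $\frac{d}{dx}\circ(x^{\alpha}\cdot)$, so it does not affect the differential identity.

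The only potential obstacle is bookkeeping: one must remember that in \eqref{deff} the formal identity is in the variable $x$ with $\widehat f$ evaluated at $\boldsymbol\ell$, while in \eqref{eq:joj} the germ $f$ is written as a function of the variable $y$ that will play the role of $\boldsymbol\ell(x)$. Once this correspondence is fixed by the identity $e^{-1/\boldsymbol\ell(x)}=x$, the verification reduces to one application of the fundamental theorem of calculus, and the remark is established.
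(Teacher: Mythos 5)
Your verification is correct and is the direct check the remark has in mind (the paper itself states this observation without a worked proof). The substitution $y=\boldsymbol\ell(x)$, which turns $e^{-1/y}$ into $x$ and $e^{-\alpha/y}$ into $x^\alpha$, followed by the fundamental theorem of calculus, is exactly the right reading of \eqref{eq:joj} against \eqref{deff}, and your closing observation that changing the lower endpoint $d$ adds $Cx^{-\alpha}$ to $x^\alpha f(\boldsymbol\ell(x))$'s numerator matches Remark~\ref{rem:uniqLo}. One small inaccuracy: since $R$ is a Laurent series it may contain negative powers of its argument, so $R(\boldsymbol\ell(s))$ can grow like a \emph{positive} power of $|\log s|$ as $s\to 0^+$, not only like a negative one as you wrote; but this is still subpolynomial, so the integrability criterion (integrable at $0$ precisely when $\alpha>0$) and hence the rest of the argument are unaffected.
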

\smallskip

\begin{prop}\label{alph} The exponent $\alpha\neq 0$ in Definition~\ref{defi} $(1)\,(ii)$ is unique. We call such $\alpha$ the \emph{exponent of integration of $\widehat f$}.
\end{prop}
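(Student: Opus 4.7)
The plan is to reduce the uniqueness claim to a simple linearity argument, after an explicit computation of the formal differentiation in~\eqref{deff}. First I would compute the formal derivative $\frac{d\boldsymbol{\ell}}{dx}=\frac{\boldsymbol{\ell}^{2}}{x}$, which implies, by the Leibniz rule, that for $\widehat{f}(y)=\sum_{n\ge N}a_{n}y^{n}\in \widehat{\mathcal L}_0^\infty$ one has the identity
\[
\frac{d}{dx}\bigl(x^{\alpha}\widehat{f}(\boldsymbol{\ell})\bigr)=x^{\alpha-1}\bigl(\alpha\,\widehat{f}(\boldsymbol{\ell})+\boldsymbol{\ell}^{2}\widehat{f}'(\boldsymbol{\ell})\bigr)
\]
formally in $\widehat{\mathcal L}^\infty$. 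Therefore the defining relation~\eqref{deff} is equivalent to the formal identity
\[
\alpha\,\widehat{f}(y)+y^{2}\widehat{f}'(y)=R(y)
\]
in the variable $y=\boldsymbol{\ell}$, with $R$ a convergent Laurent series.

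Now I would proceed by contradiction. Suppose that $\widehat{f}\in\widehat{\mathcal L}_{0}^{I}$ is divergent and admits two distinct nonzero exponents of integration $\alpha_{1}\ne\alpha_{2}$, with associated convergent Laurent series $R_{1}$ and $R_{2}$ respectively. Then
\[
\alpha_{i}\,\widehat{f}(y)+y^{2}\widehat{f}'(y)=R_{i}(y),\qquad i=1,2.
\]
Subtracting these two equations term by term yields
\[
(\alpha_{1}-\alpha_{2})\,\widehat{f}(y)=R_{1}(y)-R_{2}(y).
\]
Since the right-hand side is a difference of convergent Laurent series, it is itself a convergent Laurent series, and, as $\alpha_{1}-\alpha_{2}\ne 0$, dividing shows that $\widehat{f}$ is itself a convergent Laurent series. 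This contradicts the hypothesis of Definition~\ref{defi}~$(1)(ii)$ that $\widehat{f}$ is divergent, and concludes the proof.

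The argument is essentially linear algebra combined with the explicit formula for $\frac{d}{dx}$ acting on $x^{\alpha}\widehat{f}(\boldsymbol{\ell})$; the only point that requires a moment of care is the observation that the defining relation~\eqref{deff} is \emph{linear} in the pair $(\alpha,\widehat{f})$ at fixed $\widehat{f}$, so that two admissible $\alpha$'s may be subtracted. I do not anticipate any real obstacle, beyond checking that the formal computation of the derivative is carried out correctly in $\widehat{\mathcal L}^{\infty}$ and that the notion of ``convergent Laurent series'' used here is stable under linear combinations, both of which are straightforward.
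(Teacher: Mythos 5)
Your proof is correct and follows essentially the same route as the paper: both derive the identity $(\alpha_{1}-\alpha_{2})\widehat{f}=R_{1}-R_{2}$ and conclude that $\widehat{f}$ would be a convergent Laurent series, contradicting divergence. The only superficial difference is that you expand $\frac{d}{dx}\bigl(x^{\alpha}\widehat{f}(\boldsymbol{\ell})\bigr)$ explicitly via $\frac{d\boldsymbol{\ell}}{dx}=\boldsymbol{\ell}^{2}/x$, whereas the paper reaches the same relation by factoring $x^{\alpha}=x^{\alpha-\beta}\cdot x^{\beta}$ and applying the product rule.
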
 
\noindent The proof is in the Appendix. 
Note that if \eqref{deff} were true for $\alpha=0$, it would imply that $\widehat f$ is convergent, so we can suppose $\alpha\neq 0$ for divergent series. 
\medskip

\begin{obs}\label{rem:uniqLo}\

(1) Note that the integral sum $f$ of $\widehat f\in\widehat{\mathcal L}_0^I$, as defined by \eqref{eq:joj}, is 
unique for $\alpha>0$. It is 
\emph{unique only up to $Ce^{\frac{\alpha}{y}}$, $C\in\mathbb R$}, for $\alpha<0$, due to the possible choice of $d>0$. For $\alpha=0$ (that is, if $\widehat f$ is convergent), the integral sum is unique.
\smallskip

(2) Proposition~\ref{prop:rational_fct} in the Appendix shows that $\widehat f\in\widehat{\mathcal L}_0^I$ is the power asymptotic expansion of its integral sums.
In the case $\alpha<0$, the term $Ce^{\frac{\alpha}{x}}$ in the integral sum $f$ is exponentially small with respect to monomials in $\widehat{\mathcal L}_0$. Therefore, adding such a term to $f$ does not change its asymptotic expansion $\widehat f\in\widehat{\mathcal L}_0$.

\end{obs}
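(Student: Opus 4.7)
The plan is to prove the two claims of Remark~\ref{rem:uniqLo} by a direct analysis of the integral formula \eqref{eq:joj}, splitting cases according to the sign of the exponent of integration $\alpha$ as prescribed by Definition~\ref{defi}, and then invoking Proposition~\ref{prop:rational_fct} from the Appendix to handle part (2). There is no serious obstacle here: the assertions follow formally from the structure of the definition and from the flatness of $e^{\alpha/y}$ at $0$ when $\alpha<0$.

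For part (1), I would treat three subcases. If $\alpha=0$, the series $\widehat f$ is convergent by Definition~\ref{defi}(1)(i), its integral sum coincides with the classical sum, and uniqueness is standard. If $\alpha>0$, Definition~\ref{defi}(2)(ii) forces $d=0$, which is legitimate since the integrand $s^{\alpha-1}R(\boldsymbol\ell(s))$ is integrable at $0$; with no parameter left, the formula \eqref{eq:joj} produces a single germ $f\in\mathcal G_{AN}$. If $\alpha<0$, the parameter $d>0$ remains free, and for any two positive choices $d_1,d_2$ I would split the domain of integration as $\int_{d_1}^{e^{-1/y}}=\int_{d_1}^{d_2}+\int_{d_2}^{e^{-1/y}}$, giving
$$f_{d_1}(y)-f_{d_2}(y)=\frac{\int_{d_1}^{d_2}s^{\alpha-1}R(\boldsymbol\ell(s))\,ds}{e^{-\alpha/y}}=C\,e^{\alpha/y},$$
with $C:=\int_{d_1}^{d_2}s^{\alpha-1}R(\boldsymbol\ell(s))\,ds\in\mathbb R$ (a finite constant, since the integrand is continuous on the compact interval between the two positive endpoints). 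This establishes that two integral sums always differ by a term of the form $Ce^{\alpha/y}$, as claimed.

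For part (2), Proposition~\ref{prop:rational_fct} of the Appendix (which I take as granted) yields that $\widehat f\in\widehat{\mathcal L}_0^I$ is the power asymptotic expansion at $0$ of any of its integral sums. It then suffices to note that for $\alpha<0$, writing $e^{\alpha/y}=e^{-|\alpha|/y}$, one has $e^{\alpha/y}/y^n\to 0$ as $y\to 0^+$ for every $n\in\mathbb Z$, so $e^{\alpha/y}$ is flat with respect to every monomial of $\widehat{\mathcal L}_0$. Consequently, if $f$ admits $\widehat f$ as power asymptotic expansion, so does $f+Ce^{\alpha/y}$, and the non-uniqueness coming from the choice of $d>0$ does not affect the associated expansion. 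This closes both parts of the remark.
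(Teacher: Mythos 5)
Your proof is correct and follows exactly the line of reasoning the paper leaves implicit: the remark is an immediate consequence of the prescription for $d$ in Definition~\ref{defi}(2) and of Proposition~\ref{prop:rational_fct}, and your case split (additivity of the integral over $[d_1,d_2]$ for $\alpha<0$, $d=0$ forced for $\alpha>0$, classical sum for $\alpha=0$) together with the flatness of $e^{\alpha/y}$ at $0^+$ is precisely the argument one would supply.
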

\smallskip

\begin{defi}[Integral sections]\label{def:is} The vector subspace $\widehat{\mathcal I}$ of $\widehat{\mathcal S}$ generated by the transseries $\widehat{g}(x)=x^{\alpha}\widehat f(\boldsymbol\ell)$, where $\widehat f\in\widehat {\mathcal L}_0^I$ and $\alpha\in\mathbb{R}$ is the exponent of integration of $\widehat f$ (or an arbitrary real number if $\widehat f$ convergent), is called the space of \emph{integrally asymptotic transseries}.

An \emph{integral section} is any coherent section $\mathbf s:\widehat{\mathcal I}\to \mathcal G_{AN}$ which, to every generator $\widehat{g}(x)=x^{\alpha}\widehat f(\boldsymbol\ell)$ of $\widehat{\mathcal{I}}$, associates $\mathbf{s}(\widehat{g})=x^{\alpha}f(\boldsymbol{\ell})$, where 
$f\in\mathcal{G}_{AN}$ 
is the integral sum of $\widehat f$ given by \eqref{eq:joj}. 
\end{defi}
Note that the germ $f$ is unique only up to an additive constant (Remark~\ref{rem:uniqLo}). Likewise, the integral sections $\mathbf s$ are \emph{linear} mappings \emph{only up to an additive constant}.
\smallskip 

\section{Embedding in a one-parameter flow}

\label{sec:embedding} In this section (Propositions~\ref{def:formfatou}
and \ref{prop:exi}) we discuss the close relationship between the existence
of a (formal) Fatou coordinate for a germ $f\in\mathcal G_{AN}$ (resp.
$\widehat{f}$) and an embedding of $f\in\mathcal G_{AN}$ (resp. 
$\widehat{f}$) in a (formal) one-parameter flow.  For the explicit
relation, consult also the constructive proofs of propositions in
the Appendix.


\begin{defi}[One-parameter local flow, standard definition] Let $a$
 and $b$ be two continuous functions defined on $(0,d)$ such that $a(x)<0<b(x)$. A family $\{f^{t}\}_{t\in (a(x),b(x))}$
of functions defined on some open interval $(0,d)$, $d>0$, is called a\emph{ one-parameter
flow on $(0,d)$} if: 
\begin{enumerate}
\item $f^{0}(x)=x$, $x\in (0,d)$,
\item If $s\in (a(x),b(x))$ and $t\in (a(f^{s}(x)),b(f^{s}(x)))$ then $t+s\in (a(x),b(x))$ and $f^{t+s}(x)=f^{t}(f^{s}(x)),\ x\in(0,d).$ 
\end{enumerate}
\noindent We say that the one-parameter flow $\{f^{t}\}$ is a \emph{$C^{1}$-flow}
if the mapping $t\mapsto f^{t}(x)$ is of class $C^{1}\big((a(x),b(x))\big)$,
for every $x\in(0,d)$. \smallskip{}

Let $f\in \mathcal G_{AN}$.
 We say that
$f$ \emph{embeds as the time-one map in a flow $\{f^{t}\}$} if $f^{1}=f$ $($that is, if $f^1$ exists and is equal to $f$ on some interval $(0,d))$.
\end{defi}

\begin{defi}[Formal one-parameter flow, see \cite{mrrz2}] We say
that a one-parameter family $\{\widehat{f}^{t}\}_{t\in\mathbb{R}}$,
$\widehat{f}^{t}\in\widehat{\mathcal{L}}$, is a \emph{$C^{1}$-formal
flow} if: 
\begin{enumerate}
\item $\widehat{f}^{0}=\mathrm{id},\ \widehat{f}^{t+s}=\widehat{f}^{t}\circ\widehat{f}^{s}$,\ \ $s,\, t\in\mathbb{R}$, 
\item $S:=\cup_{t\in\mathbb{R}}\mathrm{Supp}(\widehat{f}^{t})$ is a well-ordered
subset of $\mathbb{R}_{>0}\times\mathbb{Z}$, 
\item $t\mapsto[\widehat f^{t}]_{(\alpha,m)}$ is of the class $C^{1}(\mathbb{R})$,
for every $(\alpha,m)\in S$. 
\end{enumerate}
Here, $[\widehat f^{t}]_{(\alpha,m)}$ denotes the coefficient (which is a
function of $t$) of the monomial $x^{\alpha}\boldsymbol{\ell}^{m}$
in $\widehat{f}^{t}$. Again, we say that $\widehat{f}\in\widehat{\mathcal{L}}$
embeds in the flow $\{\widehat{f}^{t}\}_{t\in\mathbb{R}}$ as the
time-one map if $\widehat{f}=\widehat{f}^{1}$. \end{defi}

In \cite{mrrz2}, we have proved that any \emph{parabolic} transseries
$\widehat{f}\in\widehat{\mathcal{L}}$ is the time-one
map of a unique $C^{1}$-flow in $\widehat{\mathcal{L}}$. The next
proposition states that, accordingly, the formal Fatou coordinate
exists and is unique. 

\begin{prop}[Existence and uniqueness of the formal Fatou coordinate]\label{def:formfatou}
Let $\widehat{f}\in\widehat{\mathcal{L}}$ be parabolic. The formal Fatou
coordinate $\widehat{\Psi}$
exists and is unique in $\widehat{\mathfrak{L}}$, up
to an additive constant. Moreover, $\widehat\Psi\in\widehat{\mathcal L}_2^\infty$. Let $\{\widehat{f}^{t}\}$ be the (unique)
$C^{1}$-flow in $\widehat{\mathcal{L}}$ in which $\widehat{f}$
embeds as the time-one map. Then: 
\begin{equation}
\widehat{\Psi}(\widehat{f}^{t}(x))-\widehat{\Psi}(x)=t,\ t\in\mathbb{R}.\label{fatou}
\end{equation}
\end{prop}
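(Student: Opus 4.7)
The plan is to construct $\widehat\Psi$ as a formal antiderivative of $1/\widehat X$, where $\widehat X$ is the infinitesimal generator of the $C^1$-flow $\{\widehat f^t\}$ provided by \cite{mrrz2}. Concretely, set $\widehat X(x):=\tfrac{d}{dt}\big|_{t=0}\widehat f^t(x)\in\widehat{\mathcal L}$; since $\widehat f^0=\mathrm{id}$ and $\widehat f=\widehat f^1$, its leading term coincides with that of $\widehat f-x$, namely $\mathrm{ord}(\widehat X)=(\alpha_1,m_1)$ with $\alpha_1>1$. Inverting $\widehat X$ blockwise by a formal geometric series yields $1/\widehat X\in\widehat{\mathcal L}_1^\infty$ of leading order $(-\alpha_1,-m_1)$. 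I then solve $\widehat\Psi'=1/\widehat X$ termwise using the formal rules
\[
(x^a\boldsymbol\ell^b)'=ax^{a-1}\boldsymbol\ell^b+bx^{a-1}\boldsymbol\ell^{b+1},\qquad(\boldsymbol\ell_2^{-1})'=-\boldsymbol\ell/x.
\]
For every block $x^a$ of $1/\widehat X$ with $a\neq0$, the coefficient recursion for the corresponding block $\sum_b A_b\boldsymbol\ell^b$ of $\widehat\Psi$ is triangular with non-vanishing diagonal $a$, hence uniquely solvable in $\widehat{\mathcal L}_1^\infty$. For the exceptional block $a=0$ (should it occur), the same recursion determines all $A_b$ for $b\neq 0$ from the coefficients $c_{0,b}$ with $b\neq 1$, while the coefficient $c_{0,1}$ of the monomial $x^{-1}\boldsymbol\ell$ is absorbed by the single term $-c_{0,1}\boldsymbol\ell_2^{-1}$. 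This is the only source of an iterated logarithm deeper than $\boldsymbol\ell$, so $\widehat\Psi\in\widehat{\mathcal L}_2^\infty$ and the only remaining freedom is the integration constant $A_{0,0}\in\mathbb R$.

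To obtain \eqref{fatou} (from which \eqref{eq:rfatou} follows by specialization to $t=1$), I differentiate $\widehat\Psi\circ\widehat f^t$ in $t$ using the Lie identity $\tfrac{d}{dt}\widehat f^t=\widehat X\circ\widehat f^t$, itself obtained by differentiating $\widehat f^{t+s}=\widehat f^s\circ\widehat f^t$ at $s=0$ in the $C^1$-flow of \cite{mrrz2}:
\[
\tfrac{d}{dt}\widehat\Psi(\widehat f^t(x))=\widehat\Psi'(\widehat f^t)\cdot\widehat X(\widehat f^t)=\tfrac{1}{\widehat X(\widehat f^t)}\cdot\widehat X(\widehat f^t)=1.
\]
Integrating in $t$ with the initial condition $\widehat f^0=\mathrm{id}$ yields $\widehat\Psi(\widehat f^t)-\widehat\Psi=t$, which at $t=1$ is the formal Abel equation, and hence proves existence of the formal Fatou coordinate together with the flow identity.

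For uniqueness in $\widehat{\mathfrak L}$ up to an additive constant, let $\widehat\Psi_1,\widehat\Psi_2\in\widehat{\mathfrak L}$ both satisfy \eqref{eq:rfatou} and put $\widehat\phi:=\widehat\Psi_1-\widehat\Psi_2$, so that $\widehat\phi\circ\widehat f=\widehat\phi$. Since $\widehat f-x=o(x)$, the formal Taylor expansion
\[
\widehat\phi(\widehat f)-\widehat\phi=\widehat\phi'\cdot(\widehat f-x)+\tfrac12\widehat\phi''\cdot(\widehat f-x)^2+\cdots
\]
is valid in $\widehat{\mathfrak L}$. If $\widehat\phi$ were non-constant, its leading monomial $cx^{a_0}\boldsymbol\ell^{a_1}\cdots\boldsymbol\ell_j^{a_j}$ (with $c\neq 0$ and some $a_k\neq 0$) would have non-zero formal derivative, and multiplication by $\widehat f-x$ (of leading order $(\alpha_1,m_1,0,\ldots)$, $\alpha_1>1$) would strictly increase the $x$-exponent by $\alpha_1-1>0$, dominating the quadratic and higher terms of the Taylor expansion. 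This produces a non-zero leading monomial on the right-hand side, contradicting $\widehat\phi\circ\widehat f-\widehat\phi=0$; hence $\widehat\phi$ is a constant. The principal obstacle in the argument is the blockwise antidifferentiation step, where one must verify that exactly one new monomial $\boldsymbol\ell_2^{-1}$ is produced and no deeper iterated logarithms ever appear; this rests precisely on the identity $\int x^{-1}\boldsymbol\ell^b\,dx=\boldsymbol\ell^{b-1}/(b-1)$, valid in $\widehat{\mathcal L}_1^\infty$ for every integer $b\neq 1$, with $b=1$ singling out $\boldsymbol\ell_2^{-1}$ as the unique new monomial required.
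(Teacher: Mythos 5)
Your overall route matches the paper's: take $\widehat\Psi$ to be the formal antiderivative of $1/\widehat\xi$ with $\widehat\xi=\frac{\mathrm d}{\mathrm dt}\widehat f^t\big|_{t=0}$, derive \eqref{fatou} by differentiating $\widehat\Psi\circ\widehat f^t$ in $t$, and prove uniqueness by Taylor-expanding the difference of two solutions. Your block-by-block triangular recursion is an explicit rendering of the paper's ``repeated formal integration by parts'', and the identification of $x^{-1}\boldsymbol\ell$ as the unique source of $\boldsymbol\ell_2^{-1}$ is the same.

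Two points need repair. First, you assert $\alpha_1>1$ for $\mathrm{ord}(\widehat X)=\mathrm{ord}(\widehat f-x)$; but a parabolic transseries in $\widehat{\mathcal L}$ only satisfies $\mathrm{ord}(\widehat f)=(1,0)$, so $\widehat f-x$ may have order $(1,m_1)$ with $m_1>0$ (Remark~\ref{rem:class} explicitly allows $\alpha=1$, $m\in\mathbb N$ in the normal form). Your uniqueness claim that multiplication by $\widehat f-x$ ``strictly increases the $x$-exponent by $\alpha_1-1>0$'' therefore fails when $\alpha_1=1$; the correct statement, which the paper uses, is that $\mathrm{ord}(\widehat g)\succ(1,0)$ lexicographically, so $\mathrm{ord}(\widehat\phi^{(k)}\widehat g^k)$ strictly increases with $k$ — via the $\boldsymbol\ell$-exponent when $\alpha_1=1$. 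Second, the step $\frac{\mathrm d}{\mathrm dt}\widehat\Psi(\widehat f^t(x))=\widehat\Psi'(\widehat f^t)\cdot\frac{\mathrm d}{\mathrm dt}\widehat f^t$ (and the Lie identity it rests on) uses the chain rule for compositions of transseries, which is not automatic: each coefficient of $\widehat\Psi\circ\widehat f^t$ is an infinite rearranged sum, and one must check that termwise $t$-differentiation is legitimate. The paper establishes this explicitly via Neumann's Lemma before applying it; your proof of \eqref{fatou} assumes it silently. Both gaps are local and fixable, but as written they are real omissions.
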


For a more precise description of the formal Fatou coordinate
of a parabolic $\widehat{f}\in\widehat{\mathcal{L}}$, see Remark~\ref{rem:class}
in the Appendix. If $\widehat f$ is moreover parabolic and Dulac, the description is provided in Proposition~\ref{prop:exformfatou}.\\

The next proposition establishes, for an analytic germ in $\mathcal G_{AN}$, the equivalence
between the existence of an analytic Fatou coordinate and the embedding
of the germ in a $C^1$-flow:

\begin{prop}\label{prop:exi}
	Let $f$ be an analytic function defined on some open interval $(0,d)$, $d>0$, such that $f(0)=0$ (in the limit sense) and $0<f(x)<x$, $x\in(0,d)$.\footnote{That is, $0$ is the only fixed point in $(0,d)$. Moreover, it is attracting.}
The following statements are equivalent:

1. there exists a Fatou coordinate
$\Psi$ for $f$ defined and analytic on $(0,d)$,

2. there exists a $C^{1}$-flow $\{f^{t}\}$, $f_t$ defined and analytic on
$(0,d)$, in which $f$ can be embedded as the time one-map, and such
that the function $\xi$ defined on $(0,d)$ by 
\[
\xi:=\frac{d}{dt}f^{t}\Big|_{t=0}
\]
is non-oscillatory\footnote{non-oscillatority means that there is no accumulation of zero points at $0$}. 

In this case, for the Fatou coordinate $\Psi$ and for the corresponding
flow $\{f^{t}\}$, the following holds: 
\begin{equation}
\Psi(f^{t}(x))-\Psi(x)=t,\ x\in(0,d),\ t\in(\mathbb{R}_+,0).\label{eq:fat2}
\end{equation}
\end{prop}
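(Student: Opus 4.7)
The plan is to establish the two implications by explicit reciprocal constructions, with \eqref{eq:fat2} arising tautologically in each direction through the change of variables $w=\Psi(x)$, which linearizes the dynamics to the unit translation.

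For (1) $\Rightarrow$ (2), given an analytic Fatou coordinate $\Psi$ on $(0,d)$, I would define
\[
f^t(x) := \Psi^{-1}\!\bigl(\Psi(x)+t\bigr)
\]
for $t$ ranging in the largest interval $(a(x),b(x))$ for which $\Psi(x)+t$ lies in the image of $\Psi$. The identities $f^0=\mathrm{id}$ and $f^{t+s}=f^t\circ f^s$ are immediate, and the Abel equation $\Psi(f(x))-\Psi(x)=1$ forces $f^1=f$. Since $\Psi$ is strictly monotonic and analytic, the zeros of $\Psi'$ are isolated in $(0,d)$; after possibly shrinking $d$ we may assume $\Psi'\neq0$ on $(0,d)$, so that $\Psi^{-1}$ is analytic and every $f^t$ is analytic in $x$. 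Differentiation then yields
\[
\xi(x) := \tfrac{d}{dt}f^t(x)\big|_{t=0} = \frac{1}{\Psi'(x)},
\]
which is analytic and in particular non-oscillatory.

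For (2) $\Rightarrow$ (1), the plan is to recover $\Psi$ as a primitive of $1/\xi$. The flow curves $t\mapsto f^t(x)$ satisfy the autonomous ODE $\dot u = \xi(u)$, and $\xi$ inherits analyticity on $(0,d)$: for each $t$, $f^t$ is analytic in $x$, and the difference quotients $(f^t(x)-x)/t$ converge uniformly on compact subsets of $(0,d)$ as $t\to 0$, so Weierstrass's theorem yields analyticity of the limit. By the non-oscillation hypothesis the zeros of $\xi$ do not cluster at $0$; after shrinking $d$ further, $\xi$ is non-vanishing on $(0,d)$. Fix any $x_0\in(0,d)$ and set
\[
\Psi(x) := \int_{x_0}^x \frac{dy}{\xi(y)}.
\]
Then $\Psi$ is analytic and strictly monotonic, and the chain rule gives $\tfrac{d}{dt}\Psi(f^t(x)) = \Psi'(f^t(x))\,\xi(f^t(x)) = 1$, so $\Psi(f^t(x))-\Psi(x)=t$; taking $t=1$ recovers the Abel equation and shows that $\Psi$ is a Fatou coordinate for $f$.

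The chief technical point I expect to require care is the analyticity of the generator $\xi$ under the hypotheses of (2), since the $C^1$-flow definition only furnishes $C^1$ dependence on $t$; once this is established the rest is bookkeeping. The non-oscillation hypothesis plays a single but critical role: it allows one to shrink $d$ so that $\xi$ has no zero accumulating at $0$, making $1/\xi$ locally integrable near $0$ and thereby producing an analytic $\Psi$.
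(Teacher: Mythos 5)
Your proof is correct and follows essentially the same route as the paper's: in one direction you set $f^t:=\Psi^{-1}(\Psi+t)$ and read off $\xi=1/\Psi'$, and in the other you take $\Psi$ to be a primitive of $1/\xi$ (after using non-oscillation to shrink the interval so $\xi\neq0$) and verify the Abel equation by integrating the flow ODE. If anything you are a bit more explicit than the paper about why $\xi$ inherits analyticity from the flow (the paper simply asserts it), though the uniform convergence of the difference quotients on compacta that you invoke for Weierstrass's theorem would itself deserve a line of justification.
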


\noindent The proofs of Propositions~\ref{def:formfatou} and \ref{prop:exi},
are given in the Appendix. Also, see Remark~\ref{rem:oscil} in the
Appendix for the explanation of the importance of the \emph{non-oscillatority
assumption} in Proposition~\ref{prop:exi}.\\

In the particular case of a \emph{parabolic Dulac germ} $f$, the
existence, uniqueness and the description of the Fatou coordinate for
$f$, as well as of the formal Fatou coordinate for its Dulac expansion $\widehat{f}$,
are given in Section~\ref{sec:Fatou} (in the proof of the Theorem) by means of an explicit construction.

\section{Examples}

\label{sec:examples}
\begin{example}[Examples of sectional asymptotic expansions in $\widehat{\mathfrak L}$]\label{ex:expa}~

(1) The asymptotic expansion of a Dulac germ $f\in\mathcal G_{AN}$ is unique and equal to its Dulac expansion:
\[
\widehat{f}=\sum_{i=1}^{\infty}x^{\alpha_{i}}P_{i}(\boldsymbol{\ell})\in\widehat{\mathcal L},
\]
where $(\alpha_{i})_{i}$ is an increasing sequence of finite type (Definition~\ref{def:fgs}) and
$P_{i}$ is a sequence of polynomials. Indeed, since $P_i$ are polynomials (finite sums of powers of $\boldsymbol\ell$), the asymptotic expansion $\widehat f$ of a Dulac germ $f$ is unambigously defined by the standard Poincar\' e algorithm.
\medskip

(2)  	The time-one map $f\in\mathcal G_{AN}$ of the vector field $X=\xi(x)\frac{d}{dx},\ \xi(x)=x^2\boldsymbol\ell(x)^{-1},$ admits a unique Poincar\' e asymptotic expansion $\widehat f\in\widehat{\mathcal L}$ given by the formal exponential $\widehat f(x)=\mathrm{Exp}\big(x^2\boldsymbol\ell^{-1}\frac{d}{dx}\big).\mathrm{id}$. It is not transfinite, since every power of $x$ in the expansion is multiplied by only finitely many powers of $\boldsymbol\ell$.

A Fatou coordinate $\Psi\in\mathcal G_{AN}$ of $f$ is computed by 
\begin{equation}\label{ha}
\Psi(x)=\int_d^x \frac{1}{\xi(x)}\,dx=\int_d^x x^{-2}\boldsymbol\ell(x)\,dx, \ d>0.\nonumber
\end{equation}
On the other hand, a formal Fatou coordinate of its expansion $\widehat f$ can be computed as a formal antiderivative in $\widehat{\mathfrak L}$ of $x^{-2}\boldsymbol\ell$ without constant term\footnote{By \emph{formal antiderivative in $\widehat{\mathfrak L}$} of a transseries $\widehat f\in\widehat{\mathfrak L}$ \emph{without constant term} we mean a transseries from $\widehat{\mathfrak L}$ obtained by integrating $\widehat f$ monomial by monomial, with constant of integration equal to $0$.}: $$\widehat\Psi(x)=\int  \frac{dx}{\xi(x)}=\int x^{-2}\boldsymbol\ell\,dx,$$ see the proof of Proposition~\ref{def:formfatou}. By integration by parts,
$$
\widehat\Psi(x)=-x^{-1}\sum_{n=1}^{\infty} n! \boldsymbol\ell^{n}.
$$
The above series $\widehat T(\boldsymbol\ell):=\sum_{n=1}^{\infty} n! \boldsymbol\ell^{n}$ is a divergent series in $\widehat{\mathcal L}_0$. There are many ways to find a germ $T\in\mathcal G_{AN}$ which admits $\widehat T$ as its power asymptotic expansion. We choose one particular sum, the \emph{integral sum} from Definition~\ref{defi}, which is uniquely defined up to a term $Cx$. That is, we choose an \emph{integral} section $\mathbf s$. By Definition~\ref{def:is},
$$
T:=\frac{\int_d^x x^{-2}\boldsymbol\ell(x)\,dx}{-x^{-1}}\in\mathcal G_{AN},\ d>0.
$$
Due to the choice of $d>0$, we get integral sums differing by a constant.
Up to a constant, $\Psi$ is the sectional sum of $\widehat\Psi$ with respect to the integral section $\mathbf s$ and $\widehat\Psi$ is the Poincar\' e asymptotic expansion of $\Psi(x)=-x^{-1}T(x)\in\mathcal G_{AN}$.
\medskip

(3) Let $\mathbf s$ be a coherent section as defined in Definition~\ref{remi}. The following examples of germs in $\mathcal G_{AN}$ admit sectional asymptotic expansions with respect to $\mathbf s$ in  $\widehat {\mathfrak L}$:
\[
f(x):=F(x,\boldsymbol{\ell},\boldsymbol{\ell}_{2}),\  g(x):=G(x,\frac{x}{\boldsymbol\ell}),
\]
where $F$ is a germ at the origin of a function of three variables analytic at $(0,0,0)$ and $G$ a germ at the origin of a function of two variables analytic at $(0,0)$.

Let us show the statement for $f$ (for $g$ analogously). For $y,z$ small enough, the germ $x\mapsto F(x,y,z)$ is analytic at $0$. Therefore, for every $n\in\mathbb{N}$, we have:
\[
F(x,y,z)=\sum_{k=0}^{n}g_{k}(y,z)x^{k}+R_{n}(x,y,z),
\]
where $x\mapsto R_{n}(x,y,z)$ is analytic, for \emph{fixed} small
$y,z$. Take the \emph{Lagrange form of the remainder} $R_{n}(x,y,z)$,
for $y,\, z$ fixed: 
\[
R_{n}(x,y,z)=\int_{0}^{x}\partial_{t}^{n+1}F(t,y,z)\frac{(x-t)^{n}}{n!}dt
\]
Since $F$ is analytic in $y,z$, we get that $R_{n}$ is analytic
in the three variables, and, moreover, we get the following \emph{uniform}
bound in $y,\, z$: 
\[
|R_{n}(x,y,z)|\leq C\frac{x^{n+1}}{(n+1)!},
\]
on some small ball around $(0,0,0)$. Since $R_{n}$ is analytic in
$y,\, z$, $n\in\mathbb{N}$, we conclude that the functions $g_{k}(y,z)$
are also analytic in $y,\, z$, for all $k=0,\ldots,n$, and we
can repeat the same procedure for their expansions at the next level.
Finally, $f$ admits the unique sectional asymptotic expansion with respect to $\mathbf s$ equal to 
$\widehat F(x,\boldsymbol\ell,\boldsymbol\ell_2)\in \widehat{\mathcal L}_2 ,
$
where $\widehat F$ is the Taylor expansion of $F$ at $(0,0,0)$.
\medskip

As a simple example of this type of germ, take e.g. 
$$
f(x)=\frac{1}{1-\frac{x}{1-\frac{\boldsymbol{\ell}}{1-\boldsymbol{\ell}_{2}}}}.
$$
Its sectional asymptotic
expansion in $\widehat{\mathcal{L}}_{2}$ with respect to $\mathbf s$ is given by the transseries: 
$$
\widehat{f}=\sum_{k=0}^{\infty}x^{k}\Big(1-\frac{\boldsymbol{\ell}}{1-\boldsymbol{\ell}_{2}}\Big)^{-k}=\sum_{k=0}^{\infty}\sum_{l=0}^{\infty}\sum_{r=0}^{\infty}{-k \choose l}{-l \choose r}(-1)^{l+r}x^{k}\boldsymbol{\ell}^{l}\boldsymbol{\ell}_{2}^{r}.
$$
\end{example}

\section{Proof of the Theorem}\label{sec:Fatou}

The proof of the Theorem is constructive. 
Before proving the Theorem, we give an example illustrating our construction of the Fatou coordinate.
\begin{example}\label{ex:exampleconstruction} Take a Dulac germ $f\in\mathcal G_{AN}$ with the expansion:
$$
\widehat f(x)=x-x^2\boldsymbol\ell^{-1}+o(x^3)
$$The algorithm which will be described in this section is a block-by-block construction of the formal Fatou coordinate $\widehat \Psi\in\widehat{\mathfrak L}$ satisfying formally in $\widehat{\mathfrak L}$ the Fatou equation:
$$
\widehat \Psi \big(x-x^2\boldsymbol\ell^{-1}+o(x^3)\big)-\widehat\Psi(x)=1.
$$
\emph{Blocks} of a transseries from $\widehat{\mathfrak L}$ are defined in Definition~\ref{def:eljot}.
Let $\widehat\Psi_1$ be the leading block of $\widehat\Psi$, that is, $\widehat\Psi=\widehat\Psi_1+h.o.b.$ Here, $h.o.b.$ stands for \emph{blocks of strictly higher order $($in $x)$}. Applying the formal Taylor expansion, the lowest-order block on the left-hand side is $-\widehat\Psi_1'(x)\cdot x^2 \boldsymbol\ell^{-1}$, so it should equal $1$ on the right-hand side. Therefore, $\widehat\Psi_1$ is given as the 
formal antiderivative of $-x^2\boldsymbol\ell$ in $\widehat {\mathfrak L}$ without constant term (see footnote on p. \pageref{ha}):
\begin{equation}\label{eq:pssi}
\widehat\Psi_1(x)=\int\frac{dx}{x^{2}\log x}.
\end{equation}
To continue, put $\widehat\Psi=\widehat\Psi_1+\widehat R$ in the Fatou equation and repeat the procedure for the following blocks.
By formal integration by parts, we see that 
\begin{equation}\label{eq:exfatou}
\widehat{\Psi}_1(x)=x^{-1}\sum_{n=1}^{\infty}n!\boldsymbol{\ell}^{n}.
\end{equation}
For every $\boldsymbol{\ell}\in(0,d)$, the numeric series $\sum_{n=1}^{\infty}n!\boldsymbol{\ell}^{n}$
is divergent (and the formal series $\widehat f(\boldsymbol\ell)=\sum_{n=1}^{\infty}n!\boldsymbol{\ell}^{n}$ is Borel summable). However, following our construction, this series
is \emph{uniquely real summable}. That is, we also perform a parallel block by block construction on the level of germs. Let 
\[
\Psi_1(x):=\int_{d}^{x}\frac{dt}{t^{2}\log t},\ d>0,
\]
be an analytic analogue of $\widehat\Psi_1$ from \eqref{eq:pssi}. Note that different choices of $d>0$ lead to germs $\Psi_1$ differing by a constant.
Obviously, $\Psi_1\in\mathcal G_{AN}$ and therefore the integral sum  $f(\boldsymbol\ell):=e^{-1/\boldsymbol{\ell}}\Psi_1(e^{-1/\boldsymbol{\ell}})$   of $\widehat{f}$ belongs to $\mathcal G_{AN}$. 
By Proposition~\ref{prop:rational_fct}, the power asymptotic expansion of $f\in\mathcal G_{AN}$ is equal to $\widehat{f}$. This procedure is formalized in Definition~\ref{defi}. Consequently, the Poincar\' e asymptotic expansions of $\Psi_1$ are, up to additive constants, equal to the formal series $\widehat\Psi_1$.
\end{example}
\medskip

\subsection{The proof of the Theorem}\

\emph{} Point $1.$ has already been proven in Proposition~\ref{def:formfatou}.
We now give a constructive proof of the existence of $\widehat\Psi\in\widehat{\mathcal L}_2^\infty$ from 1. As illustrated in Example~\ref{ex:exampleconstruction}, simultaneously with formal Fatou coordinate $\widehat\Psi$, we construct a Fatou coordinate $\Psi\in\mathcal G_{AN}$ from point $2.$ and prove the relation between $\widehat\Psi$ and $\Psi$ from $3.$ and $4$. We follow in large part the construction of the Fatou coordinate for parabolic
diffeomorphisms as explained, for example, in \cite{loray}.
\medskip

We proceed in four steps:

\emph{Step 1.} In Subsection~\ref{sec:subzero}, we construct the formal
Fatou coordinate $\widehat{\Psi}$, by solving \emph{block by block}
the formal Abel equation. By \emph{block}, we mean the (formal) sum
of all the monomials of $\widehat{f}$ which share a common power
of $x$.  We also get the precise form \eqref{eq:ffd} of $\widehat\Psi$. Recall that the formal Fatou coordinate is unique by Proposition~\ref{def:formfatou}.

Simultaneously, we provide the ``block by block'' construction of a
Fatou coordinate $\Psi\in\mathcal G_{AN}$, where the germ for each block (from $\mathcal G_{AN}$) is represented
by an integral. We prove that each formal block is the Poincar\' e asymptotic expansion of the corresponding integrally defined germ from $\mathcal G_{AN}$, at least up to a constant term (Remark~\ref{rem:uniqLo}).


Additionally, we control the support of the formal Fatou coordinate, and prove that
the powers of $x$ it contains belong to a \emph{finitely generated\footnote{Every element is a linear combination over positive integers of finitely many generators of the lattice.}\label{haa}
lattice
}.

\emph{Step 2.} (see Subsection~\ref{sec:subone}). The control of the support
obtained in the previous step allows to conclude that the principal
part $\widehat{\Psi}_{\infty}$ of the Fatou coordinate is obtained
after finitely many steps of the ``block by block'' algorithm. We
prove also that the principal part $\widehat{\Psi}_{\infty}$ is the sectional asymptotic expansion  in
$\widehat{\mathfrak L}$ with respect to any integral section $\mathbf s$ of the  principal part $\Psi_{\infty}$, up to a constant term depending on the choice of the integral section. That is, depending on the choice of constants of integration in the integrals for the infinite part.

\emph{Step 3.} In Subsection~\ref{sec:subtwo}, we solve the modified
Abel equation \eqref{eq:modi} for the remaining infinitesimal part
of the Fatou coordinate $\widehat{R}$. The infinitesimal part $R$
is obtained directly from the equation in the form of a \emph{convergent}
series, following the method explained in \cite{loray}. We prove
in Section~\ref{sec:subfour} that the formal infinitesimal part $\widehat R$
of $\widehat{\Psi}$ obtained blockwise (in countably many steps) is indeed the sectional asymptotic expansion in $\widehat{\mathcal{L}}$ with respect to any integral section $\mathbf s$
of the infinitesimal part $R$ of $\Psi$.

\emph{Step 4.} Finally, in Subsection~\ref{sec:subthree}, we prove
the uniqueness of the formal Fatou coordinate $\widehat{\Psi}\in\widehat{\mathfrak{L}}$ up to an additive constant.
Furthermore, we prove the uniqueness, up to an additive constant, of the Fatou coordinate $\Psi\in \mathcal G_{AN}$
admitting an integral
sectional asymptotic expansion in $\widehat{\mathfrak L}$. Moreover, for a fixed constant term in $\widehat\Psi\in\widehat{\mathcal{L}}_{2}^{\infty}$ and for a fixed integral section $\mathbf s$, any such Fatou coordinate $\Psi\in \mathcal G_{AN}$ constructed in the proof admits $\widehat{\Psi}$ as its sectional asymptotic expansion with respect to $\mathbf s$, up to an additive constant.

If we change the integral section $\mathbf s$, the sectional asymptotic expansion with respect to $\mathbf s$ of a fixed\textsc{} Fatou coordinate $\Psi\in \mathcal G_{AN}$ changes only by an additive constant.

\subsubsection{\bf{The Fatou coordinate and the control of the support}.}

\label{sec:subzero}\

\medskip

Let 
\[
f(x)=x-x^{\alpha_{1}}P_{1}(\boldsymbol{\ell}^{-1})-x^{\alpha_{2}}P_{2}(\boldsymbol{\ell}^{-1})+o(x^{\alpha_{2}})
\]
be a parabolic Dulac germ. Here, $1<\alpha_{1}<\alpha_{2}<\ldots$
is an increasing sequence of finite typeor a strictly increasing sequence tending to $+\infty$, whose elements
belong to a finitely generated (see the footnote$^{5}$) sub-semigroup of $(\mathbb{R}_{>0},+)$
and $P_{i}$ are \emph{polynomials}. Let $\widehat{f}\in\widehat{\mathcal{L}}$
be its Dulac expansion. \\

We construct the formal Fatou coordinate $\widehat{\Psi}$ satisfying
the formal Abel equation 
\begin{equation}
\widehat{\Psi}(\widehat{f})-\widehat{\Psi}=1\label{eq:ab}
\end{equation}
\emph{block by block} and we control its support. By one \emph{block},
we mean the sum of all monomials sharing a common power of $x$, as defined in Definition~\ref{def:eljot}. We
call the power of $x$ of a block the \emph{order of the block.} In
each step, we consider the construction from two sides:

1. the side of formal transseries, and

2. the side of analytic germs in $\mathcal G_{AN}$.

\noindent Let us now describe the induction step. Put 
\[
\widehat{\Psi}=\widehat{\Psi}_{1}+\widehat{R}_{1},
\]
where $\widehat{\Psi}_{1}$ represents the lowest-order block of $\widehat\Psi$. Since we search for a solution $\widehat{\Psi}$ in
$\widehat{\mathfrak{L}}$, and since $\widehat{f}$ is parabolic,
we can expand the Abel equation \eqref{eq:ab} using Taylor expansions:
\[
\widehat{\Psi}'\cdot\widehat{g}+\frac{1}{2!}\widehat{\Psi}''\cdot\widehat{g}^{2}+\cdots=1,
\]
where $\widehat{g}=\mathrm{id}-\widehat{f}=x^{\alpha_{1}}P_{1}(\boldsymbol{\ell}^{-1})+x^{\alpha_{2}}P_{2}(\boldsymbol{\ell}^{-1})+\cdots$.
The term $(\widehat{\Psi}_{1})'\cdot x^{\alpha_{1}}P_{1}(\boldsymbol{\ell}^{-1})$
is the block of the strictly lowest order on the left-hand side, so
it should equal $1$ on the right-hand side. Therefore, 
\[
(\widehat{\Psi}_{1})'=\frac{x^{-\alpha_{1}}}{P_{1}(\boldsymbol{\ell}^{-1})}=x^{-\alpha_{1}}Q_{1}(\boldsymbol{\ell}).
\]
Here, $Q_{1}$ is a rational function. We obtain the formal antiderivative
$\widehat{\Psi}_{1}^{\infty}\in\widehat{\mathcal{L}}_{2}^{\infty}$ by expanding
the integral \emph{formally, using integration by parts}, without adding constant term, as explained in the footnote on p. \pageref{ha}
(see Proposition~\ref{prop:rational_fct} in the Appendix).

On the other hand, we define: 
\[
\Psi_{1}(x):=
\int_{d}^{x}t^{-\alpha_{1}}Q_{1}(\boldsymbol{\ell})\, dt,\ d>0.
\]
Obviously, $\Psi_{1}\in\mathcal G_{AN}$. 
Note that $\alpha_1>1$ or $(\alpha_{1}=1,\ $\footnote{Here and in the sequel, by $\mathrm{ord}(Q_1)$ we mean the order (as defined after Definition~\ref{def:eljot}) of the power series $\widehat Q_1(\boldsymbol\ell)$ in $\boldsymbol\ell$, which is the power asymptotic expansion of the rational function $Q_1$ at $\boldsymbol\ell=0$.} $\text{ord}(Q_{1})\leq 1)$, so $\Psi_1(x)\to\infty,$ as $x\to 0$. Note that $\Psi_1$ is unique only up to an additive constant (free choice of $d>0$). In the sequel, this will be the case for infinite blocks (i.e. which tend to $\infty$, as $x\to 0$). On the other hand, infinitesimal blocks (which tend to $0$, as $x\to\infty$) will give unique germs.

By Proposition~\ref{prop:rational_fct} in the Appendix,
$\widehat{\Psi}_{1}$ is the sectional asymptotic expansion in $\widehat{\mathcal{L}}_{2}^{\infty}$
of $\Psi_{1}\in\mathcal{G}_{AN}$ with respect to any integral section, up to an additive constant.

The Abel equation for $R_{1}$ becomes: 
\begin{equation}
R_{1}(f(x))-R_{1}(x)=1-\big(\Psi_{1}(f(x))-\Psi_{1}(x)\big)=1-\int_{x}^{f(x)}t^{-\alpha_{1}}Q_{1}(\boldsymbol{\ell})\, dt.\label{eq:diff}
\end{equation}
Let us denote by $\delta_{1}(x):=1-\int_{x}^{f(x)}t^{-\alpha_{1}}Q_{1}(\boldsymbol{\ell})\, dt$
the new right-hand side of the equation. Obviously, $\delta_{1}\in\mathcal G_{AN}$
, as a difference of analytic germs.

On the other hand, applying Taylor expansion to $\widehat{\Psi}_{1}$, the \emph{formal} Abel equation becomes: 
\begin{align}\label{eq:r11}
\widehat{R}_{1}(\widehat{f}(x))-\widehat{R}_{1}(x) & =1-\big(\widehat{\Psi}_{1}(\widehat f(x))-\widehat{\Psi}_{1}(x)\big),\\
 & =1-(\widehat{\Psi}_{1})'\widehat{g}-\frac{1}{2!}(\widehat{\Psi}_{1})''\widehat{g}^{2}+\cdots,\nonumber\\
 & =1-\frac{x^{-\alpha_{1}}}{P_{1}(\boldsymbol{\ell}^{-1})}\widehat{g}-\frac{1}{2!}\Big(\frac{x^{-\alpha_{1}}}{P_{1}(\boldsymbol{\ell}^{-1})}\Big)'\widehat{g}^{2}+\cdots=:\widehat{\delta}_{1}(x).\nonumber
\end{align}
Note that, 
due to the fact that the power of $x$ in the leading term of $\widehat f$ is strictly bigger than $1$, the Taylor formula above converges formally and is true in the formal setting. This is a classical result that can be checked in e.g. \cite{dries}. We denote the right-hand side of \eqref{eq:r11} by $\widehat\delta_1\in\widehat{\mathcal L}_1^\infty$. 
It can be checked from the above computation that the leading block
in $\widehat{\delta}_{1}$ is of order $\min\{\alpha_{1}-1,\alpha_{2}-\alpha_{1}\}$.
Similarly, we have $\delta_{1}=O(x^{\min\{\alpha_{2}-\alpha_{1},\alpha_{1}-1\}-\varepsilon})$,
for every $\varepsilon>0$. 
By parallel constructions, $\widehat\delta_1$ is the sectional asymptotic expansion of $\delta_1$ with respect to a coherent section (series in $\boldsymbol\ell$ are convergent).\\

Furthermore, it can be seen that $\widehat{\delta}_{1}$ consists
of blocks of the type $x^{\beta}R(\boldsymbol{\ell}^{-1})$, where
$R$ is a rational function whose denominator can only be a \emph{positive}
integer power of the polynomial $P_{1}(\boldsymbol\ell^{-1})$, and $\beta$ belongs to
the set: 
\[
\mathcal{R}_{1}:=\{(\alpha_{n_{1}}+\cdots+\alpha_{n_{k}}-k)-(\alpha_{1}-1),\ k\in\mathbb{N}\},
\]
where $\alpha_{n_{i}}>1$ are powers of $x$ in $\widehat{f}$. Since
the sequence $(\alpha_{i})_{i}$ is finitely generated, the set $\mathcal{R}_{1}$
belongs to a finitely generated lattice. In particular, it is well-ordered by the standard order $<$ on $\mathbb R$.
\\

Now, we repeat the same procedure of elimination for germs (\emph{resp}.
for formal series), with right-hand side $\delta_{1}$ (\emph{resp}.
$\widehat{\delta}_{1}$) instead of $1$. We put $\widehat{R}_{1}=\widehat{\Psi}_2^{\infty}+\widehat{R}_{2}$.
To eliminate the first block from $\widehat{\delta}_{1}$, say $x^{\beta}R(\boldsymbol{\ell}^{-1})$,
we take: 
\begin{equation}\label{eq:psi2}
(\widehat{\Psi}_{2})'=\frac{x^{\beta}R(\boldsymbol{\ell}^{-1})}{x^{\alpha_{1}}P_{1}(\boldsymbol{\ell}^{-1})}=x^{\beta-\alpha_{1}}Q_{2}(\boldsymbol{\ell}),
\end{equation}
where $Q_{2}$ is a rational function.

In the same step, we define the germ $\Psi_2\in\mathcal G_{AN}$:
\begin{equation}
\Psi_{2}(x):=\int_{d}^{x}t^{-(\alpha_{1}-\beta)}Q_{2}(\boldsymbol{\ell}(t))\, dt,
\label{eq:psi}
\end{equation}
where $d>0$ if $\alpha_{1}-\beta>1\text{ or }(\alpha_{1}=\beta+1,\ \text{ord}(Q_{2})\leq1)$, and $d=0$ if $\alpha_{1}-\beta>1\text{ or }(\alpha_{1}=\beta+1,\ \text{ord}(Q_{2})\leq1)$. Obviously, $\Psi_{2}^{\infty}\in\mathcal G_{AN}$. Note that
there are no new singularities created in $Q_{2}(\boldsymbol{\ell})$, since
the denominator of $\frac{R(\boldsymbol{\ell}^{-1})}{P_{1}(\boldsymbol{\ell}^{-1})}$
is just a positive integer power of $P_{1}(\boldsymbol{\ell}^{-1})$.

Note here that, depending on the order of the right-hand side in \eqref{eq:psi2} (that is, depending on the step of the algorithm), in \eqref{eq:psi} we get either $\Psi_2(x)\to\infty$ (infinite block) or $\Psi_2(x)=o(1)$ (infinitesimal block), as $x\to 0$. In the former case, we may choose the constant in $\Psi_2$ arbitrarily (any small $d>0$), while in the latter we put $d=0$ in order to have $\Psi_2(0)=0$ and thus to avoid constant terms in the infinitesimal part of the Fatou coordinate. 
 We will show below that there are only finitely many infinite steps, but at most countably many infinitesimal steps. 

Repeating the same procedure, we conclude that the blocks in $\widehat{\delta}_{2}$ are of the form $x^{\beta}R(\boldsymbol{\ell}^{-1})$,
where $R$ is again a rational function whose denominator can only be a \emph{positive}
integer power of the polynomial $P_{1}(\boldsymbol\ell^{-1})$, and 
and 
\[
\beta\in\mathcal{R}_{2}:=\{(\alpha_{n_{1}}+\cdots+\alpha_{n_{k}}-k)-2(\alpha_{1}-1),\ k\in\mathbb{N},\ k\geq2\}\cup\mathcal{R}_{1}.
\]
At the $r$-th step of this procedure, the powers of $x$ in $\widehat{\delta}_{r}$
are: 
\[
\beta\in\mathcal{R}_{r}:=\cup_{p\in\mathbb{N},\, p\leq r}\{(\alpha_{n_{1}}+\cdots+\alpha_{n_{k}}-k)-p(\alpha_{1}-1),\ k\in\mathbb{N},\ k\geq p\}.
\]
Therefore, the monomials appearing in the algorithm on the right-hand
side of the modified Abel equation are always of the form $x^{\beta}R(\boldsymbol{\ell}^{-1})$,
where 
\begin{align*}
\beta&\in\mathcal{R}:=  \bigcup_{r\in\mathbb{N}}\{(\alpha_{n_{1}}+\cdots+\alpha_{n_{k}}-k)-r(\alpha_{1}-1)|\ k\in\mathbb{N},\ k\geq r\}=\\
= & \{(\alpha_{n_{1}}-\alpha_{1})+\cdots+(\alpha_{n_{r}}-\alpha_{1})+(\alpha_{n_{r+1}}-1)+\cdots+(\alpha_{n_{k}}-1),\ r\leq k,\ r,\, k\in\mathbb{N}\}.
\end{align*}
That is, the set $\mathcal{R}$ is the set of \emph{all} finite sums of nonegative elements of the form $(\alpha_{i}-\alpha_{1})$ and
$(\alpha_{i}-1)$, where $\alpha_{i}\geq1$, $i\in\mathbb{N}$, is
the sequence of powers of $x$ in the Dulac expansion $\widehat{f}(x)$.
Since $(\alpha_{i})_{i}$ belong to a finitely generated lattice,
it is the same for the elements of $\mathcal{R}$. In particular,
$\mathcal{R}$ is well-ordered. Its order type is $\omega$, and its
elements form a sequence tending to $+\infty$. Since all the powers
of $x$ in the common support of all the right-hand sides $\delta$
in the course of the algorithm belong to $\mathcal{R}$, they can
either be ordered in an \emph{infinite strictly increasing sequence
tending to $+\infty$} or there are \emph{only finitely many} of them.
In the latter case, the block by block algorithm terminates in finitely
many steps. Otherwise, it needs $\omega$ steps to terminate. In any
case, the construction by blocks of the formal Fatou coordinate is
not transfinite, as it terminates in at most $\omega$ steps.

Furthermore, thanks to the direct relation of $\widehat{\Psi}_{r}$
and the leading block of $\widehat{\delta}_{r}$ described in \eqref{eq:psi2},
and by Proposition~\ref{prop:rational_fct}, we also see that the
support of $\widehat{\Psi}$ is well-ordered.

\subsubsection{\bf{The principal (\emph{infinite}) part of the Fatou coordinate}}

\label{sec:subone}\
 Let $\alpha_{1}>1$ be such that $\text{ord}(\mathrm{id}-\widehat{f})=(\alpha_{1},m)$,
$m\in\mathbb{Z}^{-}$, as above. We have proved in Section~\ref{sec:subzero}
that the orders of the blocks on the right-hand sides of the Fatou equation in the
course of eliminations belong to a finitely generated lattice. The
order of the leading block on the right-hand side in every step strictly
increases. Therefore, it follows that after \emph{finitely many steps}
of block by block eliminations, the order of the right-hand side $\widehat{\delta}$
becomes strictly bigger than $\alpha_{1}-1$.

We denote by $r_{0}\in\mathbb{N}$ the smallest number such that,
after $r_{0}$ steps, the Abel equation becomes: 
\[
R_{r_{0}}(f(x))-R_{r_{0}}(x)=o(x^{\alpha_{1}-1}).
\]

The $r_{0}$-th step is the \emph{critical step} between the infinite
and the infinitesimal part of the Fatou coordinate. That is: 
\[
R_{r_{0}-1}(x)\to\infty,\ R_{r_{0}}(x)\to 0 \text{ as }x\to0.
\]
This is a direct consequence of the following Proposition~\ref{prop:aux}:

\begin{prop}[Order of the blocks in the algorithm]\label{prop:aux}
Let $\beta\in\mathcal{R}$ be the order of the leading block of the
right-hand side $\widehat{\delta}_{k}\in\widehat{\mathcal{L}}$ of
the equation 
\[
\widehat{R}(\widehat{f})-\widehat{R}=\widehat{\delta}_{k}.
\]
Then the leading block $\widehat{\Psi}_{k}\in\widehat{\mathcal{L}}_{2}^{\infty}$ of $\widehat R$ is
a block of order $\beta-(\alpha_{1}-1)$. Moreover, 

a) if $\beta>\alpha_{1}-1$, then $\widehat{\Psi}_{k}\in\widehat{\mathcal{L}}$, 

b) if $\beta<\alpha_{1}-1$, then $\widehat{\Psi}_{k}\in\widehat{\mathcal{L}}^{\infty}$, 

c) if $\beta=\alpha_{1}-1$, then $\widehat{\Psi}_{k}\in\widehat{\mathcal{L}}_{2}^{\infty}$. 

\end{prop}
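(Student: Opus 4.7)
From the block-by-block construction in Subsection~\ref{sec:subzero} (see formula \eqref{eq:psi2}), the leading block $\widehat{\Psi}_{k}$ of $\widehat{R}$ is characterized by
$$(\widehat{\Psi}_{k})'= x^{\beta-\alpha_{1}}\,Q_{k}(\boldsymbol{\ell}),\quad Q_{k}(\boldsymbol{\ell})=\frac{R(\boldsymbol{\ell}^{-1})}{P_{1}(\boldsymbol{\ell}^{-1})},$$
a rational function of $\boldsymbol{\ell}$ whose Laurent expansion at $\boldsymbol{\ell}=0$ has finitely many negative-power terms and an infinite tail in the non-negative integer powers of $\boldsymbol{\ell}$. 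Setting $\gamma:=\beta-\alpha_{1}$, the whole proposition reduces to computing the formal primitive of $x^{\gamma}\boldsymbol{\ell}^{n}$, $n\in\mathbb{Z}$, without constant of integration, and summing the results against the Laurent coefficients of $Q_{k}$. Any such primitive, being of the form $x^{\gamma+1}\cdot(\ldots)$ or $x^{0}\cdot(\ldots)$, immediately yields the order claim $\beta-(\alpha_{1}-1)=\gamma+1$.

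For cases (a) and (b), i.e.\ when $\gamma+1\neq 0$, the key identity $\boldsymbol{\ell}'=\boldsymbol{\ell}^{2}/x$ gives
$$\bigl(x^{\gamma+1}\boldsymbol{\ell}^{n}\bigr)'=(\gamma+1)\,x^{\gamma}\boldsymbol{\ell}^{n}+n\,x^{\gamma}\boldsymbol{\ell}^{n+1},$$
which produces the integration-by-parts recursion
$$\int x^{\gamma}\boldsymbol{\ell}^{n}\,dx = \frac{x^{\gamma+1}\boldsymbol{\ell}^{n}}{\gamma+1}-\frac{n}{\gamma+1}\int x^{\gamma}\boldsymbol{\ell}^{n+1}\,dx.$$
Iterated formally, this yields
$$\int x^{\gamma}\boldsymbol{\ell}^{n}\,dx = x^{\gamma+1}\sum_{j\ge 0}(-1)^{j}\frac{n(n+1)\cdots(n+j-1)}{(\gamma+1)^{j+1}}\,\boldsymbol{\ell}^{n+j},$$
a (generally divergent) series with \emph{only integer powers} of $\boldsymbol{\ell}$. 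Applying this to every monomial in the Laurent expansion of $Q_{k}$ and summing, $\widehat{\Psi}_{k}$ becomes a single block of the shape $x^{\gamma+1}S(\boldsymbol{\ell})$ with $S$ an integer-power Laurent-type series. In case (a), $\gamma+1>0$, so the block is infinitesimal and $\widehat{\Psi}_{k}\in\widehat{\mathcal{L}}$; in case (b), $\gamma+1<0$, so the block is not infinitesimal, placing it in $\widehat{\mathcal{L}}^{\infty}$.

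The main obstacle is the \emph{resonant} case (c), where $\gamma+1=0$ and the integration-by-parts recursion becomes singular. Here I would compute $\int x^{-1}\boldsymbol{\ell}^{n}\,dx$ directly via the substitution $u=\boldsymbol{\ell}$, $du=\boldsymbol{\ell}^{2}\,dx/x$, so that
$$\int x^{-1}\boldsymbol{\ell}^{n}\,dx=\int u^{n-2}\,du=\begin{cases}\dfrac{\boldsymbol{\ell}^{n-1}}{n-1}, & n\neq 1,\\[4pt] \log\boldsymbol{\ell}=-\dfrac{1}{\boldsymbol{\ell}_{2}}, & n=1,\end{cases}$$
where the last equality uses $\boldsymbol{\ell}_{2}=-1/\log\boldsymbol{\ell}$ by the definition $\boldsymbol{\ell}_{2}=\boldsymbol{\ell}\circ\boldsymbol{\ell}$. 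All contributions with $n\neq 1$ remain with integer powers of $\boldsymbol{\ell}$; the single contribution from $n=1$ introduces the monomial $\boldsymbol{\ell}_{2}^{-1}$, forcing $\widehat{\Psi}_{k}$ into $\widehat{\mathcal{L}}_{2}^{\infty}$ (which contains $\widehat{\mathcal{L}}^{\infty}$ as the subclass with zero $\boldsymbol{\ell}_{2}$-exponents, handling the subcase where $Q_{k}$ has no $\boldsymbol{\ell}^{1}$ term). Finally, one notes that no deeper iterated logarithm than $\boldsymbol{\ell}_{2}$ can appear, since each formal integration raises the depth by at most one and we integrate exactly once. This completes the three-case analysis and establishes the proposition.
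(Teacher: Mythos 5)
Your proof is correct and takes essentially the same route as the paper: the paper's own argument reduces the claim to computing the formal antiderivative $\int x^{-(\alpha_1-\beta)}\widehat{Q}(\boldsymbol{\ell})\,dx$ of a rational function of $\boldsymbol{\ell}$ times a power of $x$ and then cites Proposition~\ref{prop:rational_fct}. Your integration-by-parts recursion for $\gamma+1\neq 0$ and the direct substitution $u=\boldsymbol{\ell}$ for the resonant case $\gamma+1=0$ simply unfold inline the elementary computation (Proposition~\ref{pomoc}) on which Proposition~\ref{prop:rational_fct} rests.
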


\begin{proof} By Taylor expansion, as described in the algorithm
in Section~\ref{sec:subzero}, 
\[
\widehat{\Psi}_{k}=\int x^{-(\alpha_{1}-\beta)}\widehat{Q}(\boldsymbol{\ell})\, dx,
\]
where $\widehat{Q}$ is an asymptotic expansion of a rational function,
and $\int$ denotes the formal integral. The result now follows by
Proposition~\ref{prop:rational_fct}. \end{proof}

It follows that there exists an index $r_{0}\in\mathbb N$, called the \emph{critical
index}, such that all the infinite blocks of $\Psi$ or $\widehat \Psi$
are exactly those (finitely many) indexed by $r\leq r_{0}$. Hence we define \emph{the
principal part} of $\Psi\in \mathcal G_{AN}$ or $\widehat{\Psi}\in\widehat{\mathcal L}_2^\infty$ by: 
\begin{align*}
 & \Psi^{\infty}=\Psi_{1}+\cdots+\Psi_{r_{0}},\\
 & \widehat{\Psi}^{\infty}=\widehat{\Psi}_{1}+\cdots+\widehat{\Psi}_{r_{0}}.
\end{align*}
Note that, by the integral definition \eqref{eq:psi} (1) of infinite blocks $\Psi_i$, $i=1\ldots r_0$, due to arbitrary choices of $d>0$, $\Psi^\infty$ defined above is unique only up to an additive constant. Therefore, by Proposition~\ref{prop:rational_fct}, parallel construction and by the definition of integral sections and sectional asymptotic expansions in Section~\ref{sec:asymptotic_expansions}, 
the formal principal part $\widehat{\Psi}^{\infty}\in\widehat{\mathcal{L}}_{2}^{\infty}$
obtained by the blockwise integration by parts is the sectional asymptotic
expansion of $\Psi^{\infty}$ with respect to any integral section $\mathbf s$, up to appropriate choices of constant terms in both $\Psi^\infty$ and $\widehat\Psi^\infty$. Also, the change of the integral section $\mathbf s$ leads to the change in constant terms in $\Psi^\infty$ or in $\widehat\Psi^{\infty}$.
\medskip{}

\subsubsection{\bf{The infinitesimal part of the Fatou coordinate}}

\label{sec:subtwo}

Let $r_{0}$ be the \emph{critical index} defined at the end of Section~\ref{sec:subone}.
The germ $R=\Psi-\Psi^{\infty}$, $R\in\mathcal G_{AN}$, satisfies the difference equation:
\begin{equation}
R(f(x))-R(x)=\delta(x),\label{eq:abel}
\end{equation}
where $\delta(x)=O(x^{\gamma})$, 
with $\gamma>\alpha_{1}-1$, $\delta\in\mathcal G_{AN}$. Note that this is the first step of the
block by block algorithm for which we obtain an \emph{infinitesimal}
solution. That is, $R=o(1)$, as $x\to0$.

On the one hand, we continue solving formally block by block (expanding
integrals by integration by parts). We have already proved at the
end of Section~\ref{sec:subzero} that we terminate the formal block
by block algorithm in countably many steps: 
\begin{equation}
\widehat{R}=\sum_{i\in\mathbb{N}}\widehat{R}_{r_{0}+i}.\label{eq:er}
\end{equation}
By Proposition~\ref{prop:aux}, $\widehat{R}_{r_{0}+i}\in\widehat{\mathfrak{L}}$,
$i\in\mathbb{N}$, are blocks of strictly increasing orders, so
$\widehat{R}\in\widehat{\mathcal{L}}$.

On the other hand, in each step we get a germ $R_{r_{0}+i}\in\mathcal G_{AN},\ i\in\mathbb{N},$
which is defined by an appropriate integral in \eqref{eq:psi}, with $d=0$. Note that $R_{r_0+i}=o(1)$, as $x\to 0$, as a consequence of the fact that we have put $d=0$.

Therefore, by Proposition~\ref{prop:rational_fct}, $\widehat{R}_{r_{0}+i}$ 
is exactly the sectional asymptotic expansion of $R_{r_{0}+i}$ in $\widehat{\mathcal{L}}$ with respect to any integral section.

Instead of proving that the infinite series of analytic germs from $\mathcal G_{AN}$ is an
analytic germ from $\mathcal G_{AN}$, once that we have reached the equation for the infinitesimal
part \eqref{eq:abel}, we directly construct a germ $R\in\mathcal{G}_{AN}$, $R(x)=o(1)$, satisfying \eqref{eq:abel},
by following the classical construction explained in \cite{loray}. We
define: 
\begin{equation}
R(x):=-\sum_{k=0}^{\infty}\delta\big(f^{\circ k}(x)\big)\label{eq:sumsol}.
\end{equation}
We prove in Proposition~\ref{prop:asyproof} that the above sum with $\delta\in\mathcal G_{AN}$ and $\delta(x)=O(x^{\gamma})$, 
$\gamma>\alpha_{1}-1$,  converges for sufficiently small $d>0$ to an analytic function defined on $(0,d)$. That is, that $R$ defined by \eqref{eq:sumsol} is well-defined and $R\in\mathcal G_{AN}$. Directly
by Proposition~\ref{prop:asyproof} below, we get that $R(x)=o(1)$, as
$x\to 0$. 

Now, $\Psi=\Psi^{\infty}+R$, $\Psi\in\mathcal G_{AN}$, is a Fatou
coordinate for $f$.

\begin{prop}\label{prop:asyproof} Let $f\in\mathcal G_{AN}$ be the Dulac germ as 
above. Let $\alpha_{1}>1$ be the order of the leading block in the Dulac expansion of $\,\, \mathrm{id}-f$.
Let $\delta\in\mathcal G_{AN}$ be an analytic germ on some open interval $(0,d),\ d>0$, satisfying $\delta(x)=O(x^{\gamma}),\ \gamma>\alpha_{1}-1$, obtained in the block-by-block construction described above.
Then $h$ defined by the series 
\begin{equation}
h(x):=-\sum_{k=0}^{\infty}\delta\big(f^{\circ k}(x)\big)\label{eq:wei}
\end{equation}
belongs to $\mathcal G_{AN}$. Moreover, for every $\varepsilon>0$,
$h(x)=o(x^{\gamma-(\alpha_{1}-1+\varepsilon)})$, as $x\to0$.\end{prop}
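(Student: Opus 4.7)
The plan is to prove the result in three steps: a polynomial upper bound on the decay rate of the iterates $f^{\circ k}(x)$, absolute convergence of the series defining $h$ together with the claimed asymptotic estimate, and real-analyticity on a small interval $(0,d')$.

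\emph{Step 1 (iterate estimate).} First, I would establish that, for every $\varepsilon > 0$, there exist $d>0$ and $C_\varepsilon>0$ with
\[
f^{\circ k}(x) \le C_\varepsilon\,(k+1)^{-1/(\alpha_1-1+\varepsilon)},\quad x\in(0,d),\ k\in\N.
\]
This is a classical parabolic decay estimate: set $u_k := f^{\circ k}(x)^{-(\alpha_1-1)}$, Taylor-expand using $f(y) = y - y^{\alpha_1}P_1(\boldsymbol{\ell}^{-1}) + o(y^{\alpha_1})$ to get $u_{k+1}-u_k = (\alpha_1-1)P_1(-\log f^{\circ k}(x)) + o(1)$, and telescope. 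Since $P_1(-\log\,\cdot)$ contributes only polylogarithmic factors, these are absorbed into the $\varepsilon$-loss.

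\emph{Step 2 (convergence and asymptotic).} Combining Step 1 with $|\delta(y)| \le C y^\gamma$ gives $|\delta(f^{\circ k}(x))| \le C'(k+1)^{-\gamma/(\alpha_1-1+\varepsilon)}$. Since $\gamma > \alpha_1-1$, choosing $\varepsilon$ small enough makes this exponent strictly greater than $1$, so the series defining $h$ converges absolutely and uniformly on $(0,d)$. For the refined asymptotic, I would compare the sum with an integral via the substitution $y = f^{\circ t}(x)$, for which $-dy/dt \asymp y^{\alpha_1}P_1(-\log y)$, obtaining
\[
|h(x)| \lesssim \int_0^x \frac{y^{\gamma-\alpha_1}}{P_1(-\log y)}\,dy = O\!\left(x^{\gamma-\alpha_1+1}\right),
\]
which is $o(x^{\gamma-(\alpha_1-1+\varepsilon)})$ for every $\varepsilon > 0$, since $x^{\varepsilon}/P_1(-\log x)\to 0$ as $x\to 0$.

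\emph{Step 3 (analyticity) and main obstacle.} To upgrade $h$ to an analytic germ, I would work in a complex neighborhood. The quasi-analyticity of $f$ (Definition~\ref{def:dulac_germ}) provides an analytic extension of $f$ to a complex domain containing $(0,d)$; a standard parabolic attraction argument analogous to the petal construction in \cite{loray, loray2} produces an open set $U\subset\C$ containing some interval $(0,d')\subset\R$ with $f(U)\subset U$, on which $|f^{\circ k}(z)|$ decays at essentially the real rate from Step 1 uniformly in $z\in U$. The Weierstrass $M$-test then yields uniform convergence of the series of analytic functions $z\mapsto \delta(f^{\circ k}(z))$ on compact subsets of $U$, whence $h\in\mathcal{G}_{AN}$. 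The main obstacle is precisely this last step: one must construct an $f$-stable complex neighborhood of $(0,d')$ on which $\delta$ retains the bound $|\delta(z)|=O(|z|^\gamma)$ and the iterate estimate of Step~1 persists uniformly. Step 1 itself is routine but requires some care in tracking the logarithmic factors coming from $P_1(\boldsymbol{\ell}^{-1})$, which are harmlessly absorbed into the $\varepsilon$-slack of the final exponent.
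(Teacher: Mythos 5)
Your three-step plan follows the same route as the paper's proof. Step~1 is the content of the paper's Proposition~\ref{prop:estimate} (which the paper proves via the change of variable $w = \frac{1}{(\alpha+\varepsilon-1)x^{\alpha+\varepsilon}}$, turning $f$ into a near-translation at infinity, rather than by telescoping $u_k = f^{\circ k}(x)^{-(\alpha_1-1)}$; the two are morally the same, and the $\varepsilon$-slack absorbs the polylogarithmic factors in both). Step~2 matches the tail of the paper's proof: they bound $|h(x)|$ by $C\sum_k x^\gamma(1+\tfrac{k}{2}x^{\alpha_1-1+\varepsilon})^{-\gamma/(\alpha_1-1+\varepsilon)}$ and compare to an integral, giving the same $O(x^{\gamma-(\alpha_1-1+\varepsilon)})$ conclusion that your substitution $y=f^{\circ t}(x)$ produces. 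One small remark on Step~2: as stated, your iterate bound from Step~1 is uniform in $x$, so a naive $M$-test gives convergence but not the decay as $x\to 0$; your integral-comparison argument is what supplies the $x$-dependence, so make sure that part carries the $x$-dependent form of the iterate estimate (as the paper's Proposition~\ref{prop:estimate} does) rather than only the uniform one.

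The genuine difference is Step~3. You flag the construction of an $f$-invariant complex neighborhood of $(0,d')$ as ``the main obstacle'' and defer it to a ``standard petal argument,'' but in the paper this is precisely where the bulk of the proof lives: starting from quasi-analyticity, they restrict to a small sector $V_P$, pass to the coordinate $w = \Psi_0(z)$ where $\Psi_0$ is the leading term of the Fatou coordinate, analyze the shape of $\Psi_0(V_P)$, show that the conjugated map $\tilde f(w) = w + 1 + h(w)$ has $|h(w)|$ small (with a case split \eqref{eq:eh} depending on whether the next Dulac exponent $\alpha_2$ equals or exceeds $\alpha_1$), and use this to build an $\tilde f$-invariant sector $W_{R,\alpha_R}$ whose pullback $U = \Psi_0^{-1}(W_{R,\alpha_R})$ is the desired $f$-invariant domain; normal convergence on $U$ then comes from $|\tilde f^{\circ n}(w)| \ge C_R n$. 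So your outline is correct and your diagnosis of where the difficulty sits is accurate, but a complete proof still requires actually performing that construction — it is not a black-box citation, since the Dulac setting (logarithmic corrections, non-analyticity at $0$) requires adapting the classical petal argument through the explicit coordinate $\Psi_0$.
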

\begin{proof}
The idea is to follow the procedure described in e.g. Loray \cite{loray} for regular germs at $0$, and to apply Weierstrass theorem. To be able to apply Weierstrass theorem, we need to extend $f$ and $\delta$ from $(0,d)\subset \mathbb R^+$ analytically to an open, $f$-invariant set $U\subset\mathbb C$ in the complex plane, chosen such that the sum \begin{equation}\label{eq:cpl}h(z):=-\sum_{k=0}^{\infty}\delta\big(f^{\circ k}(z)\big)\end{equation} converges normally on $U$. Then, in particular, the sum  $-\sum_{k=0}^{\infty}\delta\big(f^{\circ k}(x)\big)$ converges for sufficiently small $x\in(0,d)$ to $h\in\mathcal{G}_{AN}$. 

The \emph{quasi-analyticity} of the Dulac germ $f\in\mathcal G_{AN}$ (see \cite[Definition 2 and Corollary 3]{ilyalim}) means that $f$ can be extended from $(0,d)$ to an  analytic function $f(z)$ defined on a standard quadratic domain $Q$, as defined in Ilyashenko \cite{ilyalim}. 

Beware that here we work in the original chart  and not in the logarithmic chart as Ilyashenko does.  More precisely,
we restrict to an open sectorial domain $V_P$ at the origin, centered at the positive real line.

We construct now an open set $U\subset V_P$ containing $(0,d)$, $d>0$, which is \emph{invariant under $f$}. Moreover we prove the existence on $U$ of bounds which guarantee normal convergence of \eqref{eq:cpl}. We adapt the construction given in Loray \cite{loray}. 

Moreover, the function $\delta$ can be analytically extended to $U\subset V_P$. Indeed, the function $\delta$ is the sum of finitely many integrals, see \eqref{eq:diff}. 
Each of these integrals is well-defined and analytic on $V_P$. Moreover, by \cite[Corollary 3]{ilyalim}, the same Dulac asymptotic expansion at $0$ of $f$ holds on the whole domain $V_P$. Therefore, the asymptotic behavior at $0$ of $\delta$ on $V_P$ coincides with its behavior  on the real line.

Let us now return to the construction of the $f$-invariant open set $U\subset V_P$ where we show the normal convergence of the series.

Let $\widehat f(z)=z-az^{\alpha_1}\boldsymbol\ell^m+h.o.t.$, $a\in\mathbb R$, be the Dulac expansion of $f$. We make a change of coordinates 
\begin{equation}\label{eq:Psi_0}
w=\Psi_0(z)=\frac{1}{a(\alpha_1-1)}z^{-\alpha_1+1}\boldsymbol\ell^{-m}.
\end{equation}
Note that $\Psi_0$ is the leading term in the expansion of the Fatou coordinate of $f$. The idea is to transform $f$  to \emph{almost} a translation by $1$. It can be shown that this change of variables is a well-defined, univalued analytic homeomorphism $\Psi_0:V_P\to\Psi_0(V_P)$, 
for an appropriate choice of a small sector 
 $V_P$ centered at $(0,d)$. Indeed, up to a scalar multiplication, we can write $\Psi_0$ as the composition:
\begin{equation}\label{eq:decomp}
\Psi_0=f_3\circ f_2\circ f_1,\ f_1(z)=z^{-\frac{\alpha_1-1}{m}},\ f_2(z)=z\cdot (-\log z),\ f_3(z)=z^{m}.
\end{equation}
All three functions are analytic homeomorphisms on corresponding domains.  To see that $f_2$ is an analytic homeomorphism, we can consider the function $h(z)=-\log f_2(e^{-z})=z-\log z$, which is a small perturbation of identity  at infinity.

We construct now an $f$-invariant domain inside $V_P$. We work in the $w$-plane at $\infty=\Psi_0(0)$ (see \eqref{eq:Psi_0}). We show that there exists an open subset of the image $\Psi_0(V_P)$ which is $\tilde f$-invariant, where $\tilde f$ is the analogue of $f$ in the $w$-plane, defined in \eqref{eq:eff} below. Its pre-image by $\Psi_0$ is then an open subset $U$ of $V_P$ in the $z$-plane, which contains $(0,d)$ and which is $f$-invariant. 

Let us analyze the \emph{shape} of $\Psi_0(V_P)$. 
Since $\Psi_0$ is a homeomorphism from $V_P$ to $\Psi_0(V_P)$, $\Psi_0(V_P)$ is open and connected, since $V_P$ is. In fact, by decomposition \eqref{eq:decomp} of $\Psi_0$, we see that $\Psi_0$ maps sector $V_P$ to \emph{almost} a sector, that is, the image $\Psi_0(V_P)$ contains a sector at $\infty$ (of sufficiently big radius and small opening). Denote this sector by $W_{R_1,\theta_1}\subset \Psi_0(V_P)$.

Let $f_0$ be the germ from $\mathcal G_{AN}$ with the Fatou coordinate $\Psi_0$, that is, defined by
$$
f_0(z):=\Psi_0^{-1}\big(\Psi_0(z)+1\big),
\ z\in V_P.
$$
The map $\Psi_0$ is analytic on $V_P$ and $\Psi_0(z)+1$ remains in the image $\Psi_0(V_P)$ for every $z\in V_P$, as we show below when analyzing the image $\Psi_0(V_P)$, after possibly reducing the radius and the opening of $V_P$. Hence, $f_0$ is well-defined and analytic on $V_P$. By the Taylor expansion and explicit formula for $\Psi_0$, we get that $f_0(z)=z-az^{\alpha_1}\boldsymbol\ell^m+o(z^{\alpha_1}\boldsymbol\ell^{m})$. Note that the leading term of  the expansion of $f_0-\mathrm{id}$ is the same as the leading term of $f-\mathrm{id}$.

We define $\tilde f(w):=\Psi_0\circ f\circ \Psi_0^{-1}(w)$ (an analogue of $f$ in the $w$-plane, defined on the image by $\Psi_0$ of an $f$-invariant subset of $V_P$). In this new coordinate $w=\Psi_{0}\left(z\right)$, it can be checked that there exists a germ $h(w)$ such that:
\begin{align}\label{eq:eff}
\tilde f(w)&=\Psi_0(f(z))=\Psi_0(f_0(z)+R(z))\nonumber\\
&=\Psi_0(f_0(z))+\Psi_0'(f_0(z))\cdot R(z)+h.o.t.\nonumber\\
&=\Psi_0(z)+1+g(z)=w+1+h(w),
\end{align}
and $h(w)=o(1)$ as $w\to+\infty$. The function $R$ above is defined by $f(z)=f_0(z)+R(z)$, so $R$ is analytic on $V_P$ with $R(z)=cz^{\alpha_2}\boldsymbol\ell^{k}+o(z^{\alpha_2}\boldsymbol\ell^{k}),\ c\in\mathbb R,\ (\alpha_2,k)\succ (\alpha_1,m),\ z\to 0,\ z\in V_P$. Thus,
\begin{align}\label{eq:eh}h(w)=\begin{cases}C(\log w)^{-p}+o((\log w)^{-p}),\ p\in\mathbb N,&\ 
\text{if $\alpha_2=\alpha_1$},\\
Cw^{-\beta}\log^r  w+o(w^{-\beta}\log^r w),\ \beta>0,\ r\in\mathbb R,&\ \text{if $\alpha_2>\alpha_1,\ \ C\in\mathbb R$}.
\end{cases}\end{align}
It is easier to get estimates  and $\tilde f$-invariance in the $w$-plane. Let $R>0$. If $|w|>R$ then from \eqref{eq:eh} we get:
\begin{equation}\label{eq:ocj}
|h(w)|\leq \begin{cases}c (\log R)^{-p},&\ \text{if }\alpha_2=\alpha_1,\\
cR^{-(\beta-\varepsilon)},\ \varepsilon>0,\ \beta-\varepsilon>0, &\ \text{if }\alpha_2>\alpha_1,\ \ c>0.
\end{cases}
\end{equation}

We now construct an $\tilde{f}$-invariant sector $W_{R,\alpha_R}\subset\Psi_0(V_P)$. To this end we define $\alpha_R>0$ such that 
\begin{equation}\label{eq:alphaR}
\sin\alpha_R:=
\begin{cases} 
c (\log R)^{-p},&\ \text{if }\alpha_2=\alpha_1,\\
cR^{-(\beta-\varepsilon)},\ \varepsilon>0,&\ \text{if }\alpha_2>\alpha_1.
\end{cases}
\end{equation}
Take $R$ big enough so that the open sector $W_{R,\alpha_R}$ in the $w$-plane of radius $|w|>R$ and of opening $2\alpha_R$ is contained in $W_{R_1,\theta_1}$ ($R>R_1$ and $\alpha_R<\theta_1$). This is possible, since $\alpha_R\to 0$, as $R\to\infty$. 

By construction, on $W_{R,\alpha_R}$ it holds that, for every $w\in W_{R,\alpha_R}$, the sector centered at $w$ with opening $2\alpha_R$ is completely contained in $W_{R,\alpha_R}$. Using \eqref{eq:eff}, \eqref{eq:ocj} and \eqref{eq:alphaR}, we obtain, for every $w$ such that $|w|\geq R$, that the whole orbit $\{\tilde f^{\circ n}(w):\ n\in\mathbb N_0\}$ remains in the open sector centered at $w$ with opening $2\alpha_R$. Therefore, the open sector $W_{R,\alpha_R}$ is invariant for $\widetilde f(w)$. 

Moreover, for $w\in W_{R,\alpha_R}$ we get, by \eqref{eq:eff}, \eqref{eq:ocj}, the following estimates:
$$
|\tilde f^{\circ n}(w)-w-n|\leq \begin{cases}c n R^{-(\beta-\varepsilon)},\\
c n (\log R)^{-p}.\end{cases}
$$
It follows that:
\begin{equation}\label{eq:jj}
|\tilde f^{\circ n}(w)|\geq C_R \cdot n,\ C_R>0,\ w\in W_{R,\alpha_R}.
\end{equation}
The sector $W_{R,\alpha_R}$ is open and contains $(R,+\infty)\subset\mathbb R_+$. Denote by $U\subset\mathbb C$ its pre-image in the $z$-plane by the homeomorphism $\Psi_0^{-1}$:
$$
U:=\Psi_0^{-1}(W_{R,\alpha_R}).
$$
Obviously, $U$ is an open set, containing $(0,d)$, and invariant under $f$. From \eqref{eq:jj}, returning to the variable $z$ we get:
$$
|f^{\circ n}(z)|\leq D_R\cdot n^{-(\alpha_1-1-\varepsilon)},\ n\in\mathbb N, \ D_R>0,\ z\in U,
$$
for any small $\varepsilon>0$ such that $\alpha_1-1-\varepsilon>0$. 
Using this estimate, $f$-invariance of $U$ and the fact that $\delta(z)=O(z^\gamma),\ \gamma>\alpha_1-1$, $z\in U$, it holds that the series \eqref{eq:wei} is well-defined and converges normally on $U$. By the Weierstrass theorem, the function $h$ defined by the series \eqref{eq:wei} is analytic on $U$. In particular, $h\big|_{(0,d)}\in\mathcal G_{AN}$. 
\medskip

Having shown the convergence of the series \eqref{eq:wei} towards an analytic germ $h\in\mathcal G_{AN}$, the rest of the proof can be carried through on $(0,d)$. Let $\varepsilon>0$ such that $\gamma>\alpha_{1}-1+\varepsilon$.
By Proposition~\ref{prop:estimate}, we get that there exists $d>0$
such that 
\[
f^{\circ k}(x)\leq\big(\frac{k}{2}\big)^{-\frac{1}{\alpha_{1}-1+\varepsilon}},\ x\in(0,d).
\]
The last point of the proposition
follows from the following inequalities: 
\begin{align*}
|h(x)|=\Big|-\sum_{k=0}^{\infty} & \delta(f^{\circ k}(x))\Big|\leq C\sum_{k\geq0}x^{\gamma}(1+\frac{k}{2}x^{\alpha_{1}-1+\varepsilon})^{-\frac{\gamma}{\alpha_{1}-1+\varepsilon}}\\
 & \leq C_{1}x^{\gamma}\int_{0}^{\infty}(1+\frac{t}{2}x^{\alpha_{1}+\varepsilon-1})^{-\frac{\gamma}{\alpha_{1}-1+\varepsilon}}\, dt\leq O(x^{\gamma-(\alpha_{1}-1+\varepsilon)}),\ x\to0+,
\end{align*}
where $C>0$ and $C_{1}>0$ are constants. \end{proof}

\begin{prop}\label{prop:estimate} Consider an analytic function $f$ on
$(0,d)$ satisfying $0<f(x)<x$, $x\in(0,d)$. Let $\alpha>0$ such that, for every
$\varepsilon>0$:
$$\lim_{x\to0}\frac{x^{\alpha+\varepsilon}}{x-f(x)}=0.$$
Then for every
$\varepsilon>0$ there exist $n_{0}\in\mathbb{N}$ and $d_{1}>0$,
such that 
\begin{equation}
0<f^{\circ n}(x)<x\Big(1+\frac{n}{2}x^{\alpha+\varepsilon-1}\Big)^{-\frac{1}{\alpha+\varepsilon-1}},\ x\in(0,d_{1}),\ n\geq n_{0}.\label{eq:estima}
\end{equation}
\end{prop}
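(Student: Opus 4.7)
The plan is to convert the discrete iteration estimate into a telescoping bound via a change of variable. Set $\beta:=\alpha+\varepsilon$ (implicitly $\beta>1$, so that the exponent $-\frac{1}{\beta-1}$ in the statement is well defined), and let $u_n:=f^{\circ n}(x)$, $v_n:=u_n^{\,1-\beta}$. The desired inequality $u_n<x\bigl(1+\tfrac{n}{2}x^{\beta-1}\bigr)^{-1/(\beta-1)}$ is equivalent to $v_n\geq x^{1-\beta}+\tfrac{n}{2}$, so it suffices to establish the uniform one-step gain $v_{n+1}-v_n\geq\tfrac{1}{2}$ along the orbit.

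First I would calibrate the constants using the hypothesis. Since $x^{\beta}/(x-f(x))\to 0$ as $x\to 0^+$, one can choose $d_1\in(0,d)$ such that $x-f(x)\geq\frac{1}{2(\beta-1)}x^{\beta}$ for every $x\in(0,d_1)$. Because $0<f(x)<x$ on $(0,d)$, the orbit $(u_n)_{n\geq 0}$ is strictly decreasing, so $u_n\in(0,d_1)$ for all $n\geq 0$ whenever $x\in(0,d_1)$; in particular the lower bound on $x-f(x)$ may be applied at every point of the orbit.

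Next, applying the mean value theorem to the decreasing function $t\mapsto t^{\,1-\beta}$ on $[u_{n+1},u_n]$ produces $\xi_n\in(u_{n+1},u_n)$ with $v_{n+1}-v_n=(1-\beta)\xi_n^{-\beta}(u_{n+1}-u_n)=(\beta-1)\xi_n^{-\beta}(u_n-f(u_n))$. Since $\xi_n\leq u_n$ gives $\xi_n^{-\beta}\geq u_n^{-\beta}$, plugging in the chosen lower bound on $u_n-f(u_n)$ yields $v_{n+1}-v_n\geq(\beta-1)u_n^{-\beta}\cdot\frac{1}{2(\beta-1)}u_n^{\beta}=\tfrac{1}{2}$. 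Iterating from $n=0$ gives $v_n\geq x^{1-\beta}+\tfrac{n}{2}$, which inverts to the stated upper bound for $f^{\circ n}(x)$. To obtain strict inequality as written, it suffices to replace the constant $\frac{1}{2(\beta-1)}$ above by a slightly larger one, say $\frac{1}{(\beta-1)}$ (still permitted by the hypothesis up to shrinking $d_1$), producing the stronger gain $v_{n+1}-v_n\geq 1$ and hence a strict upper bound; any $n_0\geq 1$ then works.

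The only subtle point is the calibration of the constant in the hypothesis so that the telescoping produces exactly the factor $1/2$ prescribed by the statement; once this is done, monotonicity of the orbit and a single application of the mean value theorem take care of everything, and I do not anticipate a genuine obstacle beyond this bookkeeping.
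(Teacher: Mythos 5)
Your proof is correct. The change of variable $v_n=u_n^{1-\beta}$ with $\beta=\alpha+\varepsilon$, the calibration $x-f(x)\geq\tfrac{1}{2(\beta-1)}x^{\beta}$ on $(0,d_1)$ drawn from the hypothesis, the observation that the decreasing orbit stays in $(0,d_1)$, and the mean value theorem giving the one-step gain $v_{n+1}-v_n\geq\tfrac{1}{2}$ all check out; the inversion (raising to the negative power $\tfrac{1}{1-\beta}$ reverses the inequality) yields exactly the claimed bound, and your device of choosing a slightly larger constant to force strict inequality works (in fact strictness already falls out of the MVT step, since $\xi_n<u_n$ is strict and $t\mapsto t^{-\beta}$ is strictly decreasing, so $n_0=1$ suffices either way).

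The route is genuinely different from, and tighter than, the paper's sketch. The paper proposes to compare $f$ with the model map $f_1(x)=x-x^{\alpha+\varepsilon}$ and to change to a Fatou-type coordinate $w\sim x^{-(\alpha+\varepsilon-1)}$ in which $f_1$ becomes $w\mapsto w+1+O(w^{-1})$, then declares the rest easy. But in the variable $v=u^{1-\beta}$ the model map $f_1$ itself only produces a one-step gain of roughly $\beta-1$, which is \emph{smaller} than $\tfrac12$ whenever $1<\beta<\tfrac32$; in that range the bound with the specific constant $\tfrac{n}{2}$ is not inherited from $f_1$ alone, so the reduction as written has a gap. Your argument closes it precisely because you never pass through $f_1$: you extract from the hypothesis $x^{\beta}/(x-f(x))\to 0$ an arbitrarily large coefficient in the one-step drop of $f$, and thus obtain the required uniform gain in $v$ directly. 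The only thing worth making explicit (you note it parenthetically) is that the exponent $\beta=\alpha+\varepsilon$ must exceed $1$ for the statement to be meaningful; this holds in the paper's application, where $\alpha=\alpha_1>1$.
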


\begin{proof} Take $\varepsilon>0$ and $f_{1}(x)=x-x^{\alpha+\varepsilon}$.
We prove the proposition for $f_{1}$, and the statement for $f$
follows. To prove the estimate, we use the change of variables $w=\frac{1}{(\alpha+\varepsilon-1)x^{\alpha+\varepsilon}}$,
$w\in(M,\infty)$, by which $f_{1}$ becomes: 
\[
F_{1}(w)=w+1+O(w^{-1}).
\]
It is now easy to end the proof working with $F_{1}$. \end{proof}

\subsubsection{\bf{The sectional asymptotic expansions of the Fatou coordinate with respect to integral sections}}

\label{sec:subfour}

We prove here that $R\in \mathcal G_{AN}$ defined by \eqref{eq:sumsol} admits the formal infinitesimal part  of the Fatou coordinate $\widehat R\in\widehat{\mathcal L}$ constructed in \eqref{eq:er} as its sectional asymptotic expansion with respect to \emph{any} integral section $\mathbf s$. We have already proven in Subsection~\ref{sec:subone} that $\Psi^\infty\in\mathcal G_{AN}$ admits $\widehat \Psi^{\infty}\in\widehat{\mathcal L}_2^\infty$ as its sectional asymptotic expansion with respect to any integral section for appropriate choices of constant terms in $\Psi^\infty,\ \widehat\Psi^\infty$.  Consequently, the Fatou coordinate $\Psi=\Psi^\infty+R\in\mathcal G_{AN}$ will admit the formal Fatou coordinate $\widehat\Psi=\widehat\Psi^\infty+\widehat R\in\widehat{\mathcal L}_2^\infty$ as its sectional asymptotic expansion with respect to any integral section, with appropriate choice of constant terms in  $\Psi^\infty,\ \widehat\Psi^\infty$. Finally, different choices of integral sections lead to different choices of the constants in $\Psi$ or in $\widehat \Psi$, if $\widehat\Psi$ is to be the sectional asymptotic expansion of $\Psi$ with respect to the new integral section.
\smallskip

Put $h_{n}:=R-\sum_{i=1}^{n}R_{r_{0}+i}$, $n\in\mathbb{N}$. Obviously,
$h_{n}\in\mathcal G_{AN}$ and $h_n=o(1)$ (since $R=o(1)$ and $R_{r_{0}+i}=o(1)$, $i\in\mathbb N$). It can easily be checked that $h_n$
satisfies the difference equation with the 
right-hand side $\delta_{n}(x)=O(x^{\gamma_{n}})$, where $\gamma_{n}\to\infty$ and $\delta_n\in\mathcal G_{AN}$.
Iterating the equation for $h_{n}$ and passing to limit as for $R(x)$ before, we get that
$h_{n}$ is necessarily given by the formula: 
\[
h_{n}(x)=-\sum_{k=0}^{\infty}\delta_{n}(f^{\circ k}(x)).
\]
By Proposition~\ref{prop:asyproof}, we get that $h_{n}=O(x^{\beta_{n}})$,
where $\beta_{n}\to\infty$. Together with the fact that $\widehat{R}_{r_{0}+i}\in\widehat{\mathfrak{L}}$
is the sectional asymptotic expansion of $R_{r_{0}+i}\in\mathcal G_{AN}$, with respect to any integral section (see Proposition~\ref{prop:rational_fct}), $i\in\mathbb{N}$, 
this proves that $R\in \mathcal G_{AN}$, $R=o(1)$, admits $\widehat{R}\in\widehat{\mathfrak{L}}$ as the sectional asymptotic expansion
with respect to any integral section.

\subsubsection{\bf{Uniqueness of the Fatou coordinate}}

\label{sec:subthree}\

By Proposition~\ref{def:formfatou}, the formal Fatou coordinate $\widehat{\Psi}\in\widehat{\mathcal{L}}_{2}^{\infty}$
is unique in $\widehat{\mathfrak L}$, up to an additive constant.

\begin{example}[Non-uniqueness of a Fatou coordinate with only sectional asymptotic expansion in $\mathfrak L$]\label{ex:add}
Let $\Psi\in\mathcal G_{AN}$ be the Fatou coordinate for $f$ Dulac constructed in the algorithm. It admits a sectional asymptotic expansion $\widehat\Psi\in\widehat{\mathcal L}_2^\infty$ with respect to an integral section:
$$
\widehat \Psi=\sum_{i=1}^{\infty} x^{\alpha_i}\widehat f_i(\boldsymbol\ell),
$$
where $\alpha_1<0$. The integral section attributes to partial expansion 
 $\sum_{i=1}^{n}x^{\alpha_i}\widehat f_i(\boldsymbol\ell)$ at each limit ordinal stage $n\in\mathbb N$ the \emph{sum} $\sum_{i=1}^{n}x^{\alpha_i} f_i(\boldsymbol\ell)\in\mathcal G_{AN}$, where $f_i\in\mathcal G_{AN}$ are integral sums of $\widehat f_i\in\widehat{\mathcal L}_0^I$.

Take now another Fatou coordinate for $f$:
$$
\Psi_1=\Psi+\sin(2\pi\Psi).
$$
Obviously, $\Psi_1\in\mathcal G_{AN}$ and satisfies the Abel equation. We prove that $\Psi_1$ admits the same $\widehat\Psi$ as its sectional asymptotic expansion if we take e.g. the section $\mathbf s$ that maps:
\begin{align*}
& x^{\alpha_1}\widehat f_1\mapsto x^{\alpha_1}\big(f_1(\boldsymbol\ell)+e^{\frac{\alpha_1}{\boldsymbol\ell}}\sin (2\pi\Psi(e^{-1/\boldsymbol\ell}))\big),\\
&x^{\alpha_i}\widehat f_i\mapsto x^{\alpha_i}f_i,\ i\geq 2.
\end{align*}
Note that the real sine function is bounded, so $e^{\frac{\alpha_1}{\boldsymbol\ell}}\sin (2\pi\Psi(e^{-1/\boldsymbol\ell}))$, $\alpha_1<0$, is exponentially small with respect to $\boldsymbol\ell$. Therefore $f_1(\boldsymbol\ell)+e^{\frac{\alpha_1}{\boldsymbol\ell}}\sin (2\pi\Psi(e^{-1/\boldsymbol\ell}))$ admits Poincar\' e asymptotic expansion $\widehat f_1(\boldsymbol\ell)$. To conclude, $\Psi_1(x)$ admits the sectional asymptotic expansion $\widehat\Psi$ with respect to section $\mathbf s$ (which is not integral sectional).
\end{example}

We prove now that the Fatou coordinate for a Dulac germ $f$ admitting a sectional asymptotic expansion in $\widehat{\mathfrak L}$ with respect to an integral section \emph{is} unique (up to an additive constant). Suppose the contrary, that is, that $\Psi_1\in\mathcal G_{AN}$ and $\Psi_2\in\mathcal G_{AN}$ are two Fatou coordinates for $f$ that admit integral sectional asymptotic expansions in $\widehat{\mathfrak L}$. Note that if $\Psi_1$ and $\Psi_2$ admit integral sectional asymptotic expansions in $\widehat{\mathfrak L}$, then $\Psi_1-\Psi_2$ admits an integral sectional asymptotic expansion in $\widehat{\mathfrak L}$ (this can be easily seen by definition of integral sectional asymptotic expansions as sum of blocks $\sum_{i\in\mathbb N}\int_*^x x^{\gamma_i}R_i(\boldsymbol\ell) dx$, where $R_i$ is a rational function, and $\gamma_i\in\mathbb R$ strictly increase as $i\to \infty$). This is not the case for only sectional asymptotic expansions in $\widehat{\mathfrak L}$. Take, for example, $\Psi$ and $\Psi_1$ from Example~\ref{ex:add} which both admit sectional asymptotic expansions in $\widehat{\mathfrak L}$, but their difference, $\sin(2\pi\Psi)$, obviously does not admit an asymptotic expansion in $\widehat{\mathfrak L}$.

The difference $\Psi_1-\Psi_2\in\mathcal G_{AN}$ satisfies the equation:
$$
(\Psi_1-\Psi_2)(f(x))=(\Psi_1-\Psi_2)(x).
$$
Since $\Psi_1-\Psi_2$ admits an integral sectional asymptotic expansion, we have that (by blocks) there exists $\beta\in\mathbb R$, a rational function $R$ not identically equal to $0$ and $\delta>0$ such that
$$
\Psi_1(x)-\Psi_2(x)=\int_*^x R(\boldsymbol\ell){x^{\beta}}+o(x^{\beta+1+\delta}),\ x\to 0.
$$
Since $f(x)=x+x^{\alpha_1}P_1(\boldsymbol\ell)+o(x^{\alpha_1+\delta})$, $x\to 0$ , by Taylor formula we get:
$$
(\Psi_1-\Psi_2)'(\xi_x)\cdot \big(x^{\alpha_1}P_1(\boldsymbol\ell)+o(x^{\alpha_1+\delta})\big)=0,
$$
where $\xi_x=x+o(x)$, $x\to 0$. We conclude $(\Psi_1-\Psi_2)'(\xi_x)=0$ for small $x$, but this is not possible since $(\Psi_1-\Psi_2)'(\xi_x)\sim x^{\beta} R(\boldsymbol\ell)\sim x^{\beta}\boldsymbol\ell^M,$ $M\in\mathbb Z$, as $x\to 0$. The last approximation follows since $R(\boldsymbol\ell)$ is a rational function not identically equal to $0$.

\subsection{\bf{A precise form of the formal Fatou coordinate}}

In the course of the proof of the Theorem (in Subsection~\ref{sec:subzero}), we have also proved a more  precise form of the formal Fatou coordinate $\widehat\Psi$ as stated below in Proposition~\ref{prop:exformfatou}. In particular, we have proved that there is only one monomial in $\widehat\Psi$ which contains the double logarithm.
\begin{prop}[The formal Fatou coordinate of a parabolic Dulac germ]\label{prop:exformfatou} Let
$\widehat{\Psi}\in\widehat{\mathcal{L}}_{2}^{\infty}$ be the $($unique in $\widehat{\mathfrak L}$ up to an additive constant$)$ formal Fatou coordinate for a parabolic Dulac
germ $f\in\mathcal G_{AN}$. Then there exists $\rho\in\mathbb R$ such that $\widehat{\Psi}-\rho\boldsymbol{\ell}_{2}^{-1}\in\widehat{\mathfrak{L}}^{\infty}$,
and 
\begin{equation}\label{eq:ffd}
\widehat{\Psi}-\rho\boldsymbol{\ell}_{2}^{-1}=\sum_{i=1}^{\infty}x^{\alpha_{i}}\widehat{f}_{i}(\boldsymbol{\ell}).
\end{equation}
Here,

1. $\alpha_{1}<0$,

2. $\alpha_{i}$ is a strictly increasing real sequence tending to $+\infty$ (finitely generated),

3. $\widehat{f}_{i}\in\widehat{\mathcal L}_0^\infty$ is a formal Laurent series given by:
\begin{equation}\label{eq:conve}
\widehat{f}_{i}(\boldsymbol{\ell})=\frac{\int x^{\alpha_{i}-1}\widehat R_{i}(\boldsymbol{\ell})\,dx}{x^{\alpha_{i}}},
\end{equation}
where $\widehat R_{i}$ is a \emph{convergent series} $($more precisely, the power asymptotic
expansion of a rational function$)$.
\end{prop}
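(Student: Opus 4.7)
The plan is to build directly on the block-by-block construction of $\widehat{\Psi}$ carried out in Subsection~\ref{sec:subzero}. Denote by $\mu > 1$ the leading $x$-exponent of $\mathrm{id} - \widehat{f}$ (this is the ``$\alpha_1$'' of that subsection, not to be confused with the $\alpha_1$ of the present proposition). Recall that $\widehat{\Psi} = \sum_{k \geq 1} \widehat{\Psi}_k$, where $\widehat{\Psi}_k$ is the formal antiderivative, without constant term, of $x^{\beta_k - \mu} Q_k(\boldsymbol{\ell})$; here $Q_k$ is a rational function of $\boldsymbol{\ell}$ whose denominator is a positive integer power of $P_1(\boldsymbol{\ell}^{-1})$, and $\beta_k$ is the order in $x$ of the leading block of the current right-hand side of the Abel equation. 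The orders $\beta_k$ form a strictly increasing sequence in the well-ordered, finitely generated set $\{0\} \cup \mathcal{R}$ and either tend to $+\infty$ or else the algorithm terminates after finitely many steps.

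The key observation is that monomials in $\boldsymbol{\ell}_2$ can enter $\widehat{\Psi}$ only through one very specific mechanism. Make the substitution $u = \boldsymbol{\ell} = -1/\log x$, which yields $dx/x = du/u^2$. If $\beta_k \neq \mu - 1$, then repeated integration by parts expands each monomial $c_{k,j} \boldsymbol{\ell}^j$ of $Q_k$ into a formal series of the form $x^{\beta_k - \mu + 1}$ times a formal power series in $\boldsymbol{\ell}$ with factorial-type coefficients; in particular, no logarithm arises. If, on the contrary, $\beta_k = \mu - 1$, then the integral reduces to $\int Q_k(u) u^{-2}\, du$; writing $Q_k(u) = \sum_{j} c_{k,j} u^j$ as a Laurent series at $0$, term-by-term integration of $u^{j-2}$ produces pure powers of $\boldsymbol{\ell}$ for every $j \neq 1$, while the single index $j = 1$ contributes $c_{k,1} \log \boldsymbol{\ell} = -c_{k,1} \boldsymbol{\ell}_2^{-1}$. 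This is the only mechanism by which a $\boldsymbol{\ell}_2$-monomial can enter $\widehat{\Psi}$.

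Since the $\beta_k$ are strictly increasing, the critical value $\mu - 1$ is attained for at most one index $k^\ast$; set $\rho := -c_{k^\ast, 1}$, or $\rho := 0$ if no such index exists. Then $\widehat{\Psi} - \rho \boldsymbol{\ell}_2^{-1}$ contains no monomial in $\boldsymbol{\ell}_2$. Regrouping its remaining monomials by common power of $x$ yields the representation \eqref{eq:ffd}, with the $x$-exponents running through $\{\beta_k - (\mu - 1) : k \geq 1\}$ in strictly increasing order. Property~1 follows from $\beta_1 - (\mu - 1) = 1 - \mu < 0$; property~2 follows because $\mathcal{R}$ is contained in a finitely generated sub-semigroup of $\mathbb{R}_{\geq 0}$; and property~3 --- formula \eqref{eq:conve} --- is a restatement of the integral expression for $\widehat{\Psi}_k$ divided by the appropriate power of $x$, with $\widehat{R}_i$ identified with the Laurent expansion of $Q_{k}$ at $\boldsymbol{\ell} = 0$, which is convergent since $Q_k$ is rational.

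I expect the main obstacle to be the middle paragraph: one must verify rigorously, via the substitution $u = \boldsymbol{\ell}$, that no step of the block-by-block algorithm other than the one at $\beta_k = \mu - 1$ can produce a logarithm of $\boldsymbol{\ell}$. This amounts to a careful term-by-term tracking of Laurent exponents through integration by parts; it is essentially elementary, but the bookkeeping must be done with care. All other properties are direct consequences of the support analysis already completed in Subsection~\ref{sec:subzero}.
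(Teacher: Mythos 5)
Your argument is correct and follows the same route as the paper: reading off the structure of $\widehat{\Psi}$ from the block-by-block algorithm of Subsection~\ref{sec:subzero}, combined with the observation (essentially Proposition~\ref{prop:rational_fct}, or equivalently Proposition~\ref{pomoc} and formula~\eqref{intparts}) that the formal antiderivative of $x^{\gamma}Q(\boldsymbol{\ell})\,dx$ produces a $\boldsymbol{\ell}_2^{-1}$ term precisely when $\gamma=-1$ and the residue of $Q$ at $\boldsymbol{\ell}=0$ is nonzero. Your point that the strictly increasing orders $\beta_k$ force this to happen at most once is exactly the argument needed; the paper also offers an alternative derivation of the same uniqueness via the formal normal form in Remark~\ref{rem:class}, but that is not required.
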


Note that, in general, $\widehat{f}_{i}(y)$ is a \emph{divergent
power series} for every positive value $y>0$. Nevertheless, it is what we call
\emph{integrally summable} in Definition~\ref{defi}. 

\medskip     

\section{Appendix}

\label{sec:appendix}

The following Remark is used in Section~\ref{sec:embedding}.

\begin{obs}[Description of the formal Fatou coordinate for parabolic transseries]\label{rem:class}
We explain here another way of deducing that the formal Fatou coordinate $\widehat\Psi\in\widehat{\mathfrak L}$ of a parabolic $\widehat{f}\in\widehat{\mathcal{L}}$
belongs to the class $\widehat{\mathcal{L}}_{2}^{\infty}$ and its precise form.

Recall the formal normal form $\widehat{f}_{0}$ of $\widehat{f}$
in $\widehat{\mathcal{L}}$ deduced in \cite{mrrz2}, given as the
formal time-one map of a simple vector field: 
\begin{align*}
 & \widehat{f}=x+ax^{\alpha}\boldsymbol{\ell}^{m}+\mathrm{h.o.t.},\text{ with }\alpha>1,\text{ or }\alpha=1\text{ and }m\in\mathbb{N},\\
 & \widehat{f}_{0}(x)=\mathrm{Exp}\big(\widehat \xi_{0}(x)\frac{\mathrm{d}}{\mathrm{d}x}\big)\cdot\mathrm{id},\\
&\qquad\qquad\quad \widehat \xi_0(x)=\frac{ax^{\alpha}\boldsymbol{\ell}^{m}}{1+\frac{a\alpha}{2}x^{\alpha-1}\boldsymbol{\ell}^{m}-\big(\frac{am}{2}+\frac{b}{a}\big)x^{\alpha-1}\boldsymbol{\ell}^{m+1}}\frac{\mathrm{d}}{\mathrm{d}x},\ b\in\mathbb{R}.
\end{align*}
By the existence part of Proposition~\ref{def:formfatou}, we look
for a formal Fatou coordinate of $\widehat{f}_{0}$ as the formal antiderivative
of $\frac{1}{\widehat \xi_{0}}$: 
\begin{align*}
\widehat{\Psi}_{0}= & \frac{1}{a}\int x^{-\alpha}\boldsymbol{\ell}^{-m}\, dt+\frac{\alpha}{2}\log x+(\frac{m}{2}+\frac{b}{a^{2}})\int\frac{dx}{x\log x}\\
= &\frac{1}{a} \widehat{h}(x)-\frac{\alpha}{2}\boldsymbol{\ell}^{-1}+(\frac{m}{2}+\frac{b}{a^{2}})\boldsymbol{\ell}_{2}^{-1}+C,\ C\in\mathbb{R}.
\end{align*}
Here, $\widehat{h}\in\widehat{\mathcal{L}}^{\infty}$ is obtained by repeated formal \emph{integration
by parts}: 
\begin{equation}
\int x^{-\alpha}\boldsymbol{\ell}^{-m}\, dx=\begin{cases}
\frac{1}{1-\alpha}x^{-\alpha+1}\boldsymbol{\ell}^{-m}+\frac{m}{1-\alpha}\int x^{-\alpha}\boldsymbol{\ell}^{-m+1}\, dx, & \alpha\neq1,\\
-\frac{1}{m+1}\boldsymbol{\ell}^{-m-1}+C,\ C\in\mathbb{R}, & \alpha=1,\ m\in\mathbb{N}.
\end{cases}\label{intparts}
\end{equation}
In particular, in the case when $m\in\mathbb N$, $\widehat h$ contains only finitely many monomials. 
\medskip

Now put $\widehat{\Psi}:=\widehat{\Psi}_{0}\circ\widehat{\varphi}$,
where $\widehat{\varphi}\in\widehat{\mathcal{L}}$ parabolic is the
formal change of variables reducing $\widehat{f}$ to $\widehat{f}_{0}$. It is easy to check that the composition $\widehat\Psi$ belongs to $\widehat{\mathcal{L}}_{2}^{\infty}$. It is obviously a Fatou coordinate for $\widehat f$, since $\widehat\Psi_0$ satisfies the Abel equation for $\widehat f_0$. Moreover, by Proposition~\ref{def:formfatou}, the formal Fatou coordinate of $\widehat f$ is unique in $\widehat{\mathfrak{L}}$ (up to a constant term).

Therefore, at most one term with double logarithm $\boldsymbol{\ell}_{2}^{-1}$
appears in the formal Fatou coordinate $\widehat{\Psi}$ of $\widehat f$. It corresponds to the residual term $bx^{2\alpha-1}\boldsymbol\ell^{2m+1}$
in the normal form $\widehat{f}_{0}(x)=x+ax^\alpha\boldsymbol\ell^m+bx^{2\alpha-1}\boldsymbol\ell^{2m+1}$. 
\end{obs}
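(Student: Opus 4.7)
The plan is to bypass the block-by-block algorithm altogether by reducing to a tractable formal normal form and then pulling back. By \cite{mrrz2}, any parabolic $\widehat f \in \widehat{\mathcal L}$ is formally conjugate, via a parabolic change of variable $\widehat\varphi \in \widehat{\mathcal L}$, to the time-one map $\widehat f_0 = \mathrm{Exp}(\widehat\xi_0\,d/dx)\cdot\mathrm{id}$ of the explicit vector field displayed in the statement. Since conjugacies carry Fatou coordinates to Fatou coordinates, I would first construct a formal Fatou coordinate $\widehat\Psi_0$ for $\widehat f_0$, analyse its support precisely, then set $\widehat\Psi := \widehat\Psi_0 \circ \widehat\varphi$. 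By the uniqueness statement in Proposition~\ref{def:formfatou}, this $\widehat\Psi$ coincides with the formal Fatou coordinate of $\widehat f$ up to an additive constant, so any structural information read off from $\widehat\Psi_0 \circ \widehat\varphi$ applies to it.

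To build $\widehat\Psi_0$, I would invoke the fact, recalled in the proof of Proposition~\ref{def:formfatou}, that the time-one map of $\widehat\xi_0\,d/dx$ admits as a formal Fatou coordinate any formal antiderivative of $1/\widehat\xi_0$. Expanding $1/\widehat\xi_0$ in $\widehat{\mathcal L}$ by the formal geometric series yields
\[
\frac{1}{\widehat\xi_0(x)} \;=\; \frac{1}{a}x^{-\alpha}\boldsymbol\ell^{-m} \;+\; \frac{\alpha}{2}x^{-1} \;+\; \Bigl(\frac{m}{2} + \frac{b}{a^2}\Bigr)x^{-1}\boldsymbol\ell^{-1} \;+\; \widehat r(x),
\]
where $\widehat r \in \widehat{\mathcal L}$ collects only monomials $x^\gamma\boldsymbol\ell^k$ with $\gamma > -1$. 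The first summand integrates, via the integration-by-parts recursion \eqref{intparts}, into an element of $\widehat{\mathcal L}^\infty$: each application lowers the exponent of $\boldsymbol\ell$ by one while keeping the base power $x^{-\alpha+1}$ fixed (or terminates with a $\boldsymbol\ell^{-m-1}$ term in the borderline case $\alpha=1$). The term $\frac{\alpha}{2}x^{-1}$ integrates to $-\frac{\alpha}{2}\boldsymbol\ell^{-1} \in \widehat{\mathcal L}^\infty$, and the crucial term $(\frac{m}{2}+\frac{b}{a^2})x^{-1}\boldsymbol\ell^{-1}$ integrates precisely to $(\frac{m}{2}+\frac{b}{a^2})\boldsymbol\ell_2^{-1}$, the unique contribution involving the double logarithm. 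Finally $\widehat r$ integrates monomial by monomial inside $\widehat{\mathcal L}^\infty$ because all its exponents are $>-1$. Hence $\widehat\Psi_0 \in \widehat{\mathcal L}_2^\infty$ and contains exactly one $\boldsymbol\ell_2$-monomial, of coefficient $\rho = \frac{m}{2} + \frac{b}{a^2}$.

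To conclude, I would verify that right-composition with the parabolic $\widehat\varphi \in \widehat{\mathcal L}$ keeps us inside $\widehat{\mathcal L}_2^\infty$ and preserves the double-logarithm coefficient. Writing $\widehat\varphi(x) = x + \widehat\psi(x)$ with $\widehat\psi$ of $x$-order strictly larger than one, the formal Taylor expansion
\[
\widehat\Psi(x) \;=\; \widehat\Psi_0(\widehat\varphi(x)) \;=\; \sum_{k\ge 0} \frac{\widehat\Psi_0^{(k)}(x)}{k!}\,\widehat\psi(x)^k
\]
is summable in $\widehat{\mathcal L}_2^\infty$ (each derivative stays in $\widehat{\mathcal L}_2^\infty$, and the $x$-order of successive terms strictly increases). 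The only $\boldsymbol\ell_2$-monomial present in $\widehat\Psi_0$ is $\rho\boldsymbol\ell_2^{-1}$; differentiating it yields monomials in $\widehat{\mathcal L}^\infty$, and multiplying by any positive power of $\widehat\psi$ pushes the $x$-order strictly above zero, so no further $\boldsymbol\ell_2$-monomials are produced and the coefficient $\rho$ is inherited unchanged by $\widehat\Psi$.

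The main obstacles will be (i) showing rigorously that the geometric-series expansion of $1/\widehat\xi_0$, the term-by-term formal integration, and the Taylor composition are well-defined operations inside $\widehat{\mathcal L}_2^\infty$ with well-ordered and finitely-generated supports (routine once one tracks the order types), and (ii) ensuring that the $x^{-1}\boldsymbol\ell^{-1}$-coefficient of $1/\widehat\xi_0$ is really the \emph{only} source of a $\boldsymbol\ell_2$-term. This last point is the whole reason the vector field $\widehat\xi_0$ is engineered as in \cite{mrrz2}: the residual coefficient $b$ is placed exactly so that, after inversion, it contributes a single $x^{-1}\boldsymbol\ell^{-1}$ monomial, and the denominator is otherwise parabolic enough that all other inverse-expansion terms carry either $x^{-\alpha}$ with $\alpha \neq 1$ (integrating to $\widehat{\mathcal L}^\infty$) or $x^\gamma$ with $\gamma > -1$ (integrating to $\widehat{\mathcal L}^\infty$).
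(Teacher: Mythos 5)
Your overall plan — pass to the \cite{mrrz2} formal normal form $\widehat f_0$, take $\widehat\Psi_0$ as a formal antiderivative of $1/\widehat\xi_0$, compose with the parabolic conjugacy $\widehat\varphi$, and invoke the uniqueness of Proposition~\ref{def:formfatou} to identify the result with the formal Fatou coordinate of $\widehat f$ — is exactly the paper's route, and your elaboration on why the $\boldsymbol\ell_2$-coefficient survives the composition with $\widehat\varphi$ is a correct and welcome expansion of the paper's terser ``easy to check''.

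However, the computation of $1/\widehat\xi_0$ contains a slip. No geometric series is involved: $\widehat\xi_0$ is already written as (monomial)/(three-term polynomial), so its reciprocal is an \emph{exact} three-term sum with no remainder $\widehat r$,
\[
\frac{1}{\widehat\xi_0(x)} = \frac{1}{a}x^{-\alpha}\boldsymbol\ell^{-m} + \frac{\alpha}{2}x^{-1} - \Bigl(\frac{m}{2}+\frac{b}{a^2}\Bigr)x^{-1}\boldsymbol\ell .
\]
Your third summand $(\frac{m}{2}+\frac{b}{a^2})x^{-1}\boldsymbol\ell^{-1}$ has the wrong sign and the wrong power of $\boldsymbol\ell$; since $\boldsymbol\ell^{-1}=-\log x$, that monomial equals $-x^{-1}\log x$, whose formal antiderivative is $-\tfrac12(\log x)^2 = -\tfrac12\boldsymbol\ell^{-2}$, \emph{not} $\boldsymbol\ell_2^{-1}$. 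The double logarithm is produced by $x^{-1}\boldsymbol\ell^{+1}=-\tfrac{1}{x\log x}$: indeed $\int\tfrac{dx}{x\log x}=\log(-\log x)=\boldsymbol\ell_2^{-1}$, consistent with Proposition~\ref{pomoc} in the case $\alpha=1$, $m=1$. With this correction (and dropping the spurious tail $\widehat r$), your argument coincides with Remark~\ref{rem:class}.
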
 
\smallskip

\subsection{Propositions for Section~\ref{sec:asymptotic_expansions}}\

\emph{Proof of Proposition~\ref{alph}.}
Suppose that $\widehat f$ is divergent and that there are two exponents of integration $\alpha\neq\beta$ for $\widehat f$. Then 
$$\frac{d}{dx}\big(x^{\alpha}\widehat f(\boldsymbol\ell)\big)=x^{\alpha-1}  R_1(\boldsymbol\ell),\ \frac{d}{dx}\big(x^{\beta}\widehat f(\boldsymbol\ell)\big)=x^{\beta-1}  R_2(\boldsymbol\ell).$$
Therefore,
\begin{align*}
\frac{d}{dx}\big(x^{\alpha}&\widehat f(\boldsymbol\ell)\big)=\frac{d}{dx}\big(x^{\alpha-\beta}\cdot x^{\beta}\widehat f(\boldsymbol\ell)\big)=(\alpha-\beta)x^{\alpha-1} \widehat f(\boldsymbol\ell)+x^{\alpha-\beta}\frac{d}{dx}\big(x^{\beta}\widehat f(\boldsymbol\ell)\big)=\\
&=(\alpha-\beta)x^{\alpha-1} \widehat f(\boldsymbol\ell)+x^{\alpha-1} R_2(\boldsymbol\ell)\\
&\qquad \Rightarrow x^{\alpha-1} R_1(\boldsymbol\ell)=(\alpha-\beta)x^{\alpha-1} \widehat f(\boldsymbol\ell)+x^{\alpha -1} R_2(\boldsymbol\ell).
\end{align*}
Since $R_1$ and $R_2$ are both convergent Laurent series and $\alpha-\beta\neq 0$, this is a contradiction with divergence of $\widehat f$.
\hfill $\Box$

\begin{prop}\label{pomoc} Let $\alpha\in\mathbb{R}$, $m\in\mathbb{Z}$.
Let 
\[
a(x):=
\int_{d}^{x}t^{-\alpha}\boldsymbol{\ell}^{m}\, dt, 
\]
with  $d>0$ if $\alpha>1\text{ or }(\alpha=1,\ m\leq1)$,
and $d=0$ if  $\alpha<1\text{ or }(\alpha=1,\ m>1)$. The integral is divergent at $0$ in the first case and convergent in the second.
Then $a\in\mathcal G_{AN}$ and
\[
a(x)=\begin{cases}
O(x^{-\alpha+1}\boldsymbol{\ell}^{m}),\ x\to0+, & \alpha\neq1,\\
\frac{\boldsymbol{\ell}^{m-1}}{m-1}+C,\ C\in\mathbb{R}, & \alpha=1,\ m<1,\\
\boldsymbol{\ell}_{2}^{-1}+C,\ C\in\mathbb{R}, & \alpha=1,\ m=1,\\
\frac{\boldsymbol{\ell}^{m-1}}{1-m}, & \alpha=1,\ m>1.
\end{cases}
\]
\end{prop}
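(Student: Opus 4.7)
\medskip

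\noindent\textbf{Proof proposal.} The plan is to split into four cases according to the classification in the statement, verify analyticity in each case, and then compute the asymptotic behaviour either by an explicit substitution (when $\alpha=1$) or by repeated integration by parts (when $\alpha\neq 1$).

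First I would address analyticity. The integrand $t\mapsto t^{-\alpha}\boldsymbol{\ell}(t)^{m}$ is a composition of analytic functions on $(0,1)$, so for any choice of lower bound $d>0$, the function $a$ is trivially analytic on a small interval $(0,\delta)$ as a primitive of an analytic function. In the cases where $d=0$, I need the improper integral to converge at $0$: for $\alpha<1$, the integrand is $o(t^{-\alpha-\varepsilon})$ for every $\varepsilon>0$ (since $\boldsymbol{\ell}^{m}$ is poly-logarithmic), so picking $\varepsilon$ small enough that $-\alpha-\varepsilon>-1$ gives integrability; for $\alpha=1$ and $m>1$, the substitution $u=-\log t$ reduces the integral to $\int^{\infty}u^{-m}\,du$, which converges precisely because $m>1$. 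In each convergent case, the resulting $a$ extends to an analytic germ at $0$ by the standard argument that the indefinite integral of an analytic function is analytic, so $a\in\mathcal{G}_{AN}$.

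For the asymptotic behaviour in the case $\alpha\neq 1$, I would perform integration by parts with $u=\boldsymbol{\ell}^{m}$ and $dv=t^{-\alpha}\,dt$. Using $\boldsymbol{\ell}'(t)=\boldsymbol{\ell}(t)^{2}/t$, one computes $d(\boldsymbol{\ell}^{m})=m\boldsymbol{\ell}^{m+1}\,dt/t$, so that
\begin{equation*}
a(x)=\left[\frac{t^{1-\alpha}\boldsymbol{\ell}^{m}}{1-\alpha}\right]_{d}^{x}-\frac{m}{1-\alpha}\int_{d}^{x}t^{-\alpha}\boldsymbol{\ell}^{m+1}\,dt.
\end{equation*}
The remainder integral has the same form with $m$ replaced by $m+1$ (so strictly smaller in the lexicographic order), and iterating the procedure exhibits it as $o(x^{1-\alpha}\boldsymbol{\ell}^{m})$. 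The boundary contribution at $d$ is either a constant, which is negligible compared to $x^{1-\alpha}\boldsymbol{\ell}^{m}$ when $\alpha>1$, or else vanishes in the limit $t\to 0$ when $\alpha<1$ (since $t^{1-\alpha}\to 0$ dominates any power of $\boldsymbol{\ell}$). Either way one concludes $a(x)=O(x^{1-\alpha}\boldsymbol{\ell}^{m})$.

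For the remaining case $\alpha=1$, the substitution $u=-\log t$, $du=-dt/t$, makes the computation explicit: the integral becomes (up to a sign) $\int u^{-m}\,du$. If $m\neq 1$ this equals $u^{1-m}/(1-m)$, which is $(-\log x)^{1-m}/(1-m)=\boldsymbol{\ell}^{m-1}/(1-m)$ times a constant, plus a boundary term that either contributes a finite constant $C$ (when $d>0$, i.e.\ $m<1$) or vanishes (when $d=0$, i.e.\ $m>1$, since $1-m<0$ makes $u^{1-m}\to 0$ at $\infty$). If $m=1$, the primitive is $-\log u=-\log(-\log x)$, which up to a sign coincides with $\boldsymbol{\ell}_{2}^{-1}$; the $d>0$ choice forces the additive constant $C$. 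I expect the only mild obstacle to be bookkeeping of signs and constants of integration in distinguishing the four subcases, but this is purely a routine verification once the integration by parts and the substitution are in place.
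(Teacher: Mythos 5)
Since the paper's own proof is the single sentence ``The proof is based on elementary calculus,'' there is no detailed argument to compare against; your combination of integration by parts (for $\alpha\neq 1$) and the substitution $u=-\log t$ (for $\alpha=1$) is indeed the natural elementary-calculus route. Two remarks, though, one of which points to a genuine gap.

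For $\alpha\neq 1$, the claim that ``iterating the procedure exhibits the remainder as $o(x^{1-\alpha}\boldsymbol{\ell}^{m})$'' is circular as written: one integration by parts produces the integral $a_{m+1}(x)=\int_d^x t^{-\alpha}\boldsymbol{\ell}^{m+1}\,dt$, and to conclude that this is $o(x^{1-\alpha}\boldsymbol{\ell}^m)$ you need precisely the bound being proved, with $m+1$ in place of $m$; no finite number of iterations closes the loop. A one-step argument via l'Hopital's rule is cleaner: set $b(x)=x^{1-\alpha}\boldsymbol{\ell}^m(x)$, note that $a$ and $b$ both tend to $\pm\infty$ (if $\alpha>1$) or both to $0$ (if $\alpha<1$), and compute $a'(x)/b'(x)=\big[(1-\alpha)+m\boldsymbol{\ell}(x)\big]^{-1}\to(1-\alpha)^{-1}$, which gives $a\sim b/(1-\alpha)$ and hence the required $O$-estimate directly.

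Deferring the ``bookkeeping of signs'' is more dangerous than you suggest, because for $\alpha=1$ the statement asserts exact identities, not $O$-estimates, and the sign is the whole content. Carrying the sign from $dt/t=-du$ through the substitution gives $a(x)=\boldsymbol{\ell}^{m-1}/(m-1)+C$ for every $m\neq 1$, and $a(x)=-\boldsymbol{\ell}_2^{-1}+C$ for $m=1$. For $m>1$ and $m=1$ these disagree in sign with the formulas printed in the proposition. A sanity check confirms your formula and not the printed one: for $\alpha=1$, $m>1$, $d=0$, the integrand $t^{-1}\boldsymbol{\ell}^m$ is positive on $(0,1)$, so $a(x)=\int_0^x>0$, while the printed $\boldsymbol{\ell}^{m-1}/(1-m)$ is negative. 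So the statement as written seems to carry two sign typos, and precisely because you waved off the sign computation your draft would reproduce rather than detect them; tracking the $-du$ carefully is not ``routine bookkeeping'' here but the substance of the $\alpha=1$ cases.
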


The proof is based on elementary calculus.\\

The following proposition is an easy consequence of Proposition~\ref{pomoc}
and integration by parts: 

\begin{prop}\label{prop:rational_fct} \

\noindent
$(1)$ Let  $\widehat{R}\in\widehat{\mathcal{L}}_{0}^\infty$ and let $n_{0}:=\rm{ord}(\widehat{R})$, $n_{0}\in\mathbb{Z}$.
For $\alpha\in\mathbb{R}$ consider the transseries  $\widehat{F}$ and $\widehat{f}$ respectively defined by:
\begin{equation}\label{eq:as1}\widehat{F}\left(x\right)=\frac{\int x^{\alpha-1}\widehat{R}\left(\boldsymbol{\ell}(x)\right)dx}{x^{\alpha}}\text{ and }\widehat{f}\left(y\right)=\widehat{F}\left(e^{-1/y}\right),
\end{equation} where the numerator of $\widehat{F}\left(x\right)$ is the formal antiderivative in $\widehat{\mathfrak L}$ of $x^{\alpha-1}\widehat{R}\left(\boldsymbol{\ell}(x)\right)$ without constant term (as explained in the footnote on p. \pageref{ha}).
Then 
$\widehat{F}\in\widehat{\mathcal{L}}_{2}^{\infty}$ and $\widehat{f}\in\widehat{\mathcal{L}}_{1}^{\infty}$.
Moreover: 
\begin{enumerate}
\item[a)] if $\alpha\ne0$ then $\widehat{f}\in\widehat{\mathcal{L}}_{0}^{\infty}$ and
\[
\widehat{f}(y)=\sum_{n=0}^{\infty}a_{n}y^{n_{0}+n},\ a_{n}\in\mathbb{R};
\]
\item[b)]if $\alpha=0$ then
\[
\widehat{f}(y)=\begin{cases}
\sum_{n=0}^{-n_{0}}a_{n}y^{n_{0}-1+n}+b\log y+\sum_{n=0}^{\infty}b_{n}y^{n}, & n_{0}\leq1,\\
\sum_{n=0}^{\infty}a_{n}y^{n_{0}-1+n}, & n_{0}>1,\ a_n,b_n,b\in\mathbb R.
\end{cases}
\]

\end{enumerate}
\medskip

\noindent
$(2)$ Let $R\in \mathcal G_{AN}$
admit the integer power asymptotic expansion $\widehat{R}\in\widehat{\mathcal{L}}_{0}^\infty$. Let $f\in\mathcal G_{AN}$ be given by: 
\begin{equation}\label{eq:as}
f(y):=
\frac{\int_{d}^{e^{-1/y}}s^{\alpha-1}R\big(\boldsymbol\ell(s)\big)\, ds}{e^{-\frac{\alpha}{y}}},
\end{equation}
where $d>0$ if  $\alpha<0$ or $(\alpha=0$ and  $n_{0}\leq1)$, and $d=0$ if  $\alpha>0$ or $(\alpha=0$ and  $n_{0}>1)$.

Then $\widehat f\in\widehat{\mathcal L}_0^\infty$ defined in \eqref{eq:as1} is the power asymptotic expansion of $f\in\mathcal G_{AN}$ if $\alpha\neq 0$. If $\alpha=0$, $\widehat f$ is the Poincar{\'e} asymptotic expansion of $f$ up to a constant.
\end{prop}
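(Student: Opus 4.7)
The plan is to prove both parts by a direct computation based on iterated integration by parts, performed first formally and then on germs, and then to compare the two via careful error estimation using Proposition~\ref{pomoc}.

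For part (1), the starting point is the identity $\boldsymbol{\ell}'(x)=\boldsymbol{\ell}^{2}/x$, which yields $(\boldsymbol{\ell}^{n})'=n\boldsymbol{\ell}^{n+1}/x$ for every $n\in\mathbb{Z}$. Writing $\widehat{R}(\boldsymbol{\ell})=\sum_{n\geq n_{0}}r_{n}\boldsymbol{\ell}^{n}$, I compute the formal antiderivative in $\widehat{\mathfrak{L}}$ term by term. When $\alpha\neq 0$, formal integration by parts gives
$$\int x^{\alpha-1}\boldsymbol{\ell}^{n}\,dx\;=\;\frac{x^{\alpha}\boldsymbol{\ell}^{n}}{\alpha}\,-\,\frac{n}{\alpha}\int x^{\alpha-1}\boldsymbol{\ell}^{n+1}\,dx,$$
which iterates to the (generally divergent) formal series $x^{\alpha}\sum_{k\geq 0}(-1)^{k}\alpha^{-(k+1)}(n)_{k}\boldsymbol{\ell}^{n+k}$, where $(n)_{k}=n(n+1)\cdots(n+k-1)$. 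Summing over $n\geq n_{0}$, dividing by $x^{\alpha}$, and substituting $\boldsymbol{\ell}=y$ yields the power series form in case~(a), with leading term of order $y^{n_{0}}$. When $\alpha=0$, the recursion breaks down: for $n\neq 1$ direct integration gives $\int x^{-1}\boldsymbol{\ell}^{n}\,dx=\boldsymbol{\ell}^{n-1}/(n-1)$, while the exceptional term $\int x^{-1}\boldsymbol{\ell}\,dx=\log\boldsymbol{\ell}=-\boldsymbol{\ell}_{2}^{-1}$ is the unique source of a double logarithm, showing that $\widehat{F}\in\widehat{\mathcal{L}}_{2}^{\infty}$. After substituting $\boldsymbol{\ell}=y$, the $n=1$ contribution becomes the $\log y$ summand, while the terms with $n_{0}\leq n\leq 0$ produce the negative-power terms $\sum_{n=0}^{-n_{0}}a_{n}y^{n_{0}-1+n}$, and the $n\geq 2$ terms produce $\sum_{n\geq 1}b_{n}y^{n}$, matching the stated form of case~(b).

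For part (2), I argue that $\widehat{f}$ is the Poincar\'e expansion of $f$ by truncation and error estimation. For any $N\geq n_{0}$, write $R(\boldsymbol{\ell}(s))=P_{N}(\boldsymbol{\ell}(s))+\rho_{N}(s)$, where $P_{N}$ is the truncation of $\widehat{R}$ at order $N$ and $\rho_{N}(s)=o(\boldsymbol{\ell}(s)^{N})$ as $s\to 0^{+}$, since $\widehat{R}$ is the asymptotic expansion of $R$. Applying honest integration by parts finitely many times to each monomial $\int_{d}^{x}s^{\alpha-1}\boldsymbol{\ell}(s)^{n}\,ds$, as in Proposition~\ref{pomoc}, produces \emph{exactly} the truncation of $\widehat{F}(x)$ computed in part~(1), multiplied by $x^{\alpha}$, plus a boundary contribution $C_{n}$ coming from the lower limit $d$. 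The residual integral $\int_{d}^{x}s^{\alpha-1}\rho_{N}(\boldsymbol{\ell}(s))\,ds$ is controlled by applying Proposition~\ref{pomoc} to the single dominating monomial $s^{\alpha-1}\boldsymbol{\ell}^{N+1}$, yielding an $O(x^{\alpha}\boldsymbol{\ell}^{N+1})$ bound. Dividing by $x^{\alpha}=e^{-\alpha/y}$ and setting $x=e^{-1/y}$, the residual contributes $O(y^{N+1})$, which is $o(y^{N})$.

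Finally, I address the boundary constants. The total contribution of the $C_{n}$ to $f(y)$ equals $\big(\sum_{n}r_{n}C_{n}\big)e^{\alpha/y}$. When $\alpha\neq 0$, the case distinction on $d$ in the statement is precisely the one that makes every such $C_{n}$ either vanish ($d=0$, convergent case) or be multiplied by an exponentially small factor $e^{\alpha/y}$ that is flat with respect to every power of $y$, hence invisible in the Poincar\'e expansion. When $\alpha=0$ and $n_{0}\leq 1$, the boundary constant is a genuine additive constant, yielding the ``up to a constant'' ambiguity stated in the proposition. The main obstacle is precisely this bookkeeping: matching, for each level $N$, the coefficients produced by honest integration by parts with those of the iterated formal expansion in part~(1), and verifying that the choice of $d$ prescribed in the statement lines up with the convergence/divergence behaviour of $\int_{0}s^{\alpha-1}\boldsymbol{\ell}^{n}\,ds$.
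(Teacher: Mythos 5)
Your proof is correct and follows exactly the route the paper itself indicates (the paper states only that the proposition is ``an easy consequence of Proposition~\ref{pomoc} and integration by parts'' without giving details). Your iterated integration by parts, the use of $\boldsymbol{\ell}'=\boldsymbol{\ell}^{2}/x$, the special treatment of the monomial $x^{-1}\boldsymbol{\ell}$ producing $\boldsymbol{\ell}_{2}^{-1}$, and the error control of the tail via Proposition~\ref{pomoc} together with the case analysis on $d$ are precisely the intended computation.
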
 

\medskip
\subsection{Proofs of propositions from Section~\ref{sec:embedding}}

\begin{proof}[Proof of Proposition~\ref{def:formfatou}]
\noindent \emph{}

\emph{The chain rule.} As a prerequisite for the proof, we prove that
the chain rule is valid in our formal setting. That is, if $\widehat{\Psi}\in\widehat{\mathcal{L}}_{2}^{\infty}$ and $\{\widehat f^t\},\ \widehat f^t\in\widehat{\mathcal L}$ is a $C^1$-flow as defined in \cite[Def.1.2]{mrrz2}, then:
\[
\frac{\mathrm{d}}{\mathrm{d}t}\big(\widehat{\Psi}(\widehat{f^{t}}(x))\big)=\widehat{\Psi}'\big(\widehat{f}^{t}(x)\big)\cdot\frac{\mathrm{d}}{\mathrm{d}t}\widehat{f}^{t}(x)
\]
holds formally in $\widehat{\mathcal{L}}_{2}^{\infty}$. Here, $\frac{\mathrm{d}}{\mathrm{d}t}$
applied to a transseries means the derivation monomial by monomial. Since $\widehat{f}^{t}(x)=x+\mathrm{h.o.t.}$ with coefficients
in $C^{1}(\mathbb{R})$, it stems from \emph{Neuman's Lemma} (see
\cite{dries}) that the coefficients of $\widehat{\Psi}(\widehat{f^{t}}\left(x\right))$
also belong to $C^{1}\left(\mathbb{R}\right)$.

It is sufficient to prove the equality in $\widehat{\mathcal{L}}_{2}^{\infty}$ for a single monomial $m(x)$
from the support $\mathrm{Supp}(\widehat{\Psi})$: 
\[
\frac{{\displaystyle \mathrm{d}}}{\mathrm{d}t}(m(\widehat{f}^{t}(x)))=m'(\widehat{f}^{t}(x))\cdot\frac{\mathrm{d}}{\mathrm{d}t}\widehat{f}^{t}(x).
\]
Both sides
share a common well-ordered support. Take any monomial from this support. By \emph{Neumann's
lemma}, on both sides only finitely many monomials from $\widehat{f}^{t}$
contribute to it. Now the equality holds if we replace $\widehat{f}^{t}$
by the finite sum of its terms corresponding to these first monomials.
Therefore, the coefficients of every monomial on both sides coincide.
\medskip

\emph{The existence.} Take $\widehat{\Psi}$ to be the formal antiderivative in $\widehat{\mathfrak L}$ without constant term (as explained in the footnote on p. \pageref{ha})
of $1/\widehat{\xi}$, where 
\[
\widehat{\xi}:=\frac{\mathrm{d}}{\mathrm{d}t}\widehat{f}^{t}\Big|_{t=0}.
\]
We prove that $\widehat{\Psi}$ satisfies the equation \eqref{eq:rfatou}
for the formal Fatou coordinate. Integrating formally $1/\widehat{\xi}$ (every monomial is formally integrated by parts),
we conclude that $\widehat{\Psi}\in\widehat{\mathcal{L}}_{2}^{\infty}$.
Indeed, since $\widehat{\xi}\in\widehat{\mathcal{L}}$, we get that
$1/\widehat{\xi}\in\widehat{\mathcal{L}}^{\infty}$. In the integration
process, the double logarithm $\boldsymbol\ell_2^{-1}$ is generated when integrating the monomial $x^{-1}\boldsymbol\ell$.

Using $\frac{{\displaystyle \mathrm{d}}}{\displaystyle\mathrm{d}t}\widehat{f}^{t}=\widehat{\xi}(\widehat{f}^{t})$,
by the chain rule proved above, we get that 
\[
\frac{d}{dt}\big(\widehat{\Psi}(\widehat{f}_{t}(x))\big)=\widehat{\Psi}'(\widehat{f}^{t}(x))\cdot\frac{\mathrm{d}}{\mathrm{d}t}\widehat{f}^{t}(x)=\frac{1}{\widehat{\xi}(\widehat{f}^{t}(x))}\cdot\frac{\mathrm{d}}{\mathrm{d}t}\widehat{f}^{t}(x)=1.
\]
Integrating this equality from $0$ to $t$ gives the equality \eqref{fatou}.
In particular, $\widehat{\Psi}(\widehat{f}(x))-\widehat{\Psi}(x)=1$.
\medskip

\emph{The uniqueness.} Suppose that there exist two formal Fatou coordinates
$\widehat{\Psi}_{1},\ \widehat{\Psi}_{2}\in\widehat{\mathfrak{L}}$.
Let $\widehat{\Psi}:=\widehat{\Psi}_{1}-\widehat{\Psi}_{2}$. Then
$\widehat{\Psi}\in\widehat{\mathfrak{L}}$, that is, $\widehat{\Psi}\in\widehat{\mathcal{L}}_j^\infty$ for some $j\in\mathbb N_0,$ and it satisfies:
\[
\widehat{\Psi}(\widehat{f}(x))-\widehat{\Psi}(x)=0.
\]
Since $\widehat{f}\in\widehat{\mathcal{L}}$, by Taylor expansion
in $\widehat{\mathcal{L}}_{j}^{\infty}$, we get: 
\[
\widehat{\Psi}'\cdot\widehat{g}+\frac{1}{2!}\widehat{\Psi}''\cdot\widehat{g}^{2}+\cdots=0.
\]
If $\widehat{\Psi}'\neq0$ in $\widehat{\mathcal{L}}_{j}^{\infty}$,
since $\text{ord}(\widehat{g})\succ(1,0)$, the leading term of the
left-hand side is the non-zero leading term of $\widehat{\Psi}'\cdot\widehat{g}$,
which is a contradiction. Therefore, $\widehat{\Psi}'=0$, so $\widehat{\Psi}=C,\ C\in\mathbb{R}$ (which can be easily checked by analysing leading terms or looked up in \cite{dries}
).
\end{proof}

\begin{proof}[Proof of Proposition \ref{prop:exi}]
\noindent \emph{} We prove both directions. 

$1.$ Suppose that an analytic Fatou coordinate $\Psi$ for $f$ exists
on $(0,d)$. By definition it is strictly monotonic on $(0,d)$. Since $f(x)<x$, $\Psi$ is strictly decreasing on $(0,d)$. Therefore its image is the interval $(\Psi(d),+\infty)$. Then the
family $\{f^{t}\}$ of analytic functions on $(0,d)$, $d>0$, defined
by: 
\begin{equation}
f^{t}(x):=\Psi^{-1}\big(\Psi(x)+t\big),\label{eq:fat1}
\end{equation}
is well-defined and analytic for $t\in (\Psi(d)-\Psi(x), +\infty)$, $x\in (0,d)$. Note that, for every $x\in(0,d)$, due to monotonicity of $\Psi$, $\Psi(d)-\Psi(x)<0$, and $t$ can be extended to $+\infty$ in the positive direction. We conclude that $f^1$ exists for all $x\in(0,d)$ and $\{f^t\}$ gives
a $C^{1}$-flow in which $f$ embeds as the time-one map.

Furthermore, 
\[
\xi:=\frac{\mathrm{d}}{\mathrm{d}t}f^{t}\Big|_{t=0}=\frac{\mathrm{d}}{\mathrm{d}t}\Psi^{-1}\big(\Psi(x)+t\big)\Big|_{t=0}=\frac{1}{\Psi'}.
\]
Since $\Psi$ is a strictly monotonic on $(0,d)$, either $\Psi'>0$ or $\Psi'<0$
in $(0,d)$, so $\xi$ is non-oscillatory in $(0,d)$.
\medskip

$2.$ The vector field whose flow is given by the $C^{1}$-family $\{f^{t}\}$, $t\in\mathbb{R}$, of analytic functions on $(0,d)$, is
given by the formula: 
\[
X=\xi(x)\frac{\mathrm{d}}{\mathrm{d}x},\text{ where }\xi:=\frac{\mathrm{d}}{\mathrm{d}t}f^{t}\Big|_{t=0}.
\]
Obviously, $\xi$ is also analytic on $(0,d)$. Take $\Psi$ to be
the antiderivative of $1/\xi$. That is, $\Psi'=\frac{1}{\xi}.$ We
prove that $\Psi$ is a Fatou coordinate for $f$, that is, satisfies
\eqref{eq:fat0}. We solve the differential equation for the flow:
\begin{align*}
\dot{x} & =\xi(x)\\
\frac{\mathrm{d}x}{\xi(x)} & =\mathrm{d}t,\\
t & =\int_{x}^{f^{t}(x)}\frac{ds}{\xi(s)}.
\end{align*}
We get that 
\[
\Psi(f^{t}(x))-\Psi(x)=t,\ x\in (0,d),\ t\in(\mathbb{R}^+,0) \text{ (the exact interval depending on $x$)}.
\]
In particular, $\Psi(f(x))-\Psi(x)=1$ (note that by assumption $f$ embeds in $\{f^t\}$ on $(0,d)$ as time-$1$ map, so $f$ is defined for every $x\in(0,d)$.

Moreover, since $\Psi'=\frac{1}{\xi}$, and $\xi$ does not change
sign in some interval $(0,d)$, $\Psi$ is strictly monotonic in the
same interval.
\end{proof}
\begin{obs}[The importance of \emph{non-oscillatority} in Proposition~\ref{prop:exi}]\label{rem:oscil}Consider
the flow $\left\{ f^{t}\right\} _{t}$ of an analytic vector field
$X=\xi\frac{\mathrm d}{\mathrm{dx}}$ on $\left(0,d\right)$. Take a
non-singular point $x_{0}>0$ of the vector field ($\xi(x_0)\neq 0$). Then, by \eqref{eq:fat2}, the Fatou coordinate
$\Psi_{x_{0}}$ is defined at a point $x$ as the time $t\in\mathbb{R}$
such that $f^{t}\left(x_{0}\right)=x$. In particular, $\Psi_{x_{0}}\left(x_{0}\right)=0$.
Obviously, $\Psi_{x_{0}}$ cannot be defined at any singular (equilibrium) point
of vector field $\xi$. 

For example, the flow $\left\{ f^{t}\right\} _{t\in\mathbb{R}}$ of
the analytic vector field $X=x^{2}\sin(1/x)\frac{\mathrm{d}}{\mathrm{d}x}$
on $(0,d)$ consists of analytic maps on $(0,d)$. But, as
the vector field $X$ admits infinitely many singular points which
accumulate at the origin, we cannot define a Fatou coordinate on any
interval $\left(0,d_{1}\right)$. \end{obs}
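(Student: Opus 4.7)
The plan is to verify the two assertions in Remark~\ref{rem:oscil}: (i) the flow $\{f^t\}_{t\in\mathbb{R}}$ of $X=x^2\sin(1/x)\,\tfrac{\mathrm{d}}{\mathrm{d}x}$ consists of analytic self-maps of $(0,d)$ for suitable $d>0$, and (ii) no Fatou coordinate for the time-one map $f:=f^1$ exists on any subinterval $(0,d_1)$, $d_1>0$.

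The decisive step is (ii). The zero set of $\xi(x)=x^2\sin(1/x)$ on $(0,d)$ equals $\{x_k:=1/(k\pi)\,:\,k\in\mathbb{N},\ 1/(k\pi)<d\}$, and each $x_k$ is an equilibrium of $\dot x=\xi(x)$. Hence $f^t(x_k)=x_k$ for every $t\in\mathbb{R}$; in particular $f(x_k)=x_k$. If a Fatou coordinate $\Psi$ existed on $(0,d_1)$, then for any $x_k\in(0,d_1)$ the Abel equation $\Psi(f(x))-\Psi(x)=1$ would force $0=1$. Since $x_k\to 0$, every interval $(0,d_1)$ contains such $x_k$, ruling out the existence of $\Psi$.

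For (i), one checks that on each open component $I_k:=(1/((k+1)\pi),\,1/(k\pi))$ of the complement of the zero set, $\xi$ is real-analytic and of constant nonzero sign, so the autonomous flow is globally defined in $t$, real-analytic in $x$ on each $I_k$, and remains inside $I_k$; meanwhile $f^t(x_k)=x_k$ for all $t$. Analytic dependence of solutions on initial data for the real-analytic vector field $X$, applied in a neighborhood of each isolated equilibrium $x_k$, extends $f^t$ real-analytically across each $x_k$, producing an analytic self-map of $(0,d)$ for every $t\in\mathbb{R}$. The mild technical work lies in this gluing across the accumulating equilibria; the core content of the remark, however, is the one-line contradiction of (ii), which shows that the non-oscillatority hypothesis of Proposition~\ref{prop:exi} cannot be dropped even when the flow itself is perfectly analytic.
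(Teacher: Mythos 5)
Your argument is correct and formalizes the same idea the paper sketches informally: the zeros $x_k=1/(k\pi)$ of $\xi$ are fixed points of every $f^t$, so the Abel relation $\Psi(f(x_k))-\Psi(x_k)=1$ (or more generally $\Psi(f^t(x_k))-\Psi(x_k)=t$ from \eqref{eq:fat2}) cannot hold, and since the $x_k$ accumulate at $0$ no interval $(0,d_1)$ avoids them. The observations you make under (i) about analyticity of the flow across the accumulating equilibria are a correct, if slightly more detailed, version of the paper's assertion that the flow consists of analytic maps on $(0,d)$.
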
 
\medskip

\textbf{Acknowledgements.} \emph{The authors would like to thank the referee for very useful suggestions and comments.}\medskip

\emph{Address:}$\quad$ $^{1}$ and $^{3}$ : Universit\'e de Bourgogne,
D\'epartment de Math\'ematiques, Institut de Math\'ematiques de
Bourgogne, B.P. 47 870, 21078 Dijon Cedex, France 

$^{2}$ : University of Zagreb, Faculty of Science, Department of Mathematics, Bijeni\v cka 30, 10000 Zagreb, Croatia

$^{4}$ : University of Zagreb, Faculty of Electrical Engineering and Computing, Department of Applied
Mathematics, Unska
3, 10000 Zagreb, Croatia
\end{document}